\title{\normalsize{\textbf{INNER AND PARTIAL NON-DEGENERACY OF MIXED FUNCTIONS}}}
\author{Benjamin Bode \and Eder L. Sanchez Quiceno}
\newcommand{\Addresses}{{
  \bigskip
  \footnotesize

B.~Bode, \textsc{Instituto de Ciencias Matemáticas (ICMAT), Consejo Superior de Investigaciones Científicas (CSIC), Campus Cantoblanco UAM,\\
C/ Nicolás Cabrera, 13-15, 28049 Madrid, Spain}\par\nopagebreak
  \textit{E-mail address}: \texttt{benjamin.bode@icmat.es}

  \medskip

E.~L.~Sanchez Quiceno, \textsc{Institute of Mathematics and Computer Science (ICMC)\\
University of São Paulo (USP), Avenida Trabalhador São-Carlense, 400 - Centro\\
CEP: 13566-590 - São Carlos - SP, Brazil}\par\nopagebreak
  \textit{E-mail address}: \texttt{ederleansanchez@usp.br}

}}
\date{}
        \titleformat{\subsection} [block] {\large \mdseries}
        {\thesubsection} {1ex} {}
        [
        ] 
\newtheorem{teo}{Theorem}[section]
\newtheorem{corolario}[teo]{Corollary}
\newtheorem{prop}[teo]{Proposition}
\newtheorem{lemma}[teo]{Lemma}
\newtheorem{definition}[teo]{Definition}
\newtheorem{ex}[teo]{Example}
\newtheorem{obs}[teo]{Remark}
\DeclareMathOperator{\re}{Re}
\DeclareMathOperator{\im}{Im}
\newcommand{\C}{\mathbb{C}}       
\newcommand{\R}{\mathbb{R}}       
\newcommand{\N}{\mathbb{N}}       
\newcommand{\rme}{\mathrm{e}}
\newcommand{\rmi}{\mathrm{i}}
\newcommand{\rmd}{\mathrm{d}}
\newcommand{\defeq}{\mathrel{\mathop:}=}
\begin{document}
\maketitle
\begin{abstract}
Mixed polynomials $f:\C^2\to\C$ are polynomial maps in complex variables $u$ and $v$ as well as their complex conjugates $\bar{u}$ and $\bar{v}$. They are therefore identical to the set of real polynomial maps from $\mathbb{R}^4$ to $\mathbb{R}^2$. We generalize Mondal's notion of partial non-degeneracy from holomorphic polynomials to mixed polynomials, introducing the concepts of partially non-degenerate and strongly partially non-degenerate mixed functions. We prove that partial non-degeneracy implies the existence of a weakly isolated singularity, while strong partial non-degeneracy implies an isolated singularity. We also compare (strong) partial non-degeneracy with other types of non-degeneracy of mixed functions, such as (strong) inner non-degeneracy, and find that, in contrast to the holomorphic setting, the different properties are not equivalent for mixed polynomials. We then introduce additional conditions under which strong partial non-degeneracy becomes equivalent to the existence of an isolated singularity. Furthermore, we prove that mixed polynomials that are strongly inner non-degenerate satisfy the strong Milnor condition, resulting in an explicit Milnor (sphere) fibration.

    \vspace{0.2cm} 

\noindent \textit{Keywords:} mixed function, isolated singularity, Newton non-degenerate, Milnor fibration.  
   
     \vspace{0.2cm} 
     
\noindent \textit{Mathematics Subject Classification:} 
Primary 14B05; Secondary 14J17, 14M25, 14P05, 32S05, 32S55. 	
   
\end{abstract}

\section{INTRODUCTION}

Singularities of holomorphic polynomials are quite well studied. In particular, there are several non-degeneracy conditions that imply isolatedness of singularities and that lead to invariants associated with the Newton boundary that allow insights into topological properties of the singularity. In this paper we study real polynomial maps or, equivalently in the dimensions that we consider, mixed polynomials. We introduce the notion of a partially Newton non-degenerate mixed function, generalizing the corresponding definition from the complex setting, and compare it with other non-degeneracy conditions of mixed functions, in particular with those of Oka \cite{Oka2010} and our previous work with Araújo dos Santos \cite{AraujoBodeSanchez}.  

Complex polynomials that satisfy certain non-degeneracy conditions are known to be accessible to a topological analysis. For a special class of polynomials defined by Kouchnirenko \cite{Kouchnirenko1976} for example, which are called non-degenerate with respect to their Newton boundary (or Kouchnirenko non-degenerate), the topological type of the singularity, i.e., the link type of the singularity, is determined by the terms on the Newton boundary, that is, its \textit{Newton principal part} \cite{king1978, FukuiYoshinaga1985}. Moreover, several singularity invariants are determined by combinatorial aspects of the Newton boundary, for instance, the formula of the Milnor number for a convenient, non-degenerate holomorphic polynomial presented by Kouchnirenko \cite{Kouchnirenko1976}. Wall \cite{wall} defined inner non-degenerate polynomials, generalizing Kouchnirenko’s formula to include weighted homogeneous holomorphic polynomials with isolated singularity. Later, Mondal generalized Wall’s non-degeneracy, defining the concept of a partially non-degenerate polynomial \cite{mondal}. These two notions are known to be different in a field of positive characteristic, but are conjectured to be equivalent in fields of characteristic zero. The conjecture was proved for polynomials in 2 or 3 variables by Mondal. 

A similar development has occurred over the last years in the study of real polynomial mappings. We consider polynomial mappings $f:\R^4 \to \R^2$, with $f(O)=0$, where $O$ denotes the origin in $\R^4$. These are the natural real analogues of holomorphic polynomials $\C^2 \to \C$. In order to take better advantage of this analogy we may write $f$ as a mixed polynomial, a complex-valued function in two complex variables and their conjugates. The only reason for restricting our attention to mixed polynomials of these dimensions is that our original motivation was the study of the (classical 1-dimensional) links of singularities, see also \cite{AraujoBodeSanchez}. We expect that our definitions and results have analogues for any mixed polynomial $f:\C^k\to \C$. The set of such mixed polynomials is of course identical to the set of real polynomial mappings of the appropriate dimensions.

The singular set $\Sigma_f$ for a mixed polynomial, which is the set of points where the real Jacobian matrix does not have maximal rank, can be equivalently defined as the set of solutions to a system of equations of mixed polynomials, to be described in more detail in the later sections. We say that the origin $O\in\mathbb{R}^4$ is a weakly isolated singularity if it is a singularity and there is an open neighbourhood $U \subset \R^4$ of the origin such that $U \cap V_f \cap \Sigma_f = \{O\}$, where $V_f$ is the variety defined by the equation $f = 0$. We say that $O$ is an isolated singularity if it is a singularity and there is a neighbourhood $U$ of the origin such that $U\cap \Sigma_f =\{O\}$. The difference between weakly isolated singularities and isolated singularities marks an important departure from the complex case, where the two notions are equivalent.

The concepts of a Newton polygon and Newton boundary of holomorphic polynomials have an analogous definition for mixed functions as introduced by Oka \cite{Oka2010}. Following this analogy with the holomorphic setting, different non-degeneracy conditions have been put forward. Oka introduced the notion of Newton non-degeneracy and strong Newton non-degeneracy, which imply the existence of a weakly isolated singularity and an isolated singularity, respectively, if another property (``convenience'') of the Newton boundary is assumed.

We defined the concept of inner non-degenerate and strongly inner non-degenerate mixed polynomials in \cite{AraujoBodeSanchez}. Again these non-degeneracy conditions only depend on the terms on the Newton boundary. It was shown that inner non-degeneracy and strongly inner non-degeneracy imply a weakly isolated singularity and an isolated singularity, respectively. Furthermore, Oka's convenient and (strongly) non-degenerate mixed functions were found to be special cases of our (strongly) inner non-degenerate functions. 

The topological type of an inner non-degenerate polynomial with an additional ``nice'' property is determined by the terms on the mixed Newton boundary. An assumption of an isolated singularity is not necessary to prove this equivalence, as opposed to the results in \cite{oka2018}. Therefore, these classes of mixed polynomials are interesting in that they possess certain properties that are already established for holomorphic polynomials. Of particular interest in this context is the family of semiholomorphic polynomials that was introduced in \cite{bode_lemniscate} and studied in detail in \cite{AraujoBodeSanchez}. This family consists of those mixed polynomials that are holomorphic with respect to one of the two complex variables.

In this paper, we define partial Newton non-degeneracy and strong partial Newton non-degeneracy (Definition~\ref{Newtoncond} and Definition~\ref{Newtoncondstrong}), which are mixed versions of the partial non-degeneracy introduced by Mondal. As with other types of non-degeneracy we often drop ``Newton'' and simply say that a polynomial is partially non-degenerate. We prove that partial Newton non-degeneracy~(PND) and strong partial Newton non-degeneracy~(SPND) generalize previous conditions that imply weakly isolated singularity and isolated singularity, that are, convenience and non-degeneracy~(ND), and convenience and strong non-degeneracy~(SND) defined in \cite{Oka2010} and inner non-degeneracy~(IND) and strong inner non-degeneracy (SIND) defined in \cite{AraujoBodeSanchez}.

\begin{teo}\label{lem:innerpar}
Let $f$ be a (strongly) inner non-degenerate mixed polynomial. Then $f$ is (strongly) partially non-degenerate.
\end{teo}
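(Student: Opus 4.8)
The plan is to unwind the definitions of inner non-degeneracy and partial non-degeneracy and show directly that the defining data for the former provides valid certificates for the latter. Recall that both conditions are phrased in terms of the faces of the mixed Newton boundary $\Gamma(f)$ and the associated face functions $f_\Delta$; the difference is that inner non-degeneracy asks for a condition to hold on every face (or every face meeting the interior of the positive quadrant, in the ``inner'' formulation), whereas partial non-degeneracy is an existential statement: it is enough to produce \emph{one} suitable weight vector, or a nested sequence of faces, exhibiting that the radial part of $f$ has no critical points with nonzero value on the corresponding torus. So the strategy is: (i) take an arbitrary weight vector $w$ with positive entries, or an arbitrary interior point of the Newton boundary; (ii) show that the inner non-degeneracy hypothesis applied to the face $\Delta(w)$ selected by $w$ gives exactly the non-vanishing of the relevant (mixed) gradient or the relevant system of equations that partial non-degeneracy requires; (iii) conclude that $f$ satisfies the PND (resp. SPND) condition.

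The key steps, in order, are as follows. First I would recall precisely, from \cite{AraujoBodeSanchez}, the statement that $f$ inner non-degenerate means: for every weight $w\in\mathbb{R}_{>0}^2$ (equivalently every compact face $\Delta$ of $\Gamma(f)$), the face function $f_\Delta$ has no critical point as a mixed function on $(\mathbb{C}^*)^2$, i.e. the system given by the vanishing of $\overline{\partial_u f_\Delta}\,\partial_u f_\Delta$-type Hamiltonian conditions (the mixed singular-set equations restricted to the torus) has no solution with $f_\Delta\neq 0$; and strongly inner non-degenerate adds that there is no solution of the singular-set equations at all on the torus, regardless of the value of $f_\Delta$. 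Second, I would state the mixed analogue of Mondal's definition (Definition~\ref{Newtoncond}): $f$ is partially non-degenerate if for a suitable collection of weights / faces arranged in a flag, certain truncations are non-degenerate in the same torus sense. Third — and this is the crux — I would observe that the ``for all faces'' quantifier in IND is logically stronger than the ``there exists a flag of faces'' quantifier in PND, so that the inner non-degenerate data can simply be restricted to the faces appearing in any such flag, and the torus non-degeneracy needed at each stage is literally a special case of the IND condition at that face. The strong case is handled identically, replacing ``non-degenerate on $\{f_\Delta\neq 0\}$'' by ``non-degenerate on all of $(\mathbb{C}^*)^2$''.

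The main obstacle I anticipate is bookkeeping about exactly which faces the partial non-degeneracy condition ranges over and whether the mixed setting introduces faces that are not ``visible'' from positive weight vectors (for instance, non-compact faces, or faces on the coordinate hyperplanes when $f$ is not convenient). In Mondal's holomorphic definition one works with a carefully chosen sequence of successive truncations (a ``partial Newton filtration''), and I would need to check that the face selected at each step of this filtration is one to which the IND hypothesis applies — i.e. that it is a compact face meeting $\mathbb{R}_{>0}^2$, so that ``inner'' non-degeneracy says something about it. If the filtration could require a face on the boundary of the quadrant, one would need the extra input that inner non-degeneracy, by its very design in \cite{AraujoBodeSanchez}, was set up to handle precisely the non-convenient situation; I would cite the relevant lemma there that controls those boundary faces. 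Once that compatibility of the face posets is pinned down, the implication is essentially formal, since at each stage the required non-degeneracy of the truncated face function is a verbatim instance of the inner non-degeneracy assumption.

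Finally, I would write the proof as: let $f$ be (strongly) inner non-degenerate; given any chain of faces $\Delta_1 \supset \Delta_2 \supset \cdots$ as in Definition~\ref{Newtoncond} (resp. Definition~\ref{Newtoncondstrong}), each $\Delta_i$ is a face of $\Gamma(f)$ meeting the open positive quadrant, hence by the definition of (strong) inner non-degeneracy the face function $f_{\Delta_i}$ has no critical points on $(\mathbb{C}^*)^2$ with $f_{\Delta_i}\neq 0$ (resp. no critical points at all); this is exactly the condition demanded of the $i$-th truncation in the definition of (strong) partial non-degeneracy, so $f$ is (strongly) partially non-degenerate. $\qquad\square$
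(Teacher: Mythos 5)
Your proposal rests on the claim that the implication is ``essentially formal'' because the IND hypothesis, applied to a face $\Delta$, is ``literally'' the condition PND needs at that face. This is not correct, and the gap is precisely the point the paper flags before its proof: the two notions refer to \emph{different} objects. Partial non-degeneracy (Definitions~\ref{Newtoncond} and~\ref{Newtoncondstrong}) is stated in terms of the $P$-parts $(s_{1,f})_P$, $(s_{2,f})_P$, $(s_{3,f})_P$ of the singular-set equations of $f$ itself, whereas inner non-degeneracy is stated in terms of $s_{1,f_P}$, $s_{2,f_P}$, $s_{3,f_P}$, the singular-set equations of the face function $f_P$. Because taking a partial derivative and taking the $P$-part do not commute for mixed polynomials — $(f_u)_P\neq (f_P)_u$ in general, as \cite[Lemma~3.5]{AraujoBodeSanchez} makes precise — the system $(s_{i,f})_P=0$ is \emph{not} the system $s_{i,f_P}=0$, and a common zero of one is not automatically a common zero of the other. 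Your sketch simply identifies them, which is exactly the step that fails. (You also misread PND as an existential condition over a flag of faces; Definition~\ref{Newtoncond} is a universal statement over all positive weight vectors. For the direction you are proving this only makes the conclusion stronger, so it is not logically fatal, but it does indicate the definitions were not internalized.)

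The actual work is to bridge this gap. The paper argues by contradiction: a common root $(a,b)\in(\C^*)^2$ of $(s_{i,f})_P$, $i=1,2,3$, is assumed, and then \cite[Lemma~3.5]{AraujoBodeSanchez} is used to express each $(s_{i,f})_P$ in terms of $(f_u)_P,(f_{\bar{u}})_P,(f_v)_P,(f_{\bar{v}})_P$, splitting into nine cases according to the semiholomorphic type of $f_P$ (whether $f_P$ depends on $u$ only, $\bar{u}$ only, or both, and likewise for $v$, $\bar{v}$), with a further subcase on whether $s_{3,f_P}\equiv 0$. Only after matching up which of these $P$-parts actually equal the derivatives of $f_P$ can one conclude that $(a,b)$ is a critical point of $f_P$, contradicting SIND-(ii). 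The boundary/extreme-vertex cases $p_1/p_2>k_1$ or $p_1/p_2<k_N$ — which you correctly sensed would be delicate but deferred — require a separate argument: the paper writes $f_{P_1}=g+Bu+C\bar{u}+A$ and translates SIND-(i) into an explicit system involving $A,B,C$ and their derivatives, distinguishing the $v$-convenient and non-$v$-convenient cases, to obtain the contradiction there. None of this is visible from a quantifier-restriction argument, so the proposal as written has a genuine missing idea, not merely missing bookkeeping.
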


\begin{teo}\label{strong-isolated3}
Let $f$ be a partially non-degenerate mixed polynomial. Then $f$ has a weakly isolated singularity at the origin. If $f$ is strongly partially non-degenerate, then it has an isolated singularity at the origin.  
\end{teo}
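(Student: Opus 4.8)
The plan is to argue by contradiction using the curve selection lemma, following the scheme used in \cite{AraujoBodeSanchez} for (strong) inner non-degeneracy and adapting it to the weaker hypotheses considered here.

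First I would treat the weakly isolated case. Assume $O$ is a singular point of $f$ but not a weakly isolated one, so that $(V_f\cap\Sigma_f)\setminus\{O\}$ accumulates at $O$. Since $\Sigma_f$ is the common zero set of the $2\times2$ minors of the real Jacobian of $f$, the set $V_f\cap\Sigma_f$ is real algebraic, and the curve selection lemma yields a real analytic curve $\gamma:[0,\varepsilon)\to\C^2$ with $\gamma(0)=O$ and $\gamma(t)\in(V_f\cap\Sigma_f)\setminus\{O\}$ for $t\in(0,\varepsilon)$. Writing $\gamma(t)=(u(t),v(t))$ with $u(t)=a\,t^{p_1}+\cdots$ and $v(t)=b\,t^{p_2}+\cdots$ (with the convention $p_i=\infty$ and the corresponding coefficient $0$ when a coordinate vanishes identically, and not both $p_i$ equal to $\infty$, since $\gamma\not\equiv O$), one obtains a weight vector $P=(p_1,p_2)$. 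Substituting $\gamma$ into $f$ gives $f(\gamma(t))=f_P(a,b)\,t^{d(P)}+\cdots$, so $f\circ\gamma\equiv0$ forces $f_P(a,b)=0$; substituting $\gamma$ into the mixed equations defining $\Sigma_f$ and extracting the lowest-order terms — using the Euler-type identity satisfied by the radially weighted homogeneous function $f_P$ together with its holomorphic and conjugate derivatives, exactly as in \cite{Oka2010,AraujoBodeSanchez} — shows that $(a,b)$ is a mixed singular point of $f_P$. Hence $f_P$ has a mixed singular point on $V_{f_P}$, lying in $(\C^*)^2$ if both $p_i$ are finite and on a coordinate axis otherwise.

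The remaining, and genuinely new, step is to contradict partial non-degeneracy. Unlike inner non-degeneracy, Definition~\ref{Newtoncond} does not forbid mixed singular points on $V_{f_Q}$ for every weight $Q$, but only for $Q$ ranging over a distinguished (and in general proper) set constructed iteratively from the Newton boundary in the spirit of Mondal's definition \cite{mondal}; the weight $P$ produced above need not belong to it. The crux is therefore a combinatorial sufficiency statement: whenever a face function $f_P$ has a mixed singular point on $V_{f_P}\cap(\C^*)^2$, some face function $f_Q$ with $Q$ in the distinguished set does so as well. I would prove this by following the iterative construction underlying Definition~\ref{Newtoncond}, at each stage either recognising $P$ among the distinguished weights or replacing $\gamma$ by a curve adapted to the next face in the construction and repeating; termination of the construction then forces the conclusion, contradicting partial non-degeneracy and showing $O$ is weakly isolated. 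The subcase in which $P$ has an $\infty$ entry, i.e.\ $\gamma$ lies in a coordinate hyperplane, is handled by restricting $f$ to that hyperplane, where it becomes a one-variable mixed function, and invoking the clause of the definition that plays the role of Oka's convenience. I expect this sufficiency statement (and, to a lesser extent, the hyperplane subcase) to be the main obstacle; the rest is a routine adaptation of arguments already in the literature.

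For the isolated case, assume $f$ is strongly partially non-degenerate but $\Sigma_f$ does not reduce to $\{O\}$ near the origin. Running the same curve selection argument on $\Sigma_f$ alone, that is, dropping the constraint $\gamma\subset V_f$, produces a real analytic curve in $\Sigma_f\setminus\{O\}$ through $O$, a weight $P$, and leading coefficients $(a,b)$ that again form a mixed singular point of $f_P$, but now with no control on the value $f_P(a,b)$. Thus $f_P$ has a mixed singular point in $(\C^*)^2$ (or on a coordinate axis), and the "strong" form of the sufficiency statement — for the larger distinguished set of weights in Definition~\ref{Newtoncondstrong} — yields a weight $Q$ in that set with $f_Q$ having a mixed singular point in $(\C^*)^2$, contradicting strong partial non-degeneracy. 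Hence $O$ is an isolated singularity.
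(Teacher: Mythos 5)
There is a genuine gap, and it stems from a misreading of Definitions~\ref{Newtoncond} and \ref{Newtoncondstrong}. You assume that (strong) partial non-degeneracy, like Mondal's original notion in the holomorphic setting, is phrased in terms of a distinguished (and, as you say, in general proper) subset of weight vectors constructed iteratively from the Newton boundary, and you identify the resulting ``combinatorial sufficiency statement'' as the crux and main obstacle of the proof. But the paper's definitions are explicitly stated to hold \emph{for every positive weight vector $P$}. Consequently the weight $P$ read off from the curve $\gamma$ always belongs to the relevant set, the iterative reduction you propose is vacuous, and the main obstacle you anticipate simply does not arise.

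There is a second, related misstep. After expanding the equations cutting out $\Sigma_f$ along $\gamma$, you aim to conclude (via an Euler-type identity) that $(a,b)$ is a mixed singular point of the face function $f_P$, i.e.\ a common zero of $s_{j,f_P}$. But the conditions in Definitions~\ref{Newtoncond} and \ref{Newtoncondstrong} are phrased in terms of $(s_{j,f})_P$, the $P$-parts of the mixed polynomials $s_{j,f}$ that cut out $\Sigma_f$, \emph{not} in terms of $s_{j,f_P}$, the corresponding polynomials for $f_P$. As the paper stresses, taking $P$-parts and taking derivatives do not commute, so $(s_{j,f})_P$ and $s_{j,f_P}$ are in general different objects; reconciling them is exactly the content of the nontrivial case analysis in the proof of Theorem~\ref{lem:innerpar}, not of Theorem~\ref{strong-isolated3}. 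The correct and much shorter observation is that the lowest-order coefficient of $s_{j,f}(z(\tau))$ is $(s_{j,f})_P(a,b)$ for each $j$ (and of $f(z(\tau))$ is $f_P(a,b)$ in the weak case), so the vanishing of these coefficients immediately produces a common solution of the system in condition (ii) (or, in the coordinate-hyperplane subcase, of condition (i)) in $(\C^*)^2$, $(a,0)$, or $(0,b)$, directly contradicting (S)PND. Your proposed detour through $\Sigma_{f_P}$ and a combinatorial reduction would therefore both fail to match the definition and introduce work that the correct argument avoids entirely.
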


We can see the relations between these different types of non-degeneracy and the existence of isolated singularities in the following diagrams.

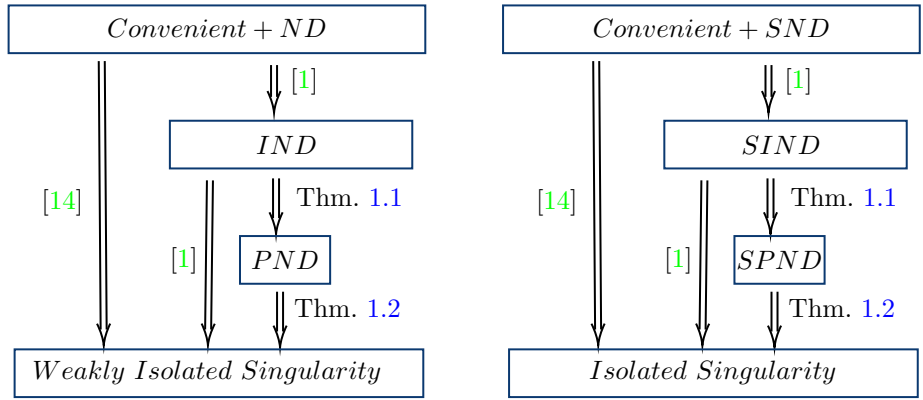
\begin{figure}[H]
\begin{subfigure}[a]{.4\textwidth}
\tikzset{every picture/.style={line width=0.75pt}} 

\begin{tikzpicture}[x=0.75pt,y=0.75pt,yscale=-1,xscale=1]

\draw [line width=0.75]    (135,36) -- (135,52)(132,36) -- (132,52) ;
\draw [shift={(133.5,60)}, rotate = 270] [color={rgb, 255:red, 0; green, 0; blue, 0 }  ][line width=0.75]    (10.93,-3.29) .. controls (6.95,-1.4) and (3.31,-0.3) .. (0,0) .. controls (3.31,0.3) and (6.95,1.4) .. (10.93,3.29)   ;
\draw [line width=0.75]    (49,33.99) -- (49.94,168.99)(46,34.01) -- (46.94,169.01) ;
\draw [shift={(48.5,177)}, rotate = 269.6] [color={rgb, 255:red, 0; green, 0; blue, 0 }  ][line width=0.75]    (10.93,-3.29) .. controls (6.95,-1.4) and (3.31,-0.3) .. (0,0) .. controls (3.31,0.3) and (6.95,1.4) .. (10.93,3.29)   ;
\draw [line width=0.75]    (103,94.02) -- (102.1,169.02)(100,93.98) -- (99.1,168.98) ;
\draw [shift={(100.5,177)}, rotate = 270.69] [color={rgb, 255:red, 0; green, 0; blue, 0 }  ][line width=0.75]    (10.93,-3.29) .. controls (6.95,-1.4) and (3.31,-0.3) .. (0,0) .. controls (3.31,0.3) and (6.95,1.4) .. (10.93,3.29)   ;
\draw [line width=0.75]    (136,93) -- (136,113)(133,93) -- (133,113) ;
\draw [shift={(134.5,121)}, rotate = 270] [color={rgb, 255:red, 0; green, 0; blue, 0 }  ][line width=0.75]    (10.93,-3.29) .. controls (6.95,-1.4) and (3.31,-0.3) .. (0,0) .. controls (3.31,0.3) and (6.95,1.4) .. (10.93,3.29)   ;
\draw [line width=0.75]    (138,150) -- (138,170)(135,150) -- (135,170) ;
\draw [shift={(136.5,178)}, rotate = 270] [color={rgb, 255:red, 0; green, 0; blue, 0 }  ][line width=0.75]    (10.93,-3.29) .. controls (6.95,-1.4) and (3.31,-0.3) .. (0,0) .. controls (3.31,0.3) and (6.95,1.4) .. (10.93,3.29)   ;

\draw  [color={rgb, 255:red, 11; green, 57; blue, 112 }  ,draw opacity=1 ]  (1,6) -- (209,6) -- (209,30) -- (1,30) -- cycle  ;
\draw (105,18) node    {$\ \ \ \ \ \ \ \ \ Convenient+ND\ \ \ \ \ \ \ \ \ $};
\draw  [color={rgb, 255:red, 11; green, 57; blue, 112 }  ,draw opacity=1 ]  (81.5,64) -- (202.5,64) -- (202.5,88) -- (81.5,88) -- cycle  ;
\draw (142,76) node    {$\ \ \ \ \ \ \ \ \ IND\ \ \ \ \ \ \ \ \ \ $};
\draw  [color={rgb, 255:red, 11; green, 57; blue, 112 }  ,draw opacity=1 ]  (4,179) -- (208,179) -- (208,203) -- (4,203) -- cycle  ;
\draw (7,183.4) node [anchor=north west][inner sep=0.75pt]    {$\ Weakly\ Isolated\ Singularity$};
\draw (16,97) node [anchor=north west][inner sep=0.75pt]   [align=left] {\cite{Oka2010}};
\draw  [color={rgb, 255:red, 11; green, 57; blue, 112 }  ,draw opacity=1 ]  (116.5,122) -- (161.5,122) -- (161.5,146) -- (116.5,146) -- cycle  ;
\draw (139,134) node    {$PND$};
\draw (143,96) node [anchor=north west][inner sep=0.75pt]   [align=left] {Thm.~\ref{lem:innerpar}};
\draw (140,36) node [anchor=north west][inner sep=0.75pt]   [align=left] {\cite{AraujoBodeSanchez}};
\draw (142,152) node [anchor=north west][inner sep=0.75pt]   [align=left] {Thm.~\ref{strong-isolated3}};
\draw (80,126) node [anchor=north west][inner sep=0.75pt]   [align=left] {\cite{AraujoBodeSanchez}};
\end{tikzpicture}

\label{boundaryfu}
\end{subfigure}
\hfil \hfil
\begin{subfigure}[a]{.4\textwidth}
\tikzset{every picture/.style={line width=0.75pt}} 

\begin{tikzpicture}[x=0.75pt,y=0.75pt,yscale=-1,xscale=1]

\draw [line width=0.75]    (135,36) -- (135,52)(132,36) -- (132,52) ;
\draw [shift={(133.5,60)}, rotate = 270] [color={rgb, 255:red, 0; green, 0; blue, 0 }  ][line width=0.75]    (10.93,-3.29) .. controls (6.95,-1.4) and (3.31,-0.3) .. (0,0) .. controls (3.31,0.3) and (6.95,1.4) .. (10.93,3.29)   ;
\draw [line width=0.75]    (49,33.99) -- (49.94,168.99)(46,34.01) -- (46.94,169.01) ;
\draw [shift={(48.5,177)}, rotate = 269.6] [color={rgb, 255:red, 0; green, 0; blue, 0 }  ][line width=0.75]    (10.93,-3.29) .. controls (6.95,-1.4) and (3.31,-0.3) .. (0,0) .. controls (3.31,0.3) and (6.95,1.4) .. (10.93,3.29)   ;
\draw [line width=0.75]    (103,94.02) -- (102.1,169.02)(100,93.98) -- (99.1,168.98) ;
\draw [shift={(100.5,177)}, rotate = 270.69] [color={rgb, 255:red, 0; green, 0; blue, 0 }  ][line width=0.75]    (10.93,-3.29) .. controls (6.95,-1.4) and (3.31,-0.3) .. (0,0) .. controls (3.31,0.3) and (6.95,1.4) .. (10.93,3.29)   ;
\draw [line width=0.75]    (136,93) -- (136,113)(133,93) -- (133,113) ;
\draw [shift={(134.5,121)}, rotate = 270] [color={rgb, 255:red, 0; green, 0; blue, 0 }  ][line width=0.75]    (10.93,-3.29) .. controls (6.95,-1.4) and (3.31,-0.3) .. (0,0) .. controls (3.31,0.3) and (6.95,1.4) .. (10.93,3.29)   ;
\draw [line width=0.75]    (138,150) -- (138,170)(135,150) -- (135,170) ;
\draw [shift={(136.5,178)}, rotate = 270] [color={rgb, 255:red, 0; green, 0; blue, 0 }  ][line width=0.75]    (10.93,-3.29) .. controls (6.95,-1.4) and (3.31,-0.3) .. (0,0) .. controls (3.31,0.3) and (6.95,1.4) .. (10.93,3.29)   ;

\draw  [color={rgb, 255:red, 11; green, 57; blue, 112 }  ,draw opacity=1 ]  (1,6) -- (209,6) -- (209,30) -- (1,30) -- cycle  ;
\draw (105,18) node    {$\ \ \ \ \ \ \ \ \ Convenient+SND\ \ \ \ \ \ \ \ \ $};
\draw  [color={rgb, 255:red, 11; green, 57; blue, 112 }  ,draw opacity=1 ]  (81.5,64) -- (202.5,64) -- (202.5,88) -- (81.5,88) -- cycle  ;
\draw (142,76) node    {$\ \ \ \ \ \ \ \ \ SIND\ \ \ \ \ \ \ \ \ \ $};
\draw  [color={rgb, 255:red, 11; green, 57; blue, 112 }  ,draw opacity=1 ]  (4,179) -- (208,179) -- (208,203) -- (4,203) -- cycle  ;
\draw (7,183.4) node [anchor=north west][inner sep=0.75pt]    {$\ \ \ \ \ \ \ \ Isolated\ Singularity\ \ \ \ \ \ \ $};
\draw (16,97) node [anchor=north west][inner sep=0.75pt]   [align=left] {\cite{Oka2010}};
\draw  [color={rgb, 255:red, 11; green, 57; blue, 112 }  ,draw opacity=1 ]  (116.5,122) -- (161.5,122) -- (161.5,146) -- (116.5,146) -- cycle  ;
\draw (139,134) node    {$SPND$};
\draw (143,96) node [anchor=north west][inner sep=0.75pt]   [align=left] {Thm.~\ref{lem:innerpar}};
\draw (140,36) node [anchor=north west][inner sep=0.75pt]   [align=left] {\cite{AraujoBodeSanchez}};
\draw (142,152) node [anchor=north west][inner sep=0.75pt]   [align=left] {Thm.~\ref{strong-isolated3}};
\draw (80,126) node [anchor=north west][inner sep=0.75pt]   [align=left] {\cite{AraujoBodeSanchez}};
\end{tikzpicture}

\label{boundaryfu}
\end{subfigure}
\caption{Relations between the different types of non-degeneracy and isolatedness of singularities.}
\label{fig:diagram}
\end{figure}

We provide a number of examples that show that for all implications in the diagram the converse is not true in general. This is in contrast to the complex setting, where (strong) inner non-degeneracy and (strong) partial non-degeneracy are equivalent.
\begin{teo}\label{teo:examples}
None of the implications in Figure~\ref{fig:diagram} is an equivalence.
\end{teo}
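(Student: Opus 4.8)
The plan is to prove the theorem by exhibiting, for each arrow drawn in the two sub-diagrams of Figure~\ref{fig:diagram}, an explicit mixed polynomial that satisfies the hypothesis of that implication but not its conclusion, so that the converse fails. Two remarks keep the number of required examples small. First, the right-hand sub-diagram is obtained from the left-hand one by replacing every condition with its ``strong'' version and ``weakly isolated'' by ``isolated'', so I would build the strong examples on the same templates as the weak ones. Second, one example can settle several arrows at once: by Theorem~\ref{lem:innerpar}, a mixed polynomial with a weakly isolated singularity that is \emph{not} partially non-degenerate is automatically not inner non-degenerate, and hence not convenient-and-non-degenerate, so it witnesses the strictness of every implication in diagram~(a) whose conclusion is ``weakly isolated singularity''. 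Thus it suffices to produce, for diagram~(a), one polynomial that is inner non-degenerate but not convenient-and-ND, one that is partially non-degenerate but not inner non-degenerate, and one that has a weakly isolated singularity but is not partially non-degenerate, and analogously for diagram~(b).

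For the implications Convenient+ND $\Rightarrow$ IND and Convenient+SND $\Rightarrow$ SIND it is enough to note that (strong) inner non-degeneracy never forces the mixed Newton boundary to meet the coordinate axes. A non-convenient holomorphic polynomial such as $f(u,v)=u^{3}+uv$, read as a mixed polynomial, already works: its Newton boundary is the single compact face joining $(3,0)$ and $(1,1)$, which does not reach the $v$-axis, so $f$ is not convenient, while a direct inspection of $\Sigma_{f}$ and of the face functions shows $f$ is (strongly) inner non-degenerate with an isolated singularity at the origin. A genuinely mixed alternative is a semiholomorphic polynomial, holomorphic in $v$, whose support avoids the $u$-axis; the verification is again routine.

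The implications IND $\Rightarrow$ PND and SIND $\Rightarrow$ SPND demand more, because in characteristic zero no holomorphic polynomial separates Wall's and Mondal's conditions (Mondal's conjecture \cite{mondal}, proved there in two and three variables), so a separating example must be genuinely mixed. The strategy is to engineer a mixed polynomial whose full Newton boundary carries a face whose face function has a critical point in $(\C^{*})^{2}$ --- this single bad face defeats (S)IND, and naming it is an easy existential check --- while ensuring that after deleting the offending monomials, equivalently after restricting to a suitable sub-collection of weight vectors as in Definition~\ref{Newtoncond} and Definition~\ref{Newtoncondstrong}, the remaining partial Newton data is (strongly) non-degenerate and still forces a (weakly) isolated singularity. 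For the positive side one exhibits the witnessing sub-collection explicitly and checks its face functions termwise; for the negative side one only has to point to the bad face.

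Finally, for PND $\Rightarrow$ Weakly Isolated and SPND $\Rightarrow$ Isolated we need a mixed polynomial with a (weakly) isolated singularity at the origin that fails to be (strongly) partially non-degenerate. The hard direction is proving that $f$ is \emph{not} (strongly) partially non-degenerate: the definition quantifies over all admissible sub-collections, so one must identify every monomial of $f$ that could contribute to a non-degenerate partial boundary and check that each resulting candidate either violates the (strong) non-degeneracy condition at some face or fails to control the singularity. The complementary task is to verify the isolatedness of the origin by hand, that is, to solve the mixed polynomial system defining $\Sigma_{f}$ (and $V_{f}\cap\Sigma_{f}$) in a neighbourhood of the origin, which is done by separating real and imaginary parts and running a finite case analysis on which coordinates vanish. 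I expect these two steps --- the exhaustive exclusion of partial non-degeneracy and the direct real-algebraic proof of isolatedness --- to be the main obstacle; by contrast, reading off the mixed Newton boundaries, evaluating face functions, and exhibiting the single degenerate face that rules out (S)IND are all essentially mechanical.
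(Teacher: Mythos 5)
Your overall strategy matches the paper's: this theorem \emph{is} proved by exhibiting examples, three per diagram suffice by the chain structure, and your observation that a single example in $\mathrm{WIS}\setminus\mathrm{PND}$ (resp. $\mathrm{IS}\setminus\mathrm{SPND}$) settles all three arrows landing on the rightmost box is exactly right. Your first example, a non-convenient holomorphic polynomial such as $u^{3}+uv$ (the paper uses $u(u^{2}-v^{2})$), is fine: in the holomorphic case (S)IND and (S)PND coincide, so a non-convenient holomorphic isolated singularity separates $\mathrm{Conv}{+}\mathrm{(S)ND}$ from (S)IND.

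The gap is that for the two remaining arrows you describe a \emph{strategy} rather than carrying it out, and in this theorem the examples \emph{are} the proof. For IND $\not\Leftarrow$ PND and SIND $\not\Leftarrow$ SPND one has to write down a genuinely mixed polynomial, identify the degenerate face, and then run the nontrivial verification that it is nevertheless (S)PND --- a computation over all weight vectors $P$ comparing the lowest-$P$-degree parts of $s_{1,f},s_{2,f},s_{3,f}$. Your proposal correctly notes these two notions cannot be separated by a holomorphic polynomial, but offers no candidate. The paper's Example~\ref{ex1}, $f=v\bar v-u\bar u+\bar v u^{2}$, kills inner non-degeneracy because the unique face function $v\bar v-u\bar u$ is identically singular, and the PND check is a short argument in polar coordinates; Example~\ref{ex2}, $f=u^{10}+u^{2}v+(v\bar v)^{n}+\bar u v^{2n-1}$, gives the strong version. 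Similarly, for $\mathrm{WIS}\not\Leftarrow\mathrm{PND}$ and $\mathrm{IS}\not\Leftarrow\mathrm{SPND}$, you correctly anticipate that proving failure of (S)PND is the hard step, but the paper does not solve $\Sigma_{f}$ by brute real-algebraic case analysis as you suggest; it instead invokes the semiholomorphic constructions of \cite{Bode2022semih,bode:thomo,Rudolph1987}, where the single compact face is degenerate \emph{by construction} and isolatedness is known by the construction's design (via the loop-of-polynomials / braid picture). Without concrete polynomials and the corresponding degeneracy and isolatedness computations, the proposal is a correct outline but not a proof.
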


Having proved that strong partial non-degeneracy implies the existence of an isolated singularity, we would like to know how far this implication is from being an equivalence. We prove that if we assume that a polynomial map satisfies some extra properties, then the notion of strong partial non-degeneracy is equivalent to the existence of an isolated singularity. We find such conditions for the semiholomorphic case (Proposition~\ref{prop:strongboundary}) and the general mixed case (Proposition~\ref{prop:strongboundary2}). As a consequence we obtain a criterion of non-isolatedness of the singularity at the origin (Corollary~\ref{non-isolated}), which emphasizes the known fact that isolated singularities are very rare. To our knowledge, this is one of the first results that lead to the non-isolatedness of a mixed polynomial from data obtained from a suitable Newton polygon.

All holomorphic polynomials $f:\C^2\to\C$ with an isolated singularity satisfy the strong Milnor condition~(SMC), which means that $f/|f|$ defines a $S^1$-fibration on the complement of $V_f$ in sufficiently small 3-spheres. This property is in general not shared by mixed polynomials with isolated singularities \cite{Milnor1968}. However, we find that strong inner non-degeneracy is a sufficient condition for such a Milnor fibration.

\begin{teo}\label{SMCteo}
Let $f : (\C^2,0) \to (\C,0)$ be a strongly inner non-degenerate mixed polynomial. Then $f$ satisfies the strong Milnor condition.
\end{teo}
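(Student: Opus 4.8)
The strategy is to reduce the strong Milnor condition for $f$ to the assertion that the Milnor set of $f$ does not accumulate at the origin, and to establish the latter by a curve selection argument powered by the radial Euler identity for Newton face functions. By Theorem~\ref{lem:innerpar} and Theorem~\ref{strong-isolated3}, a strongly inner non-degenerate $f$ has an isolated singularity at $O$, so $\Sigma_f\cap B_{\varepsilon_1}=\{O\}$ for some $\varepsilon_1>0$, the set $V_f$ is transverse to every sphere $S_\varepsilon$ with $0<\varepsilon\le\varepsilon_1$, and $0$ is the only critical value of $f$ near the origin; in particular the Milnor tube fibration exists. Under these hypotheses $f$ satisfies the strong Milnor condition if and only if it is $d$-regular at $O$, equivalently if and only if the Milnor set
\[
M(f)=\{\,z\notin V_f:\ \operatorname{grad}\arg f(z)\in\mathbb{R}\cdot z\,\}=\{\,z\notin V_f:\ \mathrm{i}\,(f\,\overline{\mathrm{d}f}-\bar f\,\overline{\mathrm{d}}f)(z)\in\mathbb{R}\cdot z\,\}
\]
does not accumulate at $O$; here $\mathrm{d}f=(\partial f/\partial z_1,\dots,\partial f/\partial z_n)$, $\overline{\mathrm{d}}f=(\partial f/\partial\bar z_1,\dots,\partial f/\partial\bar z_n)$, the bar in $\overline{\mathrm{d}f}$ denotes componentwise conjugation, and one uses $|f|^{2}\operatorname{grad}\arg f=\mathrm{i}\,(f\,\overline{\mathrm{d}f}-\bar f\,\overline{\mathrm{d}}f)$ (see \cite{Oka2010,AraujoBodeSanchez} and the references therein). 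So it suffices to prove that $M(f)\cap B_{\varepsilon_0}=\emptyset$ for some $\varepsilon_0>0$.

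Suppose not. As $M(f)$ is semialgebraic with $O$ in its closure, the Curve Selection Lemma produces a real-analytic arc $z:[0,\delta)\to\mathbb{C}^n$ with $z(0)=O$ and $z(t)\in M(f)$ for $t\in(0,\delta)$, hence $f(z(t))\neq 0$ and $z(t)\notin\Sigma_f$ there. Writing $z_j(t)=a_jt^{p_j}+(\text{higher order})$ with $a_j\neq 0$ whenever $z_j\not\equiv 0$, and first disposing of the arcs lying in a coordinate subspace --- for which the pertinent face functions are still those of inner weight vectors (with one large entry), and the computation below applies verbatim --- I may assume all $z_j\not\equiv 0$, so that $P=(p_1,\dots,p_n)\in\mathbb{Z}_{>0}^n$ is an inner weight vector and $a=(a_1,\dots,a_n)\in(\mathbb{C}^*)^n$. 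Let $f_P$ be the face function of $f$ on $\Delta(P;f)$, radially weighted homogeneous of degree $d=d(P;f)$ with respect to $P$; then each monomial of $f$ of $P$-weight larger than $d$ contributes strictly higher order along $z(t)$, so $f(z(t))=f_P(a)\,t^{d}+(\text{h.o.t.})$, $\partial_{z_j}f(z(t))=\partial_{z_j}f_P(a)\,t^{d-p_j}+(\text{h.o.t.})$, and likewise for $\partial_{\bar z_j}f$.

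Assume first that $f_P(a)\neq 0$. Writing the defining relation of $M(f)$ along the arc as $\mathrm{i}\,(f\,\overline{\mathrm{d}f}-\bar f\,\overline{\mathrm{d}}f)(z(t))=\lambda(t)\,z(t)$, the condition $z(t)\notin\Sigma_f$ forces $\lambda\not\equiv 0$. A coordinatewise comparison of the leading $t$-orders then determines $\operatorname{grad}\arg f_P(a)$ up to a real scalar on the minimal-weight coordinates and shows it vanishes on the others; combining this with the radial Euler identity $\langle\operatorname{grad}\arg f_P(a),(p_1a_1,\dots,p_na_n)\rangle_{\mathbb{R}}=0$, valid because $\arg f_P$ is constant along the weighted $\mathbb{R}_{>0}$-action $s\mapsto(s^{p_1}z_1,\dots,s^{p_n}z_n)$, forces $\operatorname{grad}\arg f_P(a)=0$. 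Since $f_P(a)\neq 0$, this means $\overline{\mathrm{d}f_P}(a)=\bigl(\overline{f_P(a)}/f_P(a)\bigr)\,\overline{\mathrm{d}}f_P(a)$ with unimodular factor, i.e. $a\in\Sigma_{f_P}\cap(\mathbb{C}^*)^n$, contradicting the strong inner non-degeneracy of $f$.

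The remaining possibility, $f_P(a)=0$, is the main obstacle: then the radially homogeneous computation above degenerates and one has to descend to a finer face. By strong inner non-degeneracy $f_P$ has no critical point on $(\mathbb{C}^*)^n$, so $V_{f_P}\cap(\mathbb{C}^*)^n$ is a smooth real-codimension-$2$ submanifold containing $a$, while $f(z(t))=\beta t^{d'}+(\text{h.o.t.})$ with $\beta\neq 0$ and $d'>d$. I would treat this exactly as in the proof of Theorem~\ref{strong-isolated3}, using the iterated Newton-boundary / toric-modification analysis of \cite{AraujoBodeSanchez}: choose a weight vector adapted to the higher-order behaviour of $z(t)$ along $V_{f_P}$ and rerun the order comparison together with the Euler identity for the associated finer face function; since the orders involved strictly increase, the descent terminates, and in each branch one again exhibits a critical point of a face function on a torus, contradicting strong inner non-degeneracy. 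This yields $M(f)\cap B_{\varepsilon_0}=\emptyset$ for some $\varepsilon_0>0$, whence, by the reduction above, $f$ satisfies the strong Milnor condition.
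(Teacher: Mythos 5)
The case $f_P(a)=0$ is a genuine gap, not a detail. When $f_P(a)\neq 0$ you derive $\operatorname{grad}\arg f_P(a)=0$ and conclude $a\in\Sigma_{f_P}$, which is fine, but when $f_P(a)=0$ the function $\arg f_P$ is not even defined at $a$ and that mechanism is unavailable. You then propose an iterated ``descent to a finer face,'' claiming to treat it ``exactly as in the proof of Theorem~\ref{strong-isolated3}'' -- but the proof of Theorem~\ref{strong-isolated3} contains no such descent: there the arc lies in $\Sigma_f$, so $s_{j,f}(z(\tau))\equiv 0$ and one reads off $(s_{j,f})_P(a,b)=0$ directly, with no dichotomy on whether $f_P(a)$ vanishes. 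There is no analogous identity here, and it is not at all clear what ``rerun the order comparison for a finer face function'' should mean for the curve $z(\tau)$, whose tangency to $V_{f_P}$ is an analytic, not combinatorial, phenomenon. As written this step is not a proof.

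The paper sidesteps the dichotomy entirely. Since $z(\tau)\in M_{\arg f}$ avoids $V_f$, the leading coefficient $\alpha$ of $f(z(\tau))=\alpha\tau^{q}+h.o.t.$ is \emph{always} non-zero, even when $f_P(a)=0$ (in which case $q>d$). The obstructions $w_1,w_2$ of Lemma~\ref{lem:w} are defined in terms of this $\alpha$ and the graded pieces $(\,\cdot\,)_{(d-p_i,P)}$ of the partial derivatives, not in terms of $f_P(a)$, and Lemma~\ref{lem:w} asserts -- for \emph{any} $\alpha\in\C^*$ -- that a common zero of $w_1,w_2$ is a critical point of $f_P$. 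Combined with the computation of $\frac{d}{d\tau}f(z(\tau))$ and the real-part/imaginary-part bookkeeping (Eqs.~\eqref{thm41eq2}--\eqref{contradict1}), this forces $w_1(a,b)=w_2(a,b)=0$ in one stroke and hands the contradiction to SIND. You should also be aware that ``the computation below applies verbatim'' to arcs in a coordinate subspace is optimistic: the paper needs Lemma~\ref{lem:extremeSND}, Lemma~\ref{lem:conv} and Lemma~\ref{lem:conv2} to handle the weight vectors $P$ with $\tfrac{p_1}{p_2}>k_1$ or $<k_N$, including a non-trivial separate argument when $(1,n)\in\Delta(P_1)\cap\mathrm{supp}(f)$ and another for $u(\tau)\equiv 0$ or $v(\tau)\equiv 0$; none of this is addressed in your sketch.
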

This includes the case of convenient and strongly non-degenerate mixed polynomials that was proved in \cite{Oka2010} and the case of radially weighted homogeneous with isolated singularity, which was proved in \cite{AraujoTibar}.

\begin{prop}\label{teo:rad}
Let $f: (\C^2,0) \to (\C,0)$ be a radially weighted homogeneous mixed polynomial. Then the following properties are equivalent:
\begin{itemize}
\item $f$ has a weakly isolated singularity at the origin and satisfies the strong Milnor condition.
\item $f$ has an isolated singularity at the origin.
\item $f$ is strongly partially non-degenerate.
\item $f$ is strongly inner non-degenerate.
\end{itemize}
\end{prop}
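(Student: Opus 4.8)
The plan is to prove the cycle
\[
\text{SIND}\Longrightarrow\text{SPND}\Longrightarrow\text{isolated singularity}\Longrightarrow\text{SIND}
\]
and then attach the remaining item. The first two implications are Theorem~\ref{lem:innerpar} and Theorem~\ref{strong-isolated3}, so the only new arrow inside the cycle is $\text{isolated}\Rightarrow\text{SIND}$, and this is where radial weighted homogeneity enters. Granting the cycle, an isolated singularity is in particular weakly isolated, and $f$ is then strongly inner non-degenerate, so Theorem~\ref{SMCteo} gives the strong Milnor condition (this recovers \cite{AraujoTibar}); it therefore only remains to prove that a radially weighted homogeneous $f$ with a weakly isolated singularity satisfying the strong Milnor condition has an isolated singularity.

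For $\text{isolated}\Rightarrow\text{SIND}$, let $(p,q)$ be the weights and $d$ the radial degree. Since the $\R_{>0}$-action $t\cdot(u,v)=(t^{p}u,t^{q}v)$ rescales the real Jacobian of $f$ by invertible linear maps, $\Sigma_f$ is a real cone, so an isolated singularity is equivalent to $\Sigma_f=\{O\}$. Moreover all monomials of $f$ project to lattice points on the single line $\{pa+qb=d\}$, hence the mixed Newton boundary $\Gamma(f)$ consists of one segment and its two endpoints, and only three face functions need to be inspected. The face function of the segment is $f$ itself, and $\Sigma_f\cap(\C^{*})^{2}=\emptyset$ is immediate from $\Sigma_f=\{O\}$. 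For an inner vertex $P$ (not contained in a coordinate hyperplane, so both coordinates $\geq1$) I would split according to the minimal $(u,\bar u)$-degree $a_{0}$ of $f$: if $a_{0}\geq2$ then $f$ lies in the ideal $(u,\bar u)^{2}$, all first partials of $f$ vanish along $\{u=0\}$, so $\{u=0\}\subseteq V_f\cap\Sigma_f$ and $f$ is not even weakly isolated; if $a_{0}=1$ the vertex face function has the shape $u\,g_1(v,\bar v)+\bar u\,g_2(v,\bar v)$, the rank of $\mathrm{d}f$ at a point $(0,v_0)$ with $v_0\neq0$ drops exactly when $|g_1(v_0)|=|g_2(v_0)|$, and weak isolatedness forces $|g_1|\neq|g_2|$ off the origin, which is precisely the inequality making $\overline{g_1(v)}=\lambda g_2(v)$ unsolvable for $|\lambda|=1$ when $v\neq0$, i.e.\ forcing $u\,g_1+\bar u\,g_2$ to be non-singular on $(\C^{*})^{2}$. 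The other vertex is treated symmetrically with the roles of $u$ and $v$ exchanged, and the degenerate configurations ($\Gamma(f)$ a single point, $\Gamma(f)$ contained in an axis, or a vertex on an axis) are handled separately, $f$ being non-isolated in each case or else having the segment as its only inner face.

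For $(\text{weakly isolated}+\text{SMC})\Rightarrow\text{isolated}$, suppose $f$ is not isolated. As $\Sigma_f$ is a cone there is a point $z_0$ on a small sphere $S^{3}_{\varepsilon}$ with $z_0\in\Sigma_f$, and weak isolatedness gives $f(z_0)\neq0$. Differentiating $f(t\cdot z)=t^{d}f(z)$ at $t=1$ yields the real Euler relation
\[
p\bigl(u\,\partial_u f+\bar u\,\partial_{\bar u}f\bigr)+q\bigl(v\,\partial_v f+\bar v\,\partial_{\bar v}f\bigr)=d\,f,
\]
so $\mathrm{d}f(z_0)[E(z_0)]=d\,f(z_0)$ with $E(z)=(pu,qv)$ the Euler vector field; since $E$ is transverse to $S^{3}_{\varepsilon}$ and $d\,f(z_0)\neq0$, the real differential $\mathrm{d}f(z_0)$ has rank exactly $1$ with image $\R\cdot f(z_0)$. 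Hence $\mathrm{d}f(z_0)[w]/f(z_0)\in\R$ for all $w\in T_{z_0}S^{3}_{\varepsilon}$, so $\mathrm{d}(f/|f|)$ vanishes on $T_{z_0}S^{3}_{\varepsilon}$ and $z_0$ is a critical point of $f/|f|\colon S^{3}_{\varepsilon}\setminus V_f\to S^{1}$, contradicting that this map is a locally trivial fibration (hence a submersion). Thus $\Sigma_f\cap S^{3}_{\varepsilon}\subseteq V_f$, and with weak isolatedness $\Sigma_f\cap S^{3}_{\varepsilon}=\emptyset$, so $O$ is an isolated singularity.

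The step I expect to cause the most trouble is $\text{isolated}\Rightarrow\text{SIND}$: although the Newton boundary of a radially weighted homogeneous polynomial is as simple as possible, one must match the definition of strong inner non-degeneracy from \cite{AraujoBodeSanchez} exactly with the conditions on the two vertex face functions and check carefully, in every degenerate configuration of the segment, that isolatedness, or already weak isolatedness, either fails or provides the required non-degeneracy; the clean reformulation of the vertex conditions through the coefficient polynomials $g_1,g_2$ (and their $v$-counterparts) is the heart of the matter. The Euler-relation argument for the Milnor-fibration step, by comparison, is short once the transversality of the Euler vector field to small spheres is observed.
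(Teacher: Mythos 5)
Your overall architecture (prove the cycle SIND~$\Rightarrow$~SPND~$\Rightarrow$~isolated~$\Rightarrow$~SIND via Theorems~\ref{lem:innerpar}, \ref{strong-isolated3}, and a radial-action argument; then attach the SMC item by Theorem~\ref{SMCteo} together with an argument that (weakly isolated $+$ SMC) $\Rightarrow$ isolated) is exactly what the paper does through Corollary~\ref{radialequivweak} and Proposition~\ref{SMCprop}. Your Euler-relation argument for (weakly isolated $+$ SMC) $\Rightarrow$ isolated is correct and is essentially the same computation the paper carries out in Proposition~\ref{SMCprop}, merely organized differently: the paper writes out the real Jacobian in a basis containing the Euler vector $T=(p_1u,p_2v)$ and reads off rank~$2$, while you differentiate $f(t\cdot z)=t^d f(z)$ to see that $\mathrm{d}f(z_0)$ has rank~$1$ with image $\R\cdot f(z_0)$ and hence $z_0\in\Sigma_{\arg f}$. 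Both are fine.

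The step isolated $\Rightarrow$ SIND is where you go astray, and it is worth being precise about why. For a radially weighted homogeneous $f$ with exactly one compact 1-face, the two endpoints of that 1-face are \emph{extreme} vertices in the sense of the paper (each bounds a unique 1-face), so condition SIND-(ii) does not involve them at all. Condition SIND-(i) involves the \emph{1-face} functions $f_{P_1}$ and $f_{P_N}$, not the vertex face functions; since $N=1$ and $f$ is radially weighted homogeneous, $f_{P_1}=f_{P_N}=f$. Thus SIND-(i) asks that $f$ have no critical point in $(\C^2\setminus\{v=0\})\cup(\C^2\setminus\{u=0\})=\C^2\setminus\{0\}$, which \emph{is} the isolated-singularity condition, and SIND-(ii) (no critical points of $f$ in $(\C^*)^2$) is then automatic. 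Your analysis of the vertex face function $u\,g_1(v,\bar v)+\bar u\,g_2(v,\bar v)$, the dichotomy $a_0=1$ versus $a_0\geq 2$, and the equation $|g_1|=|g_2|$ is therefore verifying conditions that are not in Definition~\ref{def-strong} (you appear to have it conflated with Oka's SND, which does impose conditions on \emph{all} compact faces, including extreme vertices). It does no logical harm because the implications you derive are true, but the worry you flag in your last paragraph is misplaced: the match with the definition is immediate, and there is no case analysis to do beyond observing that $f_{P_1}=f_{P_N}=f$ and $\Sigma_f=\{0\}$.
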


The rest of the paper is structured as follows. Section~\ref{sec:prem} reviews the definitions of non-degeneracy of mixed functions introduced in \cite{Oka2010} and \cite{AraujoBodeSanchez}, while Section~\ref{section3} introduces partial non-degeneracy and compares the different notions, proving the implications shown in Figure~\ref{fig:diagram} and stated in Theorem~\ref{lem:innerpar} and Theorem~\ref{strong-isolated3}. We also provide examples that prove Theorem~\ref{teo:examples}. Section~\ref{section5} discusses conditions under which strong partial non-degeneracy becomes equivalent to the existence of an isolated singularity, while Section~\ref{section4} studies the strong Milnor condition, resulting in proofs of Theorem~\ref{SMCteo} and Proposition~\ref{teo:rad}


\vspace{0.3cm}
{\bf Acknowledgments:} B. Bode acknowledges funding from the European Union’s Horizon 2020 Research and Innovation Programme under the Marie Sklodowska-Curie grant agreement No 101023017, and E. Sanchez Quiceno acknowledges the
 supports by grants 2019/11415-3 and 2017/25902-8, São Paulo Research Foundation (FAPESP). This research was started during the visit of the second author to ICMAT, which was financed by FAPESP. We express our gratitude to the institute for their warm reception, particularly to Professor Daniel Peralta Salas for the invaluable support during the visit. The authors are thankful to Professor Osamu Saeki from Kyushu University, Japan, and Professor Raimundo N. Araújo dos Santos from ICMC-USP, Brazil, for their valuable discussions and comments that contributed to the paper.   

\section{PRELIMINARIES}\label{sec:prem}
In this section we review some background on mixed singularities and mixed hypersurfaces. A more detailed account of the concepts and tools concerning mixed polynomials  and their Newton boundaries can be found in \cite{AraujoBodeSanchez, Oka2010}.

We consider the germ of a mixed polynomial $f:(\C^2,0)\to (\C,0)$, $$f(z,\bar{z})=\sum_{\nu,\mu} c_{\nu,\mu}z^{\nu}\bar{z}^{\mu},$$
where $z = (u,v)$, $ \bar{z}= (\bar{u},\bar{v})$, $z^{\nu} = {u}^{\nu_{1}}v^{\nu_{2}}$ for $\nu = (\nu_{1} ,\nu_{2})$ (respectively $\bar{z}^{\mu}=\bar{u}^{\mu_{1}}{\bar{v}}^{\mu_{2}}$ for $\mu=(\mu_{1},\mu_{2})$). 
The support of $f$ is defined as $supp(f) := \{\nu+\mu : c_{\nu,\mu}\neq 0\} \subset \N^2$. 
For mixed polynomials $f$ we consider the singular set, the solutions of  
\begin{equation}\label{criticalpoint}
    \Sigma_f:=
\begin{cases} 
&s_{1,f}:= f_{u} \overline{f_{\bar{v}}}- \overline{f_{\bar{u}}}f_{v}=0, \\
 &s_{2,f}:=|f_{u}|^2-|f_{\bar{u}}|^2=0 \\
 &s_{3,f}:=|f_{v}|^2-|f_{\bar{v}}|^2=0
   \end{cases}
\end{equation}
as a germ of sets at the origin, where $f_x$ denotes the partial derivative with respect to $x$. Note that this definition of the singular set is equivalent to the usual one \cite{AraujoBodeSanchez}, which was also given in the introduction. When $f$ is $u$-semiholomorphic, i.e., $f$ does not depend on the variable $\bar{u}$, the singular set can be defined as the solution of 
\begin{equation}\label{criticalpointsem}
    \Sigma_f:=
\begin{cases} 
 &f_u=0 \\
 &s_{3,f}=0
   \end{cases}
\end{equation}
as a germ of sets at the origin. 
If there is a neighbourhood $U$ of the origin $0\in \C^2$ with $U\cap\Sigma_f=\{0\}$, we say that the origin is an isolated mixed singularity of $f$. If there is a neighbourhood $U$ of the origin $0$ with $U\cap\Sigma_f\cap V_f=\{0\}$, we say that the origin is a weakly isolated mixed singularity of $f$. 
\vspace{0.2cm}

\noindent Oka \cite{Oka2010} defined the Newton boundary $\Gamma_f$ of a mixed polynomial, which in the two variable case is formed by 0-dimensional (“vertices” ) and compact 1-dimensional (“edges”) faces.  
 The \textit{Newton principal part} $f_{\Gamma}$ of $f$ is defined by
\begin{equation}\label{eq:ffboun}
    f_{\Gamma}(z,\bar{z})=\sum_{\nu+\mu \in \Gamma_f} c_{\nu,\mu} z^\nu \bar{z}^\mu.
\end{equation} 
When $f\equiv f_{\Gamma}$, we say that $f$ is a \textit{boundary polynomial}.
\vspace{0.3cm}

For a positive weight vector $P=(p_1,p_2)\in\mathbb{N}^2$ we can define the radial degree of each monomial of $f$ relative to $P$ by setting $rdeg_P (M_{\nu,\mu}):= \sum^2_{j=1} p_j (\nu_j + \mu_j)$, where $M_{\nu,\mu}=c_{\nu,\mu}z^{\nu}\bar{z}^{\mu}$.

To every $P$ we associate a face function $f_P$ which corresponds to the monomials of $f$ on which $rdeg_P$ is minimal among all monomials of $f$. We denote the corresponding minimal value by $d(P;f)$. This face function can be seen as the restriction of $f$ to one face $\Delta(P)$ of $\Gamma_f$
  $$f_{P}(z,\bar{z})=f_{\Delta(P)}(z,\bar{z})\defeq \sum_{\nu+\mu \in \Delta(P)} c_{\nu,\mu}z^{\nu}\bar{z}^{\mu}.$$  
  
We call $f(z, \bar{z})$ a \emph{radially weighted homogeneous polynomial of radial type} if there is a positive weight vector $P\in\mathbb{N}^2$ such that $f=f_P$.
   
The face functions play an important role in the study of the topology and the singularities of $f$ \cite{Oka2010,AraujoBodeSanchez}.
\begin{definition}[Oka \cite{Oka2010}]
A face function $f_{\Delta}$ of a mixed polynomial $f$ for some compact face $\Delta$ (0- or 1-dimensional) of $\Gamma_f$ is called {\bf Newton non-degenerate (ND)} if $V_{f_{\Delta}}\cap\Sigma_{f_{\Delta}}\cap(\mathbb{C}^*)^2=\emptyset$. It is called {\bf strongly Newton non-degenerate (SND)} if $\Sigma_{f_{\Delta}}\cap(\mathbb{C}^*)^2=\emptyset$. We say that $f$ is (strongly) Newton non-degenerate if $f_{\Delta}$ is (strongly) Newton non-degenerate for all compact faces $\Delta$ of $\Gamma_f$.
\end{definition}

Let $f:\C^2\to\C$ be a mixed polynomial with a boundary that has at least one compact 1-face. A 0-dimensional face of the boundary of $\Gamma_+(f)$ that bounds a compact 1-face is called an \textit{extreme vertex} if it is the boundary of a unique 1-face. Otherwise, we call it a \textit{non-extreme vertex}. Let $\{P_1,P_2,\ldots,P_N\}$ be the sequence of weight vectors for which $\Delta(P_i)$ is a compact 1-face of $\Gamma_f$, ordered as in \cite{AraujoBodeSanchez}, i.e., $P_i=(p_{i,1},p_{i,2})\succ P_j=(p_{j,1},p_{j,2})$ if and only if $k_i>k_j$, where $k_\ell=\tfrac{p_{\ell,1}}{p_{\ell,2}}$. With these notions we consider the following definitions.

\begin{definition}[{\cite[Definition 3.1]{AraujoBodeSanchez}}]\label{Newtoncond}
We say that $f$ is an {\bf inner Newton non-degenerate (IND)} if both of the following conditions hold:
\begin{itemize}
    \item[(i)] the face functions $f_{P_1}$ and $f_{P_N}$ have no critical points in $V_{f_{P_1}} \cap (\C^2\setminus \{v=0\})$ and $V_{f_{P_N}}\cap (\C^2\setminus \{u=0\})$, respectively.
\item[(ii)] for each 1-face and non-extreme vertex $\Delta$, the face function $f_\Delta$ has no critical points in $V_{f_\Delta} \cap (\C^*)^{2}$.
\end{itemize}
\end{definition}
As in the other notions of non-degeneracy, there is also a related ``strong'' version of the property.
\begin{definition}[{\cite[Definition 6.1]{AraujoBodeSanchez}}]\label{def-strong}
We say that $f$ is a {\bf strongly inner Newton non-degenerate (SIND)} if both of the following conditions hold:
\begin{itemize}
    \item[(i)] the face functions $f_{P_1}$ and $f_{P_N}$ have no critical points in $ \C^2\setminus \{v=0\}$ and $\C^2\setminus \{u=0\}$, respectively.
\item[(ii)] for each 1-face and non-extreme vertex $\Delta$, the face function $f_\Delta$ has no critical points in $(\C^*)^{2}$.
\end{itemize}
\end{definition} 

We may extend both definitions to mixed polynomials whose Newton boundary consists of a single vertex, so that $N=0$, by saying that such a mixed polynomial $f$ is strongly inner Newton non-degenerate if $f_{\Gamma}$ has no critical points in $\mathbb{C}^2\backslash\{(0,0)\}$. It is inner Newton non-degenerate if $f_{\Gamma}$ has no critical points in $V_{f_{\Delta}}\cap(\mathbb{C}^2\backslash\{(0,0)\})$. Such functions are for example included in the complex setting \cite{mondal}. The proofs of our results are often written in terms of the face functions $f_{P_i}$ associated with 1-faces. The same arguments apply to a mixed polynomial $f$ with a Newton boundary without compact 1-face if we set $P_1=P_N=(1,1)$, $f_{P_1}=f_{P_N}=f_{\Gamma}$, as well as $k_1=k_N=1$. However, the origin is not necessarily a singular point of such polynomials, as is shown by the example $f(u,v)=u$. Hence the corresponding results for these functions should be interpreted as ``If $f$ has a singular point at the origin, it is an isolated singularity.'' as opposed to simply ``$f$ has an isolated singularity at the origin.'', for example.

In \cite{AraujoBodeSanchez} we proved that the concept of an inner non-degenerate mixed polynomial is a natural generalization of the definition of a non-degenerate, convenient mixed polynomial. Furthermore, inner non-degenerate mixed polynomials have weakly isolated singularities. The analogous statement for the family of non-degenerate, convenient mixed polynomials was originally proved by Oka \cite{Oka2010}. Likewise, strongly inner non-degenerate mixed polynomials have isolated singularities and generalize convenient, strongly non-degenerate mixed polynomials. 

\section{(STRONGLY) INNER AND PARTIALLY NON-DEGENERATE MIXED POLYNOMIALS}\label{section3}
In this section we introduce the notion of partial non-degeneracy and strong partial non-degeneracy of mixed functions. We study their relations with inner non-degeneracy and strong inner non-degeneracy of \cite{AraujoBodeSanchez} as well as Oka's non-degeneracy and strong non-degeneracy.

\begin{definition}\label{Newtoncond}
We say that $f$ is {\bf partially Newton non-degenerate (PND)} if both of the following conditions hold for every positive weight vector $P$:
\begin{itemize}
    \item[(i)] the mixed polynomials 
    \begin{equation}\label{PNDiv}
    (s_{1,f}(0,v))_P=(s_{2,f}(0,v))_P=(s_{3,f}(0,v))_P=(f(0,v))_P=0 
    \end{equation}
      have no common solutions in $(\C^*)^2$
     and 
     \begin{equation}\label{PNDiu}
     (s_{1,f}(u,0))_P=(s_{2,f}(u,0))_P=(s_{3,f}(u,0))_P=(f(u,0))_P=0  
     \end{equation} 
  have no common solutions in $(\C^*)^2$.
\item[(ii)] the mixed polynomials 
    \begin{equation}\label{PNDii}
    (s_{1,f})_P=(s_{2,f})_P=(s_{3,f})_P=f_P=0  
    \end{equation}
     have no common solution in $(\C^*)^2$.
\end{itemize}
\end{definition}

Note that the functions in condition (i) only depend on one of the two complex variables, since the other has been set to 0. The condition that Eq.~\eqref{PNDiv} has no solutions in $(\mathbb{C}^*)^2$ is thus equivalent to the condition that these functions (considered as functions in $v$) have no common solutions in $\mathbb{C}^*$. The analogous statement for Eq.~\eqref{PNDiu} holds. 

\begin{definition}\label{Newtoncondstrong}
We say that $f$ is {\bf strongly partially Newton non-degenerate (SPND)} if both of the following conditions hold for every positive weight vector  $P$:
\begin{itemize}
    \item[(i)] the mixed polynomials 
    \begin{equation}\label{SPNDiv}
    (s_{1,f}(0,v))_P=(s_{2,f}(0,v))_P=(s_{3,f}(0,v))_P=0 
    \end{equation}
      have no common solutions in $(\C^*)^2$
     and 
     \begin{equation}\label{SPNDiu}
     (s_{1,f}(u,0))_P=(s_{2,f}(u,0))_P =(s_{3,f}(u,0))_P=0 
     \end{equation} 
  have no common solutions in $(\C^*)^2$
\item[(ii)] the mixed polynomials 
    \begin{equation}\label{SPNDii}
    (s_{1,f})_P=(s_{2,f})_P=(s_{3,f})_P=0
    \end{equation}
     have no common solution in $(\C^*)^2$.
\end{itemize}
\end{definition}
\noindent Both definitions reduce to that of partial non-degeneracy of holomorphic functions if $f$ is holomorphic. Note that the functions $s_{i,f}(0,v)$, $s_{i,f}(u,0)$ with $i=1,2,3$, $f(0,v)$ and $f(u,0)$ are mixed polynomials whose Newton boundary consists of a single vertex. Therefore, the condition (i) in both Definition~\ref{Newtoncond} and Definition~\ref{Newtoncondstrong} is independent of the choice of weight vector $P$.  

We want to compare the new definitions with inner non-degeneracy. As in corresponding definitions for holomorphic polynomials, partial non-degeneracy refers to properties of the face functions of $s_{i,f}$, the functions that define the singular set of $f$, while inner non-degeneracy is concerned with $s_{i,f_P}$, the functions that define the singular set of $f_P$, a face function of $f$. This implies one crucial difference between the two notions. While it is obvious from the definition that (strong) inner non-degeneracy only depends on the Newton principal part of a mixed function, the same is not true for (strong) partial non-degeneracy. This is a manifestation of the fact that taking derivatives and taking $P$-parts are operations that do not commute, so that for example $(f_u)_P\neq (f_P)_u$ in general, as was already explored in \cite{AraujoBodeSanchez}.

In order to prove Theorem~\ref{lem:innerpar}, i.e., that (strong) inner non-degeneracy implies (strong) partial degeneracy, we use the same idea as in the proof of Proposition~3.6 in \cite{AraujoBodeSanchez}. 

\begin{proof}[Proof of Theorem~\ref{lem:innerpar}]
We prove the statement for the strong types of non-degeneracy. The analogous statement for IND and PND follows the same line of reasoning.

We assume that $f$ is SIND, but not SPND, and prove the theorem by contradiction. Suppose that SPND-(ii) does not hold for some weight vector $P=(p_1,p_2)$, with $k_1\geq \frac{p_1}{p_2} \geq k_N$. Then there is a point $(a,b)\in\C^2$ with $ab\neq0$ that is a solution of the system \eqref{SPNDii} 
\begin{equation}\label{eqcsl}
(s_{1,f})_P(a,b)=(s_{2,f})_P(a,b)=(s_{3,f})_P(a,b)=0. 
\end{equation}
This can be written in terms of the face functions of the partial derivatives of $f$:
\begin{equation}\label{eqweak1}
(s_{1,f})_P(a,b)=
\begin{cases}  
((f_{u})_P \cdot (\overline{f_{\bar{v}}})_P)(a,b),&\text{ if } \text{\small $d(P;f_u)+d(P;f_{\bar{v}})<d(P;f_v)+d(P;f_{\bar{u}})$} \\
-((f_{v})_P(\overline{f_{\bar{u}}})_P)(a,b),&\text{ if } \text{\small $d(P;f_v)+d(P;f_{\bar{u}})<d(P;f_u)+d(P;f_{\bar{v}})$} \\
(s_{1,f})_P(a,b),&\text{ if } \text{\small $d(P;f_u)+d(P;f_{\bar{v}})=d(P;f_v)+d(P;f_{\bar{u}})$}
   \end{cases}
\end{equation}
\begin{equation} \label{eqweak2}
(s_{2,f})_P(a,b)=
\begin{cases} 
|(f_{u})_P(a,b)|^2, &\text{ if } \text{\small  $d(P;f_u)<d(P;f_{\bar{u}})$}\\
-|(f_{\bar{u}})_P(a,b)|^2,&\text{ if } \text{\small $d(P;f_{\bar{u}})<d(P;f_u)$} \\
(s_{2,f})_P(a,b),&\text{ if } \text{\small $d(P;f_{u})=d(P;f_{\bar{u}})$} 
   \end{cases}
\end{equation}

\begin{equation} \label{eqweak3}
(s_{3,f})_P(a,b)=
\begin{cases} 
|(f_{v})_P(a,b)|^2, &\text{ if } \text{\small  $d(P;f_v)<d(P;f_{\bar{v}})$}\\
-|(f_{\bar{v}})_P(a,b)|^2, &\text{ if } \text{\small  $d(P;f_{\bar{v}})<d(P;f_v)$} \\
(s_{3,f})_P(a,b), &\text{ if } \text{\small $d(P;f_{v})=d(P;f_{\bar{v}})$} 
     \end{cases}
\end{equation}
where $((f_{u})_P\cdot (\overline{f_{\bar{v}}})_P)(a,b)=(f_{u})_P(a,b)\cdot (\overline{f_{\bar{v}}})_P(a,b)$. Since $k_1\geq \frac{p_1}{p_2} \geq k_N$, the face function $f_P$ is neither of type $u$- and $\bar{u}$-semiholomorphic nor $v$- and $\bar{v}$-semiholomorphic. Therefore, applying \cite[Lemma~3.5]{AraujoBodeSanchez} we have 
\begin{equation*}
\begin{cases} 
&d(P;f_u)> d(P;f_{\bar{u}})=d(P;f)-p_1, \text{ if $f_P$ is $\bar{u}$-semiholomorphic} \\
 &d(P;f_{\bar{u}})> d(P;f_u)=d(P;f)-p_1, \text{ if $f_P$ is $u$-semiholomorphic} \\
 &d(P;f_{\bar{u}})=d(P;f_u)=d(P;f)-p_1, \text{ if $f_P$ depends on both $u$ and $\bar{u}$}
   \end{cases}
\end{equation*}
and 
\begin{equation*}
\begin{cases} 
&d(P;f_v)> d(P;f_{\bar{v}})=d(P;f)-p_2, \text{ if $f_P$ is $\bar{v}$-semiholomorphic} \\
 &d(P;f_{\bar{v}})> d(P;f_v)=d(P;f)-p_2, \text{ if $f_P$ is $v$-semiholomorphic} \\
 &d(P;f_{\bar{v}})=d(P;f_v)=d(P;f)-p_2, \text{ if $f_P$ depend in $v$ and also $\bar{v}$}.
   \end{cases}
\end{equation*}
Depending on the combination of these different cases, there are 9 cases to consider in total.
We show the calculation for the case that $d(P;f_u)> d(P;f_{\bar{u}})=d(P;f)-p_1$ and $d(P;f_{\bar{v}})=d(P;f_v)=d(P;f)-p_2$. The other cases with $k_1\geq \frac{p_1}{p_2} \geq k_N$ follow analogously. Comparing Eqs.~\eqref{eqweak1}-\eqref{eqweak3} with Eq.~\eqref{eqcsl} we find 
\begin{align}\label{redusys}
 &(s_{1,f})_P(a,b)=(f_{v})_P(a,b) \cdot (\overline{f_{\bar{u}}})_P(a,b)=0 \nonumber\\
 &(s_{2,f})_P(a,b)=|(f_{\bar{u}})_P(a,b)|^2=0  \\
 & (s_{3,f})_P(a,b)=0.\nonumber 
\end{align}
Applying \cite[Lemma~3.5]{AraujoBodeSanchez} in these cases we have $(f_{\bar{u}})_P= (f_P)_{\bar{u}}$, $(f_v)_P= (f_P)_v$ and $(f_{\bar{v}})_P= (f_P)_{\bar{v}}$. Moreover, $f_P$ is a $\bar{u}$-semiholomorphic polynomial, i.e., $(f_P)_{u}\equiv 0$. Thus, we have two subcases to consider: 

If $s_{3,f_P}\not\equiv 0$, then 
\begin{equation}\label{s3fP}
s_{3,f_P}= |(f_P)_v|^2-|(f_P)_{\bar{v}}|^2= |(f_v)_P|^2-|(f_{\bar{v}})_{P}|^2\not\equiv 0.
\end{equation}
 Note that, 
\begin{align}\label{faces3f}
s_{3,f}=&((f_v)_P+M_1)\overline{((f_v)_P+M_1)}-((f_{\bar{v}})_P+M_2)\overline{((f_{\bar{v}})_P+M_2)} \nonumber\\=&
(f_v)_P (\overline{f_v})_P
+ (f_v)_P \overline{M_1}  +  M_1 (\overline{f_{v}})_P +M_1 \overline{M_1}-\nonumber\\ & ((f_{\bar{v}})_P (\overline{f_{\bar{v}}})_P  + (f_{\bar{v}})_P  \overline{M_2}+ M_2(\overline{f_{\bar{v}}})_P+  M_2 \overline{M_2}),
\end{align}
where $M_1=f_v-(f_v)_P$ and $M_2=f_{\bar{v}}-(f_{\bar{v}})_P$ are mixed polynomials satisfying 
$d(P;(f_v)_P)<d(P; M_1)$ and $d(P;(f_{\bar{v}})_P)<d(P; M_2)$ if $M_1\not\equiv0$ and $M_2\not\equiv0$, respectively. Thus, by Eq.~\eqref{s3fP} we have that $(s_{3,f})_P=(f_v)_P (\overline{f_v})_P -(f_{\bar{v}})_P (\overline{f_{\bar{v}}})_P=s_{3,f_P}$. Therefore, it follows that 
\begin{align}
s_{1,f_P}&=(f_P)_u \overline{(f_P)_{\bar{v}}}-  \overline{(f_P)_{\bar{u}}} (f_P)_v =-\overline{(f_P)_{\bar{u}}}(f_P)_v=-\overline{(f_{\bar{u}})_P}(f_v)_P=(s_{1,f})_P\label{equalitiesface1}  \\
 s_{2,f_P}&= |(f_P)_u|^2-|(f_P)_{\bar{u}}|^2= -|(f_P)_{\bar{u}}|^2=-|(f_{\bar{u}})_P|^2=(s_{2,f})_P \label{equalitiesface2} \\
s_{3,f_P}&=(s_{3,f})_P,\label{equalitiesface3} 
\end{align}
and a solution $(a,b)$ in Eq.~\eqref{redusys} implies a solution $s_{1,f_P}(a,b)=s_{2,f_P}(a,b)=s_{3,f_P}(a,b)=0$, which contradicts  SIND-(ii).  

If $s_{3,f_P}\equiv 0$, then the system $s_{1,f_P}=s_{2,f_P}=s_{3,f_P}=0$ is equal to $s_{1,f_P}=s_{2,f_P}=0$. Since Eqs.~\eqref{equalitiesface1}-\eqref{equalitiesface2} still hold in this case, the same solution $(a,b)$ in Eq.~\eqref{redusys}, $(s_{1,f})_P(a,b)=(s_{2,f})_P(a,b)=(s_{3,f})_P(a,b)=0$, implies a solution $s_{1,f_P}(a,b)=s_{2,f_P}(a,b)=0$, which contradicts SIND-(ii).  Therefore, the weight vector $P$ satisfies $\frac{p_1}{p_2}>k_1$,  $\frac{p_1}{p_2}<k_N$ with $ab\neq 0$ or SPND-(i) does not hold.   
\vspace{0.3cm}

In order to deal with these remaining cases we may write $f_{P_1}(u,\bar{u},v,\bar{v})=g(u,\bar{u},v,\bar{v})+B(v,\bar{v})u+C(v,\bar{v})\bar{u}+A(v,\bar{v})$, where $A(v,\bar{v})$ consists of all those terms of $f_{P_1}$ that depend neither on $u$ nor on $\bar{u}$, $B(v,\bar{v})u$ is its linear term in $u$, $C(v,\bar{v})\bar{u}$ its linear term in $\bar{u}$ and $g(u,\bar{u},v,\bar{v})$ is the sum of all remaining terms in $f_{P_1}$. The condition that $f_{P_1}$ has no critical point in $\C^2 \setminus \{v=0\}$ is equivalent to the following system having no solutions:
\vspace{0.3cm}
If $f$ is $v$-convenient,
\begin{equation}\label{almostnondeg1}
\begin{cases} 
&B(v,\bar{v}) \overline{A_{\bar{v}}(v,\bar{v})}-\overline{C(v,\bar{v})}A_v(v,\bar{v})  =0 \\
 &|B(v,\bar{v})|^2-|C(v,\bar{v})|^2=0 \\
 &|A_v(v,\bar{v}) |^2-|A_{\bar{v}}(v,\bar{v})|^2=0.\\ 
   \end{cases}
\end{equation}
If $f$ is not $v$-convenient, i.e., $A(v,\bar{v})\equiv 0$, 
\begin{equation}\label{almostnondeg2}
\begin{cases} 
 &|B(v,\bar{v})|^2-|C(v,\bar{v})|^2=0. 
   \end{cases}
\end{equation}

Now, suppose that SPND-(i) or SPND-(ii) is not satisfied for a weight vector $P=(p_1,p_2)$ with $\frac{p_1}{p_2}>k_1$ and the system \eqref{SPNDii} has a solution of the form $(a,b)$ or $(0,b)$. Then we get:

\vspace{0.3cm}

If $f$ is $v$-convenient,  
\begin{equation}\label{conv}
\begin{cases} 
&B(b,\bar{b}) \overline{A_{\bar{v}}(b,\bar{b})}-\overline{C(b,\bar{b})}A_v(b,\bar{b})  =0 \\
 &|B(b,\bar{b})|^2-|C(b,\bar{b})|^2=0 \\
 &|A_v(b,\bar{b}) |^2-|A_{\bar{v}}(b,\bar{b})|^2=0.\\ 
   \end{cases}
\end{equation}
\vspace{0.3cm}

If $f$ is not $v$-convenient, i.e., $A(v,\bar{v})\equiv 0$, 
\begin{equation}\label{noconv}
\begin{cases} 
&B(b,\bar{b}) (\overline{B_{\bar{v}}(b,\bar{b})a+C_{\bar{v}}(b,\bar{b})\bar{a} })-(B_{v}(b,\bar{b})a+C_{v}(b,\bar{b})\bar{a})\overline{C(b,\bar{b})}  =0\\
 &|B(b,\bar{b})|^2-|C(b,\bar{b})|^2=0 \\
 &|B_{v}(b,\bar{b})a+C_{v}(b,\bar{b})\bar{a}|^2 -|B_{\bar{v}}(b,\bar{b})a+C_{\bar{v}}(b,\bar{b})\bar{a}|^2=0.\\
   \end{cases}
\end{equation}
In both cases, convenient and non-convenient,  a solution of Eq.~\eqref{conv} or Eq.~\eqref{noconv} with $(a,b), b\neq0$ gives a solution of Eq.~\eqref{almostnondeg1} and Eq.~\eqref{almostnondeg2}, respectively, which is a contradiction to SIND-(i). Analogously,  we can use SIND-(i) in $f_{P_N}$ to see the case $\frac{p_1}{p_2}<k_N$ and the case where Eq.~\eqref{SPNDiu} has a solution of the form $(a,0)$. In either case we obtain a contradiction. Recall that the condition SPND-(i) is independent of the weight vector $P$, so if SPND-(i) fails, it fails for a weight vector of the form discussed above. Therefore, our assumption that $f$ is not SPND was wrong, which proves that SIND implies SPND.

The proof of the implication from IND to PND follows the same argument. The only difference is that there is one more equation $f_P=0$ in the systems of equations above.
\end{proof}
The following example shows that partially non-degenerate mixed functions are not necessarily inner non-degenerate.

\begin{ex}\label{ex1}
The mixed polynomial $f(u,\bar{u},v,\bar{v})=v\bar{v}-u\bar{u}+\bar{v}u^2$ is partially non-degenerate, but not inner non-degenerate.

Its Newton boundary only has one compact 1-face with weight vector $P=(1,1)$ and $f_P(u,\bar{u},v,\bar{v})=v\bar{v}-u\bar{u}$. This shows that $f$ is not inner non-degenerate (or ``inner degenerate''), since every point in $\mathbb{C}^2$ is a singular point of $f_P$.

However, $(f(0,v))_P=f(0,v)=v\bar{v}$ and $(f(u,0))_P=f(u,0)=u\bar{u}$, which both have no zeros in $\mathbb{C}^*$, so that condition (i) in Definition~\ref{Newtoncond} is satisfied.

If $(u_*,v_*)\in(\mathbb{C}^*)^2$ is a root of $f_P$ for any weight vector $P$, then $P=(1,1)$ and $|u_*|=|v_*|$. We calculate for $P=(1,1)$
\begin{equation}
(s_{3,f})_P(u_*,\overline{u_*},v_*,\overline{v_*})=-\overline{u_*}^2v_*-u_*\overline{v_*}^2.
\end{equation}
If this equals 0, we have $\text{Re}(u_*^2\overline{v_*})=0$ and this implies
\begin{equation}
2\varphi_1-\varphi_2=\pm \pi \text{ mod }2\pi,
\end{equation}
where $\varphi_1=\arg(u_*)$ and $\varphi_2=\arg(v_*)$. Substituting this into $(s_{1,f})_P$ leads to
\begin{align}
(s_{1,f})_P(u_*,v_*)&=-u_*^3+2u_*\overline{v_*}^2\nonumber\\
&=|u_*|^3 \rme^{-3\varphi_1\rmi}=\overline{u_*}^3,
\end{align}
since $|u_*|=|v_*|$.
Since $u_*\neq 0$, there is no common solution to Eq.~\eqref{PNDii} in $(\mathbb{C}^*)^2$ and hence $f$ is partially non-degenerate. 
\end{ex}

In fact, examples of polynomials that are partially non-degenerate, but not inner non-degenerate, can also be found among the family of semiholomorphic polynomials, such as $f(u,v,\bar{v})=u^3-2(v+\bar{v})+\left(\rmi v\bar{v}+\tfrac{1}{4}(v^2-\bar{v}^2)\right)u$. To test the non-degeneracies PND and SPND for the semiholomorphic case we can use the following result. 
\begin{lemma}\label{lem:uP}
Let $f$ be a mixed polynomial  and $P$ a positive weight vector. Then 
\begin{itemize}
\item[(i)] if $f_P$ is $u$-semiholomorphic and $$d(P;(f_u)_P)+ d(P;(f_{\bar{v}})_P)\neq d(P;(f_v)_P) +d(P;(f_{\bar{u}})_P),$$  then 
$(s_{1,f})_P=(s_{2,f})_P=(s_{3,f})_P=0$ if and only if $(f_P)_u=(s_{3,f})_P=0$.
\item[(ii)] if $f$ is $u$-semiholomorphic, then $(s_{1,f})_P=(s_{2,f})_P=(s_{3,f})_P=0$ if and only if $(f_u)_P=(s_{3,f})_P=0.$ 
\end{itemize}
\end{lemma}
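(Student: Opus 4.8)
\textbf{Proof proposal for Lemma~\ref{lem:uP}.}

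The plan is to exploit the same mechanism that underlies Eqs.~\eqref{eqweak1}--\eqref{eqweak3} in the proof of Theorem~\ref{lem:innerpar}: taking the $P$-part of a sum of products is governed by the radial degrees of the factors, and whenever one term strictly dominates, the $P$-part collapses onto that term. The key structural input is that for a $u$-semiholomorphic $f_P$ one has $(f_P)_{\bar u}\equiv 0$, together with \cite[Lemma~3.5]{AraujoBodeSanchez}, which tells us when the operations ``take $P$-part'' and ``take partial derivative'' commute, i.e.\ when $(f_{x})_P=(f_P)_x$ for $x\in\{u,\bar u,v,\bar v\}$.

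For part (i), since $f_P$ is $u$-semiholomorphic we have $d(P;f_{\bar u})>d(P;f_u)$ (by Lemma~3.5, applied as in the proof of Theorem~\ref{lem:innerpar}), so the second line of Eq.~\eqref{eqweak2} gives $(s_{2,f})_P=-|(f_{\bar u})_P|^2$, and hence $(s_{2,f})_P=0$ on $(\mathbb{C}^*)^2$ forces $(f_{\bar u})_P=0$ there. The hypothesis $d(P;(f_u)_P)+d(P;(f_{\bar v})_P)\neq d(P;(f_v)_P)+d(P;(f_{\bar u})_P)$ then puts us in one of the first two lines of Eq.~\eqref{eqweak1}: either $(s_{1,f})_P=(f_u)_P\,\overline{(f_{\bar v})_P}$ or $(s_{1,f})_P=-(f_v)_P\,\overline{(f_{\bar u})_P}$. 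In the second subcase $(f_{\bar u})_P=0$ already annihilates $(s_{1,f})_P$, so the system reduces to $(f_{\bar u})_P=(s_{3,f})_P=0$; since $(f_{\bar u})_P=(f_P)_{\bar u}$ by Lemma~3.5 and $(f_P)_{\bar u}\equiv 0$ anyway, this is just $(s_{3,f})_P=0$ — but then the ``if and only if'' with $(f_P)_u=(s_{3,f})_P=0$ needs $(f_P)_u=0$ too, so one checks this subcase cannot actually occur when $(f_u)_P\not\equiv 0$, or else handles it by noting $(f_P)_u=(f_u)_P$ under the relevant degree equality. In the first subcase $(s_{1,f})_P=0$ on $(\mathbb{C}^*)^2$ gives $(f_u)_P=0$ there; again $(f_u)_P=(f_P)_u$ by Lemma~3.5, so the system becomes $(f_P)_u=(s_{3,f})_P=0$, and conversely this clearly implies all three equations vanish (using $(s_{2,f})_P=-|(f_{\bar u})_P|^2$ and that $(f_P)_u=0$ kills the dominant term of $(s_{1,f})_P$). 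Assembling the subcases gives the stated equivalence; the main obstacle is the bookkeeping to confirm that the ``wrong'' subcase of Eq.~\eqref{eqweak1} is either impossible or still yields $(f_P)_u=0$, so that the clean statement holds uniformly.

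For part (ii), $f$ itself is $u$-semiholomorphic, so $f_{\bar u}\equiv 0$, hence $(f_{\bar u})_P\equiv 0$; then $(s_{2,f})_P=(|f_u|^2)_P=|(f_u)_P|^2$ and $(s_{1,f})_P$ reduces (no case distinction needed, the $\overline{f_{\bar u}}f_v$ branch is absent) to $(f_u)_P\,\overline{(f_{\bar v})_P}$, up to the degree comparison — but in any event $(s_{1,f})_P$ is divisible by $(f_u)_P$ or equals it times a nonvanishing-on-$(\mathbb{C}^*)^2$ factor. Thus on $(\mathbb{C}^*)^2$ the system $(s_{1,f})_P=(s_{2,f})_P=(s_{3,f})_P=0$ is equivalent to $(f_u)_P=(s_{3,f})_P=0$, which is precisely the semiholomorphic singular-set description Eq.~\eqref{criticalpointsem} applied $P$-part-wise. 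The converse direction is immediate since $(f_u)_P=0$ makes $(s_{1,f})_P$ and $(s_{2,f})_P$ vanish. This part is essentially a direct unwinding of the definitions with $f_{\bar u}=0$, and I expect no real obstacle; the only care needed is to observe that here $(f_u)_P=(f_P)_u$ is \emph{not} claimed — part (ii) is phrased in terms of $(f_u)_P$ precisely because $f$, not $f_P$, is assumed semiholomorphic — so one should not invoke Lemma~3.5 in this part.
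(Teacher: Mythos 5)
Your part (ii) is essentially the paper's argument (with the minor imprecision that $(s_{1,f})_P = (f_u)_P\,\overline{(f_{\bar v})_P}$ always, without any degree comparison, because $f_{\bar u}\equiv 0$ kills the other branch entirely and $P$-parts multiply). The problem is in part (i), where there is a genuine error at the first step that cascades through the rest.

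Since $f_P$ is $u$-semiholomorphic, \cite[Lemma~3.5]{AraujoBodeSanchez} gives $d(P;f_u) < d(P;f_{\bar u})$. You state this inequality correctly, but then invoke ``the second line of Eq.~\eqref{eqweak2}'' to get $(s_{2,f})_P = -|(f_{\bar u})_P|^2$. That line of Eq.~\eqref{eqweak2} is the case $d(P;f_{\bar u}) < d(P;f_u)$ — the opposite of what you have. The correct reading is the first line: $(s_{2,f})_P = |(f_u)_P|^2 = |(f_P)_u|^2$. Consequently $(s_{2,f})_P=0$ forces $(f_P)_u = 0$, not $(f_{\bar u})_P = 0$. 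This is the key point: the $s_2$-equation directly gives you the wanted conclusion $(f_P)_u=0$, and the rest of the argument is about showing $(s_{1,f})_P$ contributes nothing beyond $(s_{3,f})_P$. Your misreading sends the whole argument after the wrong variable. Compounding this, the claim ``$(f_{\bar u})_P = (f_P)_{\bar u}$ by Lemma~3.5'' is not justified: that lemma identifies $(f_x)_P$ with $(f_P)_x$ only when $f_P$ actually depends on $x$, and here $(f_P)_{\bar u}\equiv 0$ while $(f_{\bar u})_P$ is in general a nonzero polynomial (the $P$-part of $f_{\bar u}$, which lives strictly above the $(d-p_1)$ level). You sense something is off when you say the ``wrong subcase'' needs extra bookkeeping; the resolution is to start from $(s_{2,f})_P=|(f_P)_u|^2$. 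Then in the subcase $d(P;(f_u)_P)+d(P;(f_{\bar v})_P) < d(P;(f_v)_P)+d(P;(f_{\bar u})_P)$, Eq.~\eqref{eqweak1} gives $(s_{1,f})_P = (f_P)_u\,\overline{(f_{\bar v})_P}$, so $(s_{1,f})_P$ vanishes whenever $(f_P)_u$ does. In the opposite subcase, $(s_{1,f})_P = -(f_v)_P\,\overline{(f_{\bar u})_P}$, and one observes that the assumed inequality together with $d(P;(f_u)_P)<d(P;(f_{\bar u})_P)$ forces $d(P;(f_v)_P) < d(P;(f_{\bar v})_P)$, whence $(s_{3,f})_P = |(f_v)_P|^2$; thus $(s_{3,f})_P = 0$ already gives $(f_v)_P = 0$, which kills $(s_{1,f})_P$. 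Either way the system collapses to $(f_P)_u = (s_{3,f})_P = 0$, as claimed.
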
 
\begin{proof}
\begin{itemize}
\item[(i)] Since $f_P$ is $u$-semiholomorphic, then by  \cite[Lemma~3.5]{AraujoBodeSanchez} we have that 
$d(P;f)-p_1=d(P;(f_u)_P)< d(P;(f_{\bar{u}})_P)$, $(f_P)_u=(f_u)_P$ and $(f_P)_{\bar{u}}\equiv 0$. Thus 
\begin{equation}\label{simplifyeq1}
(s_{2,f})_P= |(f_u)_P|^2=|(f_P)_u|^2=s_{2,f_P}. 
\end{equation}
Since $d(P;(f_u)_P) +d(P;(f_{\bar{v}})_P)\neq d(P;(f_v)_P) +d(P;(f_{\bar{u}})_P)$, we have two cases to consider:  

If $d(P;(f_u)_P)+ d(P;(f_{\bar{v}})_P)< d(P;(f_v)_P) + d(P;(f_{\bar{u}})_P)$, then $(s_{1,f})_P= (f_u)_P (f_{\bar{v}})_P=(f_P)_u (f_{\bar{v}})_P$ and thus $(s_{1,f})_P=(s_{2,f})_P= 0$ if and only if $(f_P)_u=0$. 

If  $d(P;(f_u)_P)+ d(P;(f_{\bar{v}})_P)> d(P;(f_v)_P) + d(P;(f_{\bar{u}})_P)$, then
\begin{equation}\label{simplifyeq2}
(s_{1,f})_P= (f_v)_P (f_{\bar{u}})_P
\end{equation}
and since $d(P;(f_u)_P)< d(P;(f_{\bar{u}})_P)$, we have
$$d(P;(f_v)_P)<d(P;(f_{\bar{v}})_P).$$
This inequality implies that $(s_{3,f})_P= |(f_v)_P|^2$. It follows from Eqs.~\eqref{simplifyeq1}-\eqref{simplifyeq2} that $(s_{1,f})_P=(s_{2,f})_P=(s_{3,f})_P=0$ if and only if $(f_P)_u=(f_v)_P=0$.
\item[(ii)] Since $f_{\bar{u}}\equiv0$, we have 
\begin{align}\label{semicase1}
s_{1,f}&= f_u \overline{f_{\bar{v}} }- \overline{f_{\bar{u}} }f_v= f_u \overline{f_{\bar{v}} } \\\label{semicase2}
 s_{2,f}&= |f_u|^2-|f_{\bar{u}}|^2= |f_u|^2.
\end{align}
Taking the $P$-part of the mixed polynomials of Eqs.~\eqref{semicase1}-\eqref{semicase2} yields
\begin{align}
(s_{1,f})_P&= (f_u)_P(\overline{f_{\bar{v}}})_P\\
 (s_{2,f})_P&= |(f_u)_P|^2.
\end{align}
Therefore, $(f_u)_P=0$ if and only if $(s_{1,f})_P=(s_{2,f})_P=0$.  
\end{itemize}
\end{proof}

\begin{ex}\label{ex2}
The mixed polynomial $f(u,\bar{u},v,\bar{v})=u^{10} + u^2 v + (v \bar{v})^n + \bar{u} v^{2n - 1}$ with $n>1$ is strongly partially non-degenerate, but not strongly inner non-degenerate.

The polynomial has two 1-faces with weight vectors $P_1=(2n-1,2)$ and $P_2=(1,8)$, and
\begin{align*}
f_{P_1}(u,v,\bar{v})&=u^2 v+(v\bar{v})^n\\
f_{P_2}(u,v,\bar{v})&=u^{10} + u^2 v.
\end{align*}
The face function $f_{P_1}$ is semiholomorphic and satisfies the inequality from Lemma~\ref{lem:uP}(i). Therefore, we can find its critical points by solving $(f_{P_1})_u=s_{3,f_{P_1}}=0$. We find that
\begin{align*}
(f_{P_1})_u(u,v,\bar{v})&=2u v\\
s_{3,f_{P_1}}(u,v,\bar{v})&=n\bar{u}^2v^{n-1}\bar{v}^n+(u\bar{u})^2(1+nv^n\bar{v}^{n-1}).
\end{align*}
It follows that $(0,v)$ is a critical point of $f_{P_1}$ for all $v\in\mathbb{C}$. Therefore, $f$ is not strongly inner non-degenerate, since it does not satisfy Condition (i) in Definition~\ref{def-strong}.

To prove SPND for $f$ we first calculate
\begin{align*}
s_{2,f}(u,\bar{u},v,\bar{v})=&100 u^9 \bar{u}^9 + 20 u \bar{u}^9 v + 20 u^9 \bar{u} \bar{v} + 4 u \bar{u} v \bar{v}\\
& -  (v\bar{v})^{2 n-1}
\end{align*}
We find that $s_{2,f}(0,v)=- (v \bar{v})^{2 n-1}$ and $s_{2,f}(u,0)=100 u^9 \bar{u}^9$. Both of these functions have no zeros in $(\mathbb{C}^*)^2$, so that Condition (i) in Definition~\ref{Newtoncondstrong} is satisfied.

Furthermore, we have
\begin{align*}
s_{1,f}(u,\bar{u},v,\bar{v})=& 10 n u^9 v^{n-1} \bar{v}^n-u^2 \bar{v}^{2 n-1} + \bar{u} v^{2 n-2} \bar{v}^{2 n-1}\\
 &- 2 n \bar{u} v^{2 n-2} \bar{v}^{2 n-1} + 2 n u (v \bar{v})^n  - n \bar{v}^{2 n-1} v^{n-1} \bar{v}^n.
\end{align*}

The Newton polygon of this function has two compact 1-faces with $Q_1=(2n-2,1)$, $Q_2=(1,1)$ and
\begin{align}
(s_{1,f})_{Q_1}(u,\bar{u},v,\bar{v})&= 2 n u (v \bar{v})^n- n \bar{v}^{3 n-1} v^{n-1}\label{s1fq1}\\
(s_{1,f})_{Q_2}(u,\bar{u},v,\bar{v})&=-u^2 \bar{v}^{2 n-1}+ 2 n u (v \bar{v})^n.\nonumber
\end{align}
Note that if $P\notin\{Q_1,Q_2\}$, then $(s_{1,f})_P$ consists of exactly one summand of the expressions above. Thus these functions have no zeros in $(\mathbb{C}^*)^2$ and it follows that Condition (ii) of Definition~\ref{Newtoncondstrong} is satisfied for all such $P$. 

For $P=Q_2$ check that $(s_{2,f})_P(u,\bar{u},v,\bar{v})=4 u \bar{u} v\bar{v}$, which is non-zero on $(\mathbb{C}^*)^2$.

For $P=Q_1$ we get
\begin{equation}
(s_{2,f})_P(u,\bar{u},v,\bar{v})=4 u\bar{u} v\bar{v}- (v\bar{v})^{2 n-1}.
\end{equation}
Assume that $(u_*,v_*)\in(\mathbb{C}^*)^2$ is a zero of $(s_{2,f})_P$. Then $|u_*|=\tfrac{1}{4}|v_*|^{2n-2}$. But then the first summand in Eq.~\eqref{s1fq1} has modulus $2n|v_*|^{4n-2}$, while the second summand has modulus $n|v_*|^{4n-2}$. Therefore, $(s_{1,f})_{Q_1}(u_*,v_*)\neq 0$, since $v_*\neq 0$.  

We thus have a contradiction. There is no common zero $(u_*,v_*)\in(\mathbb{C}^*)^2$ of $(s_{1,f})_P$ and $(s_{2,f})_P$ for any weight vector $P$. Therefore, Condition (ii) in Definition~\ref{Newtoncondstrong} is satisfied and $f$ is strongly partially non-degenerate.
\end{ex}

We now prove that PND and SPND imply the existence of a weakly isolated singularity and an isolated singularity, respectively. This follows the same reasoning as the proof of Propositions 3.6 and 6.2 in \cite{AraujoBodeSanchez}.

\begin{proof}[Proof of Theorem~\ref{strong-isolated3}]
 Assume that the origin is not an isolated singularity. Then via the curve selection lemma there exists a real analytic curve $z(\tau)$ of critical points starting at the origin. i.e., a curve $z(\tau)=(u(\tau),v(\tau))=(a\tau^{p_1}+h.o.t., b\tau^{p_2}+h.o.t.), 0 \leq \tau \leq 1$, where $h.o.t.$ refers to higher order terms in $\tau$, satisfying 
\begin{itemize}
    \item[(i)] $z(0)=0$ and $z(\tau)\in \C^2$ for $\tau>0$. 
    \item[(ii)] $s_{1,f}(z(\tau))=s_{2,f}(z(\tau))=s_{3,f}(z(\tau))=0.$  
\end{itemize}
If $u(\tau)\not\equiv 0$ and $v(\tau)\not\equiv 0$, we set $P=(p_1,p_2)$. If $u(\tau)\equiv0$, we set $a=0$ and $P=(0,p_2)$, similarly, $b=0$ and $P=(p_1,0)$ if $v(\tau)\equiv 0$.

Expanding the equations of (ii), we find that the coefficient of $\tau^{d(P;s_{j,f})}$ is $(s_{j,f})_P(a,b)$ for all $j\in\{1,2,3\}$ (compare also the proof of Lemma~\ref{lem:lowestorder}). Since the right hand side of (ii) vanishes, these coefficients have to be zero as well. Thus we get a common solution $(a,b)\in (\C^{*})^2$, $(a,0)$ or $(0,b)$ of the systems \eqref{SPNDii}, \eqref{SPNDiu} or \eqref{SPNDiv}, respectively, which yields a contradiction of SPND-(i) or SPND-(ii). The case of partial non-degeneracy follows in the same way by including the equation $f(z(\tau))=0$. 
  \end{proof}
\begin{corolario}\label{radialequivweak} Let  $f$ be a radially weighted homogeneous polynomial. Then the following conditions are equivalent:
\begin{itemize}
\item[(i)] $f$ has a weakly isolated singularity (isolated singularity) at the origin.
\item[(ii)] $f$ is inner non-degenerate (strongly inner non-degenerate). 
\item[(iii)] $f$ is partially non-degenerate (strongly partially non-degenerate).
\end{itemize}
\end{corolario}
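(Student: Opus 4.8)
The plan is to prove Corollary~\ref{radialequivweak} by assembling the implications already available in the paper together with a routine converse argument specific to the radially weighted homogeneous case. Recall that for a radially weighted homogeneous polynomial $f$ there is a single positive weight vector $P$ with $f=f_P$, so the Newton boundary consists of (at most) one compact $1$-face (or a single vertex), and $f_{P_1}=f_{P_N}=f_P=f$.

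First I would record the easy implications. Theorem~\ref{lem:innerpar} gives (strong) inner non-degeneracy $\Rightarrow$ (strong) partial non-degeneracy, i.e. (ii)$\Rightarrow$(iii). Theorem~\ref{strong-isolated3} gives (strong) partial non-degeneracy $\Rightarrow$ (weakly) isolated singularity, i.e. (iii)$\Rightarrow$(i). So it only remains to prove (i)$\Rightarrow$(ii): if $f$ is radially weighted homogeneous with a weakly isolated (resp. isolated) singularity at the origin, then $f$ is inner non-degenerate (resp. strongly inner non-degenerate).

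For (i)$\Rightarrow$(ii) I would argue as follows. Since $f=f_P$, for any point $(a,b)\in(\C^*)^2$ the radial homogeneity lets us scale: the curve $\tau\mapsto(a\tau^{p_1},b\tau^{p_2})$ has the property that $s_{j,f}(a\tau^{p_1},b\tau^{p_2})=\tau^{d(P;s_{j,f})}s_{j,f}(a,b)$ for each $j$, because each $s_{j,f}$ is itself radially weighted homogeneous (products and differences of products of partial derivatives of a radially weighted homogeneous polynomial are again radially weighted homogeneous of the appropriate radial degree). Hence if $f_P$ had a critical point in $(\C^*)^2$ — violating IND-(ii)/SIND-(ii) in the $1$-face case, or the vertex version — that critical point would generate a whole curve of critical points emanating from the origin, contradicting the assumption that $O$ is a (weakly) isolated singularity. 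For the boundary conditions IND-(i)/SIND-(i) concerning critical points of $f_{P_1}=f_P$ off $\{v=0\}$ (resp. off $\{u=0\}$), I would handle the remaining locus $\{u=0,v\ne 0\}$ (resp. $\{u\ne 0,v=0\}$) by the same scaling argument: a critical point of the form $(0,b)$ with $b\ne 0$ yields the curve $\tau\mapsto(0,b\tau^{p_2})$ of critical points, again contradicting isolatedness. In the weak (non-strong) version, one additionally needs these critical points to lie on $V_f$; but $f=f_P$ is radially homogeneous, so $f(a,b)=0 \Leftrightarrow f(a\tau^{p_1},b\tau^{p_2})=0$, and the same curve lies inside $V_f$, contradicting weak isolatedness. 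This closes the cycle (i)$\Rightarrow$(ii)$\Rightarrow$(iii)$\Rightarrow$(i).

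The main obstacle is bookkeeping rather than conceptual: one must carefully treat the degenerate cases where the Newton boundary of $f$ is a single vertex (so $N=0$), where one uses the extension of Definitions~\ref{Newtoncond} and \ref{def-strong} given in the text with $f_\Gamma=f$, and one must verify that the face functions $s_{j,f_P}$ coincide with $(s_{j,f})_P=s_{j,f}$ in this setting (which is immediate since $f=f_P$ implies $f_u=(f_u)_P$, etc., so no cancellation of leading terms occurs and taking $P$-parts commutes with differentiation here). One should also be mindful that "critical point" in IND/SIND-(i) means a point where the real Jacobian of $f_{P_1}$ drops rank, equivalently a solution of $s_{1,f_{P_1}}=s_{2,f_{P_1}}=s_{3,f_{P_1}}=0$, so the scaling argument applies uniformly to all three equations. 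With these identifications in place the proof is a short assembly, and I would present it as: "(ii)$\Rightarrow$(iii) is Theorem~\ref{lem:innerpar}; (iii)$\Rightarrow$(i) is Theorem~\ref{strong-isolated3}; for (i)$\Rightarrow$(ii), use radial homogeneity to convert any critical point of $f_P$ in the relevant region into a curve of critical points (lying in $V_f$ in the weak case), contradicting (weak) isolatedness."
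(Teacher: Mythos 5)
Your proof is correct and follows essentially the same route as the paper: (ii)$\Rightarrow$(iii) via Theorem~\ref{lem:innerpar}, (iii)$\Rightarrow$(i) via Theorem~\ref{strong-isolated3}, and (i)$\Rightarrow$(ii) by observing that radial weighted homogeneity lets a single critical point of $f=f_P$ be scaled to a curve of critical points (lying in $V_f$ in the weak case), contradicting (weak) isolatedness. The paper merely states the (i)$\Rightarrow$(ii) step without elaboration; your unpacking of the scaling argument is exactly what is implicitly being used.
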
 
\begin{proof}
(iii) implies (i) by Proposition~\ref{strong-isolated3}. 
The fact that (ii) implies (iii) follows from Proposition~\ref{lem:innerpar}.
Finally, (i) implies (ii) because if $f$ is radially weighted homogeneous, then the existence of a weakly isolated singularity (isolated singularity) implies conditions IND-(i) and IND-(ii) (SIND-(i) and SIND-(ii)) for the weight vector $P$ associated to the unique 1-face of $\Gamma_f$. 
    \end{proof}

\begin{proof}[Proof of Theorem~\ref{teo:examples}]
In Example 3.4 in \cite{AraujoBodeSanchez} we give an example of a semiholomorphic polynomial that is inner non-degenerate, but Newton degenerate. There are plenty of (strongly) inner non-degenerate mixed polynomials that are not convenient, even among the holomorphic ones (for which (S)IND and (S)PND are equivalent). Consider for example $f(u,v)=u(u^2-v^2)$. It is not convenient, but since $f_u=3u^2-v^2$, $f_v=-2uv$, it is strongly inner non-degenerate.

Example~\ref{ex1} and Example~\ref{ex2} illustrate that in general (S)PND does not imply (S)IND for mixed polynomials.

In \cite{Bode2022semih,bode:thomo} the first author constructs families of semiholomorphic polynomials $f$ with isolated singularities or weakly isolated singularities. In both cases, their Newton boundary consists of a single compact 1-face. There is only one term above the Newton boundary, which only depends on $v$ and $\bar{v}$. The face function $f_P$ corresponding to the 1-face is by construction degenerate, i.e., there exists a $(u_*,v_*)\in(\mathbb{C}^*)^2$ with 
\begin{equation}
f_P(u_*,v_*)=(f_P)_u(u_*,v_*)=s_{3,f_P}(u_*,v_*)=0.
\end{equation} 
Since $f_P$ is semiholomorphic, we have $(f_u)_P=(f_P)_u$ and by construction we have $s_{3,f_P}=(s_{3,f})_P$. It thus follows that $f$ is not SPND. In fact, it is not even PND. A different example (with an isolated singularity whose link is the figure-eight knot) is given in \cite{Rudolph1987}. Since isolated singularities are also weakly isolated, this concludes the proof of Theorem~\ref{teo:examples}. 
\end{proof}

\section{WHEN DOES AN ISOLATED SINGULARITY IMPLY NON-DEGENERACY?}\label{section5}

We have seen in Section~\ref{section3} that SPND mixed polynomials have isolated singularities. The analogous result for SIND mixed polynomials was shown in \cite{AraujoBodeSanchez}. However, we know from Theorem~\ref{teo:examples} that not every isolated singularity comes from such a non-degenerate polynomial. The examples by Rudolph \cite{Rudolph1987} and the first author \cite{bode:thomo} that illustrate this are all semiholomorphic and are not even inner non-degenerate. Therefore, they 
might give the impression that once inner (or partial) non-degeneracy is assumed, strong inner (or partial) non-degeneracy and the existence of an isolated singularity are equivalent. This is not the case. In this section we give an example of a semiholomorphic polynomial with an isolated singularity that is inner non-degenerate and partially non-degenerate, but not strongly inner non-degenerate and not strongly partially non-degenerate. This leads us to a set of conditions on the polynomials that (if satisfied) imply the equivalence between SPND and the existence of an isolated singularity. The example also highlights the need for a distinction between IND and SIND, as well as between PND and SPND, even for semiholomorphic polynomials. Recall that for holomorphic functions in these dimensions all of these notions are equivalent.


First, we recall some results from \cite{AraujoBodeSanchez}.

The face functions play an important role in the study of the topology and the singularities of $f$ \cite{Oka2010,AraujoBodeSanchez} and some of these results come from the following properties of these face functions and their relations with $f$:

Denote the weight vectors whose associated faces are 1-faces of $\Gamma_f$ by $P_i$, $i=1,2,\ldots,N$.
Writing $v=r\rme^{\rmi t}$, $\bar{v}=r\rme^{-\rmi t},$ we may associate to the 1-face $\Delta(P_i)$ corresponding to a weight vector $P_i$ a function that does not depend on $r$ anymore, as follows. For a positive weight vector $P_i=(p_{i,1},p_{i,2})$ let $g_i:\mathbb{C}\times S^1\to\mathbb{C}$ be given by
\begin{equation}\label{resc}
g_i(u,\bar{u},\rme^{\rmi t}):=r^{-\tfrac{d(P_i;f)}{p_{i,2}}}f_{P_i}\left(r^{k_i}u,r^{k_i}\bar{u},r\rme^{\rmi t},r\rme^{-\rmi t}\right),
\end{equation}
where $k_i:=\tfrac{p_{i,1}}{p_{i,2}}$ and $d(P_i;f)$ is the radial degree of $f$ related with $P_i$. Note that the right hand side does not depend on $r$, because $f_{P_i}$ is radially weighted homogeneous with weight vector $P_i$.
 By \cite[Lemma 4.2]{AraujoBodeSanchez} we have  that the function $f$ admits the decomposition
\begin{equation}\label{decomp}
f(u,\bar{u},r\rme^{\rmi t},r\rme^{-\rmi t})=r^{\tfrac{d(P_i;f)}{p_{i,2}}}f_i\left(\frac{u}{r^{k_i}},\frac{\bar{u}}{r^{k_i}},r, t\right),
\end{equation}
where $f_{i}$ is a $r$-parameter deformation of $g_{i},$ that is, the difference $f_i(u,\bar{u},r,t)-g_i(u,\bar{u},\rme^{\rmi t})$ goes to $0,$ as $r\to 0$. If $f$ is radially weighted homogeneous with weight vector $P=P_1$, then $f_1$ does not depend on $r$ and $f_1=g_1$.

If $f$ is $u$-semiholomorphic, we may interpret $g_i$ as a loop in the space of complex polynomials in one variable $u$, whose coefficients are finite Fourier series in $\rme^{\rmi t}$ and $\rme^{-\rmi t}$. We can thus associate to each face function of $f$ a loop in the space of polynomials in one variable. If it is clear which $g_i$ we are referring to, we often drop the index $i$ and simply write $g:\mathbb{C}\times S^1\to\mathbb{C}$ or $g_t:\mathbb{C}\to\mathbb{C}$ with $g_t(u)=g(u,\rme^{\rmi t})$.

Conversely, we may (as in \cite{Bode2019} for example) use loops $g_t$ in the space of polynomials of degree $s$ as above, whose coefficients satisfy certain extra conditions, to obtain radially weighted homogeneous semiholomorphic polynomials. For example, if the loop is $2$-periodic, i.e., all of its coefficients are polynomials in $\rme^{2\rmi t}$ and $\rme^{-2\rmi t}$, so that $g_{t+\pi}=g_t$ for all $t\in[0,2\pi]$, then we can define
\begin{equation}\label{eq:scaling}
f(u,v,\bar{v})=f(u,r\rme^{\rmi t},\rme^{-\rmi t})=r^{sk}g\left(\frac{u}{r^k},\rme^{\rmi t}\right),
\end{equation}
which is a semiholomorphic polynomial for sufficiently large even values of $k\in2\mathbb{Z}$. The constructed semiholomorphic polynomial $f$ has a weakly isolated singularity if and only if the roots of $g_t$ are distinct for all $t\in[0,2\pi]$. It has an isolated singularity if and only if the roots of $g_t$ are distinct for all $t\in[0,2\pi]$ and $\arg(g):(\mathbb{C}\times S^1)\backslash g^{-1}(0)\to S^1$ has no critical points and thus defines a fibration. \cite{Bode2019}.

Since each $g_t$ is holomorphic with respect to $u$, this condition can be expressed in terms of the critical values of the complex polynomials $g_t$. Let $c_j(t)$, $j=1,2,\ldots,s-1$, denote the critical points of $g_t$ and let $v_j(t)=g_t(c_j(t))$ be the corresponding critical values. Then $(u_*,t_*)$ is a critical point of $\arg(g)$ if and only if there is a $j\in\{1,2,\ldots,s-1\}$ such that $u_*=v_j(t_*)$ and $\tfrac{\partial \arg(v_j(t))}{\partial t}|_{t=t_*}=0$.

To every loop $g_t$ in the space of polynomials, we can associate a corresponding loop $\{v_1(t),v_{2},\ldots,v_{s-1}(t)\}$ in the space of critical values $\mathbb{C}^{s-1}/S_{s-1}$, where the symmetric group $S_{s-1}$ acts on an $s-1$-tuple by permutation. A useful technique in the construction of semiholomorphic polynomials is to start with a loop in $V_s:=(\mathbb{C}^*)^{s-1}/S_{s-1}$ that satisfies $\tfrac{\partial \arg(v_j(t))}{\partial t}\neq 0$ for all $t$ and all $j$. Then try to find a corresponding lifted loop in the space of polynomials with the given $v_j(t)$'s as its critical values. If such a lifted loop $g_t$ exists, then its roots are simple for each $t\in[0,2\pi]$ and they thus form a braid $B$. If $g_t$ is also 2-periodic, we can define a semiholomorphic, radially weighted homogeneous polynomial $f$ as in Eq.~\eqref{eq:scaling}. Since $f$ is radially weighted homogeneous, it has an isolated singularity if and only if it strongly inner non-degenerate and strongly partially non-degenerate, which is equivalent to $\arg(g)$ being a fibration, i.e., $\tfrac{\partial \arg(v_j(t))}{\partial t}\neq 0$ for all $t$ and $j$. In this case, the link of the singularity is the closure of $B$.

In the following example, we thus want to start with a loop in $V_s$ that does not have this property, so that the resulting semiholomorphic polynomial $f$ is not SPND and not IPND. However, we want the critical points of $\arg(g)$ to be degenerate, i.e., if $\tfrac{\partial \arg(v_j(t))}{\partial t}|_{t=t_*}= 0$, then $\tfrac{\partial^2 \arg(v_j)}{\partial t^2}(t_*)=0$. We then deform $f$ by adding terms above its Newton boundary that do not change the fact that $f$ is neither SIND nor SPND, but yield an isolated singularity.

\begin{ex}
We start with a loop $v(t)$ in $\mathbb{C}^*$, which is the space of critical values $V_2$ of complex polynomials with distinct roots and degree 2. Following the observation above we want this loop to have argument-critical points, i.e., values $t_*\in[0,2\pi]$ such that $\tfrac{\partial \arg(v)}{\partial t}(t_*)=0$. However, we want these critical points to be degenerate, i.e., $\tfrac{\partial^2 \arg(v)}{\partial t^2}(t_*)=0$.

Take for example $v(t)=\cos(t) - \cos(2 t) + 1-\rmi (\sin(t) - \tfrac{1}{3} \sin(3 t))$, which is shown in Figure~\ref{fig:vt}a) and has argument-critical points at $t=0$ and $t=\pi$. The graph of $\tfrac{\partial\arg(v)}{\partial t}$ is shown in Figure~\ref{fig:vt}b). We may now consider the loop of polynomials $g_t(u)=u^2+v(2t)$. It's critical values are exactly $v(2t)$, i.e., the loop $v(t)$ is traversed twice as $t$ goes from 0 to $2\pi$. Arguments as in \cite[Section 5.1]{Bode2020} show that the roots of this loop of polynomials form a braid that closes to the Hopf link.

\begin{figure}[H]
\centering
\includegraphics[height=4cm]{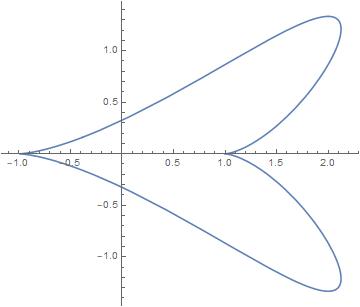}
\includegraphics[height=4cm]{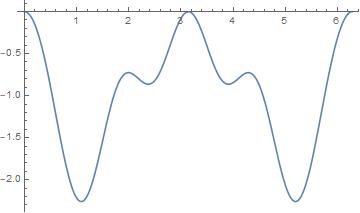}
\caption{a) The parametrized loop $v(t)$. b) The graph of $\tfrac{\partial\arg(v)}{\partial t}$.\label{fig:vt}}
\end{figure}

Since $g_t$ has the required even symmetry, we obtain a corresponding inner non-degenerate, radially weighted homogeneous semiholomorphic polynomial
\begin{align*}
f(u,v,\overline{v})=&u^2+(v\overline{v})^3 + (v\overline{v})^2 \left(\frac{1}{2} (v^2 + \overline{v}^2)\right) - 
 v \overline{v} \left(\frac{1}{2} (v^4 + \overline{v}^4)\right)\nonumber\\
&- \rmi \left((v\overline{v})^2 \left(\frac{1}{2\rmi} (v^2 - \overline{v}^2)\right)- \frac{1}{3} \left(\frac{1}{2\rmi} (v^6 - \overline{v}^6)\right)\right).
\end{align*}
Because $v(2t)$ has argument-critical points, $f$ does not have an isolated singularity and is not strongly inner non-degenerate.

Consider now
\begin{equation*}
F(u,v,\overline{v})=f(u,v,\overline{v}) - \frac{1}{2\rmi}(v-\overline{v})^8.
\end{equation*}
The added term lies above the Newton boundary of $f$, so that $f$ and $F$ have the same principal part and in particular, $F$ is inner non-degenerate, but not strongly inner non-degenerate. Likewise, we find that it is not strongly partially non-degenerate, since
\begin{equation}
(f_u)_P(0,x)=(f_P)_u(0,x)=s_{3,f_P}(0,x)=(s_{3,f})(0,x)=0
\end{equation}
for any $x\in\mathbb{R}$ and $P=(3,1)$.

Note that any singular point $(u,v)$ of $F$ must satisfy $s_{1,F}(u,v,\overline{v})=2u=0$. We can thus find all its singular points by solving $s_{3,F}(0,v,\overline{v})=0$.

Using Mathematica we find that for $-0.01<\re(v)<0.01$ the zeros of $s_{3,F}(0,v)$ consist of the origin and two connected components, whose $\im(v)$-coordinates are parametrized by $\re(v)$ and contain $(\re(v),\im(v))=(0,-0.866025)$ and $(\re(v),\im(v))=(0,0.866025)$, respectively. Numbers are rounded to relevant digits. This shows that the origin is an isolated singularity of $F$. 
\end{ex}

This example shows that if we want to establish an equivalence between isolated singularities and strong partial non-degeneracy for semiholomorphic polynomial we have to exclude radially weighted homogeneous polynomials for which $\tfrac{\partial\arg(v_j(t))}{\partial t}$ has roots that are local maxima or minima of $\tfrac{\partial\arg(v_j(t))}{\partial t}$. We want to generalize this condition to semiholomorphic polynomials that are not radially weighted homogeneous.

We mentioned above that the first author proved in \cite{Bode2019} that an isolated singularity is equivalent to the roots of $g$ being distinct and $\tfrac{\partial\arg(v_j(t))}{\partial t}$ having no zeros. The calculation in that proof can be rewritten in terms of $s_{3,f}$ as follows. Let $f$ be a radially weighted homogeneous semiholomorphic polynomial with a weakly isolated singularity and let $g$ be as in Eq.~\eqref{resc}. As above we write $v_j(t)$ for the critical values of $g(\cdot,\rme^{\rmi t})$. We also write $c_j(t)$ for the critical points of $g(\cdot,\rme^{\rmi t})$, i.e., the roots of $\tfrac{\partial g}{\partial u}$, with $g(c_j(t),\rme^{\rmi t})=v_j(t)$. Then $f_u(r^kc_j(t),r\rme^{\rmi t})=0$ and $s_{3,f}(r^kc_j(t),r\rme^{\rmi t})$ is a positive multiple of $\tfrac{\partial\arg(v_j(t))}{\partial t}$, where $k=\tfrac{p_1}{p_2}$ is determined by the weight vector $P=(p_1,p_2)$ associated with the unique 1-face of $f$.

Therefore, if we have a semiholomorphic polynomial $f$ whose Newton boundary has several 1-faces, then the function that plays the role of $\tfrac{\partial\arg(v_j(t))}{\partial t}$ in the radially weighted homogeneous case should be something like the $P_i$-part of $s_{3,f}$, where $P_i$, $i=1,2,\ldots,N$, are the weight vectors associated with the 1-faces of $f$. Guided by this analogy we formulate the following two properties.

Let $f:\mathbb{C}^2\to \mathbb{C}$  be a semiholomorphic polynomial satisfying
\begin{itemize}
\item[(S-i)] the mixed polynomial $f_u$ is inner non-degenerate,
\item[(S-ii)] if $t_*$ is a local extremum (minimum or maximum) of $((s_{3,f})_{P_i})_i(u(t),t)$ for any $P_i$ and parametrization $(u(t),t)$\footnote{By \cite{AraujoBodeSanchez} such parametrization exists if $f_u$ is inner non-degenerate.} of the roots of $((f_u)_{P_i})_i$, then $(s_{3,f})_{P_i}(u(t_*),t_*)\neq 0$.
\end{itemize}

\begin{prop}\label{prop:strongboundary} Let $f$ be a semiholomorphic polynomial that satisfies the above conditions (S-i) and (S-ii). Then $f$ has an isolated singularity if and only if $f$ is strongly partially non-degenerate. 
\end{prop}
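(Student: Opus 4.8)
The plan is to prove the equivalence in two directions, with the forward direction (isolated singularity $\Rightarrow$ SPND) being the substantive one, since the reverse direction (SPND $\Rightarrow$ isolated singularity) is already Theorem~\ref{strong-isolated3}. For the forward direction I would argue by contrapositive: assume $f$ is semiholomorphic, satisfies (S-i) and (S-ii), and is \emph{not} strongly partially non-degenerate, and then produce a curve of singular points approaching the origin, contradicting isolatedness. So suppose SPND fails. Because $f$ is semiholomorphic, Lemma~\ref{lem:uP} lets me rewrite the defining systems \eqref{SPNDii}, \eqref{SPNDiu}, \eqref{SPNDiv}: the conditions $(s_{1,f})_P=(s_{2,f})_P=(s_{3,f})_P=0$ reduce (for each relevant $P$) to $(f_u)_P=(s_{3,f})_P=0$, and similarly for the one-variable restrictions. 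Now condition (S-i) says $f_u$ is inner non-degenerate, which by the logic of \cite{AraujoBodeSanchez} (and the discussion preceding the proposition) forbids the $(f_u)_{P_i}$ from having common zeros with its own ``inner'' obstruction on $(\C^*)^2$ or on the relevant axes — so the only way $(f_u)_P=(s_{3,f})_P=0$ can have a solution in $(\C^*)^2$ (or on an axis) is along the zero set of $(f_u)_{P_i}$ that (S-i) guarantees is parametrizable as $(u(t),t)$, i.e. as the loop of roots/critical points described right before the proposition.

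The heart of the argument is then: a failure of SPND produces a point $(u(t_*),t_*)$ with $(s_{3,f})_{P_i}(u(t_*),t_*)=0$, and I must upgrade this ``coefficient-level'' vanishing to an actual curve of singular points of $f$ near the origin. This is exactly the content of the analogy developed in the paragraphs preceding the proposition: $(s_{3,f})_{P_i}$ evaluated along the root curve plays the role of $\tfrac{\partial\arg(v_j(t))}{\partial t}$, so $(s_{3,f})_{P_i}(u(t),t)$ is (up to positive factor) the derivative of the argument of the $j$-th critical value along the loop, and its zeros are precisely the potential singular points. Here condition (S-ii) enters decisively: if $t_*$ were a local extremum of $(s_{3,f})_{P_i}(u(t),t)$, then (S-ii) says the value there is nonzero, contradiction; hence $t_*$ is a zero that is \emph{not} a local extremum, so $(s_{3,f})_{P_i}(u(t),t)$ changes sign at $t_*$, or more precisely the full function $s_{3,f}$ (via the deformation $f_i$ of $g_i$ in Eq.~\eqref{decomp}) has a genuine transverse-ish zero near $(u(t_*), r\rme^{\rmi t_*})$ for small $r$. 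Using the decomposition Eq.~\eqref{decomp} and the fact that $f_u$ is inner non-degenerate so that $f_u=0$ continues to be solvable near the root curve for small $r>0$ (this is where ``such parametrization exists'' from the footnote is used, extended off $r=0$), an implicit-function / intermediate-value argument on $s_{3,f}$ restricted to $f_u=0$ yields a one-parameter family of common zeros of $f_u$ and $s_{3,f}$ accumulating at the origin. By Eq.~\eqref{criticalpointsem} these are exactly singular points of $f$, contradicting the assumption of an isolated singularity.

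For the reverse direction, SPND $\Rightarrow$ isolated singularity is immediate from Theorem~\ref{strong-isolated3}, so nothing new is needed there; I would just cite it. I would also note that (S-i) and (S-ii) are not needed for the reverse implication — they only serve to make the failure of SPND ``visible'' as a curve of singularities, which is precisely what can fail in general (as the preceding example shows, where degenerate argument-critical points are killed by a perturbation above the Newton boundary without affecting the principal part).

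The main obstacle I anticipate is the passage from the vanishing of the \emph{$P_i$-part} $(s_{3,f})_{P_i}$ at a point of the root curve to an actual curve of zeros of the \emph{full} $s_{3,f}$ — i.e. controlling the higher-order-in-$\tau$ (equivalently, positive-$r$) corrections. The non-extremum hypothesis in (S-ii) is exactly tailored to make this work: at a non-extremal zero the leading term is sign-changing, so the perturbed function $f_i(u(t),t,r,\cdot)-g_i$ (which is $O(r^{\text{positive}})$ by Eq.~\eqref{decomp}) cannot destroy the zero, only move it slightly, and one gets a curve by the intermediate value theorem combined with the curve selection lemma run in reverse. Making this rigorous requires care about whether the root curve $u(t)$ of $(f_u)_{P_i}$ deforms smoothly for $r>0$ (guaranteed by inner non-degeneracy of $f_u$, i.e. (S-i)) and about the interaction between the several weight vectors $P_i$ — one should check the argument at each $P_i$ separately and also handle the ``between faces'' weight vectors, for which $(s_{3,f})_P$ and $(f_u)_P$ are single monomials and hence have no zeros on $(\C^*)^2$, together with the axis cases via \eqref{SPNDiu}, \eqref{SPNDiv} exactly as in the proof of Theorem~\ref{strong-isolated3}.
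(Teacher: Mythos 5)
Your proposal is correct and takes essentially the same route as the paper, merely run as a contrapositive: the paper assumes the singularity is isolated, deduces from the intermediate value theorem and the decomposition~\eqref{decomp} that $(s_{3,f})_i$ along the root curve guaranteed by (S-i) is sign-definite for small $r>0$, hence weakly sign-definite at $r=0$, so any zero would be a local extremum that (S-ii) forbids. Your version (failure of SPND gives, via Lemma~\ref{lem:uP}, a non-extremal zero of $(s_{3,f})_{P_i}$ on the root curve, which by (S-ii) must be a sign change, which IVT then propagates to $r>0$ as a curve of singular points) is the same computation read in reverse, with the same handling of vertex and axis weight vectors.
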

\begin{proof}
($\Leftarrow$) Follows from Proposition~\ref{strong-isolated3}.
\vspace{0.3cm}

($\Rightarrow$) As usual we denote the weight vectors associated with the 1-face of the Newton boundary of $f$ by $P_i$, $i=1,2,\ldots,N$. First, consider a weight vector $P$ that is different from all $P_i$, so that its corresponding face is a vertex $\Delta$ of the Newton boundary. Since $f$ is semiholomorphic, it is of the form $f_P(u,\rme^{\rmi t},r\rme^{-\rmi t})=f_{\Delta}(u,r\rme^{\rmi t},r\rme^{-\rmi t})=u^\mu r^\nu\Phi(\rme^{\rmi t})$ for some trigonometric polynomial $\Phi(\rme^{\rmi t})$ and natural numbers $\mu,\nu$. Then the inner non-degeneracy of $f_u$ implies that $\Phi$ has no zeros, which implies that $(f_u)_P$ has no zeros in $(\C^*)^2$ and $(f_u(0,v))_P$ has no zeros in $(\C^*)^2$, so that $f$ satisfies SPND-(i) and SPND-(ii) for every weight vector $P$ that is not $P_i$ for some $i$.

For the remaining weight vectors $P_i$ consider the decompositions of $f_u$ and $s_{3,f}$ associated with $P_i$ as in Eq.~\eqref{decomp}
\begin{align*}
f_u(u,r\rme^{\rmi t},r\rme^{-\rmi t}) &=r^{\tfrac{d(P_i;f_u)}{p_{i,2}}} (f_u)_i \left( \frac{u}{r^{k_i}} ,r,t \right)  \\
s_{3,f}(u,r\rme^{\rmi t},r\rme^{-\rmi t}) &=r^{\tfrac{d(P_i;s_{3,f})}{p_{i,2}}} (s_{3,f})_i \left( \frac{u}{r^{k_i}} ,r,t \right). 
\end{align*}  
Since $f_u$ is inner Newton non-degenerate, we can find a parametrization of the roots of $f_u$, $(r^{k_i}u(r,t),r\rme^{\rmi t})$, where $(u(r,t),r\rme^{\rmi t})$ is a root of $(f_u)_i$, $r<r_0$ and $t \in [0,2\pi]$, $r_0>0$ small enough, by \cite[Proposition 4.5]{AraujoBodeSanchez}. Note that $\lim_{r\to 0} u(r,t)=u(0,t)$ is the $u$-coordinate of a root of the family of univariate polynomials $((f_u)_{P_i})_i(\cdot,0,t)$.
We know that one inequality  
\begin{align}
&s_{3,f}(r^ku(r,t),r\rme^{\rmi t})=r^{\tfrac{d(P_i;s_{3,f})}{p_{i,2}}} (s_{3,f})_i ( u (r,t) ,r,t ) >0 \text{ or }  \label{semeq1suf}\ \\   &s_{3,f}(r^ku (r,t),r\rme^{\rmi t},r\rme^{-\rmi t})=r^{\tfrac{d(P_i;s_{3,f})}{p_{i,2}}} (f_u)_i (u(r,t) ,r,t ) <0 \label{semeq2suf}
\end{align}
holds for $(r,t) \in (0,r_0)\times[0,2\pi]$, with $r_0>0$ small enough. Otherwise the Intermediate Value Theorem gives us a critical point of $f$ arbitrarily close to the origin. Therefore, on the limit points $r=0$, 
$$(s_{3,f})_i(u (0,t),0,t)= ((s_{3,f})_{P_i})_i(u (0,t),0,t) \geq 0$$
or 
$$(s_{3,f})_i(u (0,t),0,t)= ((s_{3,f})_{P_i})_i(u (0,t),0,t) \leq 0$$ and by the condition (S-ii) $(u(0,t),0,t)$ is not a common solution of  $((f_u)_{P_i})_i$  and $((s_{3,f})_{P_i})_i$ for any $t$. Therefore, $f$ is strongly partially non-degenerate for $P_i$. 
\end{proof}
We know from Theorem~\ref{strong-isolated3} that every strongly partially non-degenerate semiholomorphic polynomial has an isolated singularity. Proposition~\ref{prop:strongboundary} tells us how far this condition is from being a complete characterisation of semiholomorphic polynomials with isolated singularity. It follows that a semiholomorphic polynomial that has an isolated singularity, but that is not strongly partially non-degenerate, does not satisfy (S-i) or (S-ii). In the following we present a new perspective on condition (S-ii), which should make it easier to understand.
\vspace{0.3cm}

Let $f$ be a semiholomorphic polynomial that is not a radially weighted homogeneous polynomial and has a unique 1-face, i.e., $f \neq f_{\Gamma}=f_P$, for some weight vector $P$. Consider the critical values  $v_j(t),\, j=1,2,\dots,\ell$, of  $g_1(\cdot,\rme^{\rmi t})$, where $g_1$ is the g-polynomial associated to $P=P_1$. As in the radially weighted homogeneous case, we can (under small additional assumptions) show that $((s_{3,f})_{P_1})_1(r^kc_j(t),t)$ is a positive multiple of $\tfrac{\partial\arg(v_j(t))}{\partial t}$. In particular, it is 0 if and only if $\tfrac{\partial\arg(v_j(t))}{\partial t}=0$ and always has the same sign as $\tfrac{\partial\arg(v_j(t))}{\partial t}$. Then (S-ii) says that for every $t$ where $\frac{\partial \arg(v_j(t))}{\partial t}=0$ we have that 
$\frac{\partial^2\arg(v_j(t))}{\partial t}\neq0$. Figure~\eqref{suf1} shows segments of the graph of a function $\tfrac{\partial \arg(v_j(t))}{\partial t}$ that does not have this property. At $t=t_1$ the function has a zero with $\frac{\partial^2\arg v_j}{dt}(t_1)=0$, while at $t=t_2$ the function evaluates to zero with  $\frac{\partial^2\arg v_j}{dt}(t_2)\neq0$. These two cases correspond to a tangential and a transverse intersection of the graph with the $t$-axis, respectively. 
\begin{figure}[H]\label{suf1}
    \centering
    \includegraphics[height=4.0cm]{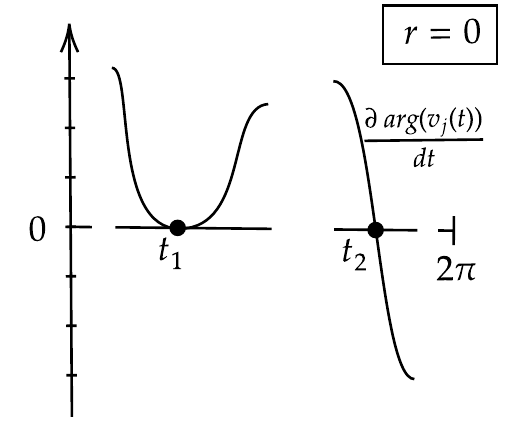}
    \caption{Parts of an example graph of $\frac{\partial \arg(v_j(t))}{dt}$.}
    \label{suf1}
\end{figure}
 \begin{figure}[H]
\begin{subfigure}[b]{.5\textwidth}
  \centering
  \includegraphics[height=0.5cm,angle=0,width=0.8\linewidth]{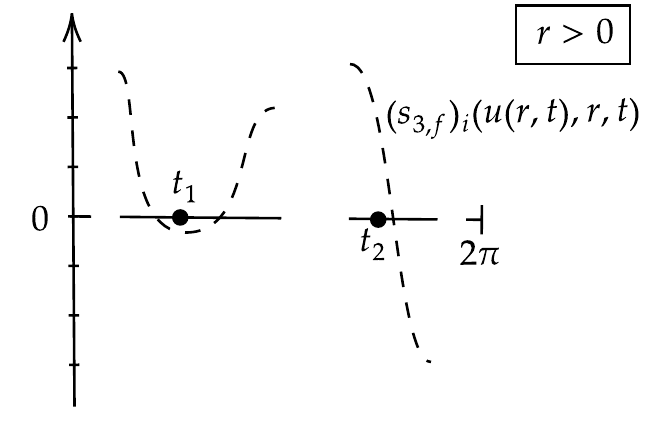}
  \caption{}
  \label{suf2}
\end{subfigure}
\begin{subfigure}[b]{.5\textwidth}
  \centering
  \includegraphics[height=0.5cm,angle=0,width=0.8\linewidth]{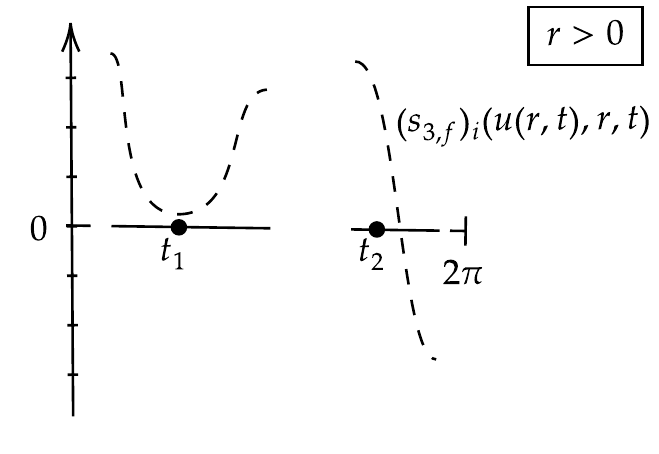}
  \caption{}
  \label{suf3}
\end{subfigure}
  \caption{Possible graphs of $(s_{3,f})_i(u(r,t), r,t)$ as functions of $t$ for a fixed small positive value of $r$.}
\label{suf}
\end{figure}
We know that as $r$ approaches zero $(s_{3,f})_1(u(r,t),r,t)$ converges to the function $((s_{3,f})_{P_1})_1(u(0,t),t)=((s_{3,f})_{P_1})_1(r^kc_j(t),t)$, which is a positive multiple of $\tfrac{\partial\arg(v_j(t))}{\partial t}$.  Therefore, for any fixed small positive value of $r$ the function $(s_{3,f})_1(u(r,t),r,t)$ still has a root in a neighborhood of $t=t_2$ and this root still arises as a transverse intersection of the graph of the function and the $t$-axis. In a neighborhood of $t=t_1$ however, the function might have two (or more) zeros as depicted in Figure~\eqref{suf2} or no zeros at all as in Figure~\eqref{suf3}. In other words, the only roots of $\partial\arg(v_j(t))/\partial t$ that do not necessarily correspond to roots of $(s_{3,f})_i$ are roots that are also local maxima or minima.

\vspace{0.3cm}
Note that since $f$ is semiholomorphic, the roots of $s_{3,f}(r^{k}u(r,t),r\rme^{\rmi t})$ correspond exactly to the critical points of $f$. Furthermore, the roots of the function $s_{3,f}(r^{k}u(r,t),r\rme^{\rmi t})$ also directly correspond to the roots of $(s_{3,f})_1(u(r,t),r,t)$. Therefore, Proposition~\ref{prop:strongboundary} guarantees that if $f$ has an isolated singularity, then $f_P$ cannot admit singularities like the one corresponding to $t=t_2$ in the example. In other words, this type of singularity at $t_2$ associated to $f_P$ implies a non-isolatedness of the singularity at the origin. Thus we can see Proposition~\ref{prop:strongboundary} as a criterion not just to say when $f$ has an isolated singularity but also when $f$ has a non-isolated singularity at the origin.  

\vspace{0.3cm}
Generalizing Theorem~\ref{prop:strongboundary} to the general mixed case comes with a difficulty. The singular set of a mixed function is defined by 3 equations, $s_{1,f},s_{2,f}$ and $s_{3,f}$, while the singular set of a semiholomorphic function only requires two (see Eq.~\eqref{criticalpointsem}). This forces us to impose an extra condition on the face functions of partial derivatives of $f$ as follows.

\vspace{0.3cm}

Let $f$ be a mixed polynomial. Let $P_i$, $i=1,2,\ldots,N$, denote the weight vectors associated with the 1-faces of $\Gamma_{s_{1,f}}$. Consider the following conditions.
\begin{itemize}
\item[(M-i)] The mixed polynomial $s_{1,f}$ is nice and inner Newton non-degenerate, see \cite{AraujoBodeSanchez}.   
\item[(M-ii)] For any $P_i$, $i\in\{1,2,\ldots,N\}$, $\tau_0 \in [0,2\pi]$ with $((s_{j,f})_{P_i})_i(u(\tau_0), t(\tau_0))=0, \ j=1,2,$ the point $ (u(\tau_0), t(\tau_0))$ is not a local extremum of the function $((s_{j,f})_{P_i})_i(u(\tau), t(\tau))$, $j=1,2,$ where $(u(\tau), t(\tau))\ $ is a local parametrization of the roots of $((s_{1,f})_{P_i})_i$.\footnote{By \cite{AraujoBodeSanchez} such parametrization exists if $s_{1,f}$ is inner non-degenerate and has a nice Newton boundary.}
\item[(M-iii)] For any $P_i$, $i\in\{1,2,\ldots,N\}$, the systems $$(f_v(u,0))_{P_i}=(f_{\bar{v}}(u,0))_{P_i} =(f_u(u,0))_{P_i}=(f_{\bar{u}}(u,0))_{P_i}=0,$$ $$(f_v(0,v))_{P_i}=(f_{\bar{v}}(0,v))_{P_i} =(f_u(0,v))_{P_i}=(f_{\bar{u}}(0,v))_{P_i}=0$$ and $$(f_v)_{P_i}=(f_{\bar{v}})_{P_i} =(f_u)_{P_i}=(f_{\bar{u}})_{P_i}=0$$  have no solution in $(\C^*)^2$. 
\end{itemize}
\begin{prop}\label{prop:strongboundary2} Let $f$ be a mixed polynomial then satisfies the conditions (M-i), (M-ii) and (M-iii). Then $f$ has an isolated singularity at the origin if and only if $f$ is strongly partially non-degenerate. 
\end{prop}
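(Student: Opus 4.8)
The plan is to prove the non-trivial implication; the implication ``$f$ strongly partially non-degenerate $\Rightarrow$ $f$ has an isolated singularity'' is already contained in Theorem~\ref{strong-isolated3}. So I would assume that $f$ has an isolated singularity at the origin and, arguing by contradiction, suppose that $f$ is not strongly partially non-degenerate. By Definition~\ref{Newtoncondstrong} there is then a positive weight vector $P$ and a point $p^{*}$ --- lying in $(\C^{*})^{2}$ when SPND-(ii) fails, or on one of the coordinate axes when SPND-(i) fails --- at which the face functions $(s_{1,f})_{P}$, $(s_{2,f})_{P}$, $(s_{3,f})_{P}$ (or their axis restrictions) vanish simultaneously. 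The aim is to upgrade $p^{*}$ to a genuine real analytic arc of critical points of $f$ emanating from the origin, contradicting isolatedness. This follows the architecture of the proof of Proposition~\ref{prop:strongboundary}, but with $s_{1,f}$ now playing the role that $f_{u}$ played in the semiholomorphic case.

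The first step would be to reduce to the case where $P=P_{i}$ is one of the weight vectors associated with the compact $1$-faces of $\Gamma_{s_{1,f}}$. For a $P$ whose $\Gamma_{s_{1,f}}$-face is a vertex, $(s_{1,f})_{P}$ is supported at a single lattice point, and expressing $(s_{1,f})_{P},(s_{2,f})_{P},(s_{3,f})_{P}$ through the surviving face functions $(f_{u})_{P},(f_{\bar u})_{P},(f_{v})_{P},(f_{\bar v})_{P}$ of the partial derivatives --- the same case distinction as in \eqref{eqweak1}--\eqref{eqweak3}, controlled by \cite[Lemma~3.5]{AraujoBodeSanchez} --- one sees, using condition (M-iii), that such a $P$ cannot witness a failure of SPND. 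The same analysis applied to the univariate face functions in Definition~\ref{Newtoncondstrong}(i) and to the axis systems of (M-iii) handles the case in which $p^{*}$ lies on a coordinate axis. This part is combinatorially heavy but routine once (M-iii) is in place.

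It then remains to rule out a failure of SPND-(ii) at some $P=P_{i}$. Since $s_{1,f}$ is nice and inner Newton non-degenerate by (M-i), the root-parametrization result of \cite[Proposition~4.5]{AraujoBodeSanchez} furnishes a branch of $V_{s_{1,f}}$ approaching the origin: a family $z(r,\tau)$ with $s_{1,f}(z(r,\tau))\equiv 0$ whose rescaled limit as $r\to 0$ traces the roots of $((s_{1,f})_{P_{i}})_{i}$ and, in particular, passes through $p^{*}$ at some parameter value $\tau_{*}$. Since the origin is an isolated singularity, $z(r,\tau)$ is not a critical point of $f$ for $r>0$ small, so --- $s_{1,f}$ already vanishing along the branch --- the pair $\bigl(s_{2,f}(z(r,\tau)),\, s_{3,f}(z(r,\tau))\bigr)$ is never $(0,0)$. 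Running the Intermediate Value Theorem argument of Proposition~\ref{prop:strongboundary} componentwise then shows that, near $\tau_{*}$, the surviving real face function among $(s_{2,f})_{P_{i}},(s_{3,f})_{P_{i}}$ cannot change sign along the root-parametrization of $((s_{1,f})_{P_{i}})_{i}$ (otherwise a transverse zero at a fixed small $r$, together with the remaining equation controlled as in the vertex step, yields a critical point of $f$ arbitrarily close to the origin). Hence $\tau_{*}$ is a local extremum of that face function along the parametrization, which is exactly what (M-ii) excludes; this contradiction finishes the argument.

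I expect the last step to be the main obstacle. In the semiholomorphic setting of Proposition~\ref{prop:strongboundary} the singular locus is cut out by only the two equations $f_{u}=0$, $s_{3,f}=0$, and a single sign inequality along the branch (cf.\ \eqref{semeq1suf}--\eqref{semeq2suf}) suffices; here one must track \emph{both} $s_{2,f}$ and $s_{3,f}$ along the one-parameter branch of $V_{s_{1,f}}$ and argue that the would-be common zero is seen either as a transverse crossing --- producing a nearby critical point --- or as a forbidden local extremum, which forces a rather careful joint use of (M-ii) and (M-iii). A secondary technical point is checking that \cite[Lemma~3.5]{AraujoBodeSanchez} really does apply in each of the combinatorial cases, i.e., that the $1$-face structure of $\Gamma_{s_{1,f}}$ is compatible with the semiholomorphy type of the relevant $f_{P}$.
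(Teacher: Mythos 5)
Your broad strategy is the right one --- build an arc of critical points of $f$ out of a hypothetical common zero $p^{*}$ of the face functions of $s_{1,f},s_{2,f},s_{3,f}$, using the root-parametrization of $V_{s_{1,f}}$ coming from (M-i), and play this against the isolatedness of the singularity --- and you correctly flag that the hard part is tracking both $s_{2,f}$ and $s_{3,f}$ along the branch. However the sketch does not resolve that difficulty, and there are two concrete gaps.

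First, the reduction to $P=P_{i}$ is handled by the wrong hypothesis. For a weight vector $P$ whose $\Gamma_{s_{1,f}}$-face is a non-extreme vertex, the paper uses the \emph{niceness} of $s_{1,f}$ (part of (M-i)) to conclude directly that $(s_{1,f})_{P}$ has no zero in $(\C^{*})^{2}$; no appeal to (M-iii) is needed or even available, since (M-iii) is stated only for the $P_{i}$ attached to $1$-faces of $\Gamma_{s_{1,f}}$. For weight vectors $P$ with $\tfrac{p_{1}}{p_{2}}>k_{1}$ or $<k_{N}$ (extreme vertices and the axis directions relevant to SPND-(i)), the paper's Case~2 runs the Case~1 argument on roots of $(s_{1,f})_{1}$ in $\C^{2}\setminus\{v=0\}$ and then compares, via a $v$-convenience case analysis on each $s_{j,f}$ separately, the face functions $(s_{j,f})_{P_{1}}(0,v)$, $(s_{j,f}(0,v))_{P}$ and $(s_{j,f})_{P}$; this is where inner non-degeneracy of $s_{1,f}$ does the work, not (M-iii). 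Your ``combinatorially heavy but routine once (M-iii) is in place'' claim therefore does not go through as stated.

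Second, and more importantly, your treatment of Case~1 contains the unjustified claim that ``a transverse zero at a fixed small $r$ \dots yields a critical point of $f$.'' A zero of $s_{2,f}$ along the branch (where $s_{1,f}\equiv 0$) does not by itself produce a critical point unless $s_{3,f}$ also vanishes there. The missing ingredient is the algebraic identity that closes this gap: if $s_{1,f}(z)=s_{2,f}(z)=0$ and $f_{u}(z)\neq 0$, then necessarily $s_{3,f}(z)=0$ (and symmetrically with the roles of $s_{2,f},s_{3,f}$ and $u,v$ swapped). Hence, along the $V_{s_{1,f}}$-branch, a zero of $s_{2,f}$ that is not a critical point of $f$ must satisfy $f_{u}=f_{\bar u}=0$, and a zero of $s_{3,f}$ that is not a critical point must satisfy $f_{v}=f_{\bar v}=0$. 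In the subcase where none of the four inequalities \eqref{eq1suf}--\eqref{eq4suf} persist, the Intermediate Value Theorem therefore produces two sequences converging to $(u(0,\tau_{0}),0,t(0,\tau_{0}))$, one annihilating $(f_{u})_{i},(f_{\bar u})_{i}$ and the other annihilating $(f_{v})_{i},(f_{\bar v})_{i}$; passing to the limit yields a common zero of all four face functions $(f_{u})_{P_{i}},(f_{\bar u})_{P_{i}},(f_{v})_{P_{i}},(f_{\bar v})_{P_{i}}$ in $(\C^{*})^{2}$, which is exactly what (M-iii) forbids. In the complementary subcase where one of those inequalities does hold on a punctured neighborhood, the limit function $((s_{j,f})_{P_{i}})_{i}$ is of one sign near $\tau_{0}$, so a zero at $\tau_{0}$ would be a local extremum, ruled out by (M-ii). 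Your sketch gestures at this dichotomy but, as written, conflates the two subcases and never introduces the $s_{1}=s_{2}=0,\ f_{u}\neq 0\Rightarrow s_{3}=0$ observation that makes (M-iii) usable.
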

\begin{proof}
($\Leftarrow$) Follows directly from Proposition~\ref{strong-isolated3}.

\vspace{0.3cm}
($\Rightarrow$) 
Let $P$ be a weight vector such that the corresponding face in $\Gamma_{s_{1,f}}$ is a non-extreme vertex. Since $s_{1,f}$ has a nice Newton boundary, it follows that $(s_{1,f})_P$ has no roots in $(\C^*)^2$. Therefore, $f$ satisfies SPND-(ii) for $P$. It remains to prove SPND-(ii) for weight vectors $P$ that are associated with a 1-face or an extreme vertex of $\Gamma_{s_{1,f}}$ and to prove SNPD-(i). We split the proof in two cases, the first discusses $P$ associated with a 1-face and the second contains the case of extreme vertex as well as the proof of the condition SPND-(i).

We index the 1-faces of $\Gamma_{s_{1,f}}$ by $i\in\{1,2,\ldots,N\}$ with weight vectors $P_i$ as usual. Let $P_i=(p_{i,1},p_{i,2})$ and $k_i=\tfrac{p_{i,1}}{p_{i,2}}$.\\

 \noindent \underline{Case 1: $P$ corresponds to a 1-face of $\Gamma_{s_{1,f}}$}\\
  
In this case, $P=P_i$ for some $i\in\{1,2,\ldots,N\}$.
 
 Consider the decomposition from Eq.~\eqref{decomp} of each $s_{1,f}$, $s_{2,f}$ and $s_{3,f}$ associated to $P_i$, i.e.,  
\begin{align*}
s_{1,f}(u,r\rme^{\rmi t}) &=r^{\tfrac{d(P_i;s_{1,f})}{p_{i,2}}} (s_{1,f})_i \left( \frac{u}{r^{k_i}} ,r,t \right)  \\
s_{2,f}(u,r\rme^{\rmi t}) &=r^{\tfrac{d(P_i;s_{2,f})}{p_{i,2}}} (s_{2,f})_i \left( \frac{u}{r^{k_i}} ,r,t \right)  \\
s_{3,f}(u,r\rme^{\rmi t}) &=r^{\tfrac{d(P_i;s_{3,f})}{p_{i,2}}} (s_{3,f})_i \left( \frac{u}{r^{k_i}} ,r,t \right) 
\end{align*}  
Since $s_{1,f}$ is nice and inner non-degenerate, we can find a parametrization $(r^{k_i}u(r,\tau),r\rme^{\rmi t(r,\tau)})\in \C^{*2}$ of the roots of $(s_{1,f})$, where $(u(r,\tau),r\rme^{\rmi t(r,\tau)})$ is a root of $(s_{1,f})_i$ (cf. \cite[Theorem 4.6]{AraujoBodeSanchez}). Note that $\lim_{r\to 0} (u(r,\tau),r, t(r,\tau))=(u(0,\tau),0,t(0,\tau))\in \C^{*2}$ is a root of $((s_{1,f})_{P_i})_i$. In particular, the roots of $(s_{1,f})_{P_i}$ are given by $(r^{k_i}u(0,\tau),r\rme^{\rmi t(0,\tau)})$.

Suppose that some inequality of 
\begin{align}
&s_{2,f}(r^{k_i}u(r,\tau),r\rme^{\rmi t(r,\tau)})=r^{\tfrac{d(P_i;s_{2,f})}{p_{i,2}}} (s_{2,f})_i ( u(r,\tau),r, t(r,\tau) ) >0 \label{eq1suf}\\ 
  &s_{2,f}(r^{k_i}u(r,\tau),r\rme^{\rmi t(r,\tau)})=r^{\tfrac{d(P_i;s_{2,f})}{p_{i,2}}} (s_{2,f})_i (u (r,\tau) ,r,t(r,\tau) ) <0  \label{eq2suf}\ \\
&s_{3,f}(r^{k_i}u(r,\tau),r\rme^{\rmi t(r,\tau)})=r^{\tfrac{d(P_i;s_{3,f})}{p_{i,2}}} (s_{3,f})_i ( u (r,\tau) ,r,t(r,\tau) ) >0  \label{eq3suf}\ \\   
&s_{3,f}(r^{k_i}u(r,\tau),r\rme^{\rmi t(r,\tau)})=r^{\tfrac{d(P_i;s_{3,f})}{p_{i,2}}} (s_{3,f})_i (u (r,\tau) ,r,t(r,\tau)) <0  \label{eq4suf}
\end{align}
holds for $(r,\tau) \in (0,r_0)\times(\tau_0-\epsilon, \tau_0+\epsilon)$, with $r_0>0$ and $\epsilon>0$ small enough, for instance Eq.~\eqref{eq1suf}. By condition (M-ii) we obtain on the limit point $r=0$ that 
$$(s_{2,f})_i(u(0,\tau),0,t(0,\tau))= ((s_{2,f})_{P_i})_i(u(0,\tau),0,t(0,\tau)) > 0.$$ Therefore $(u(0,\tau),0,t(0,\tau))$ is not a common zero of  $((s_{1,f})_{P_i})_i,\ ((s_{2,f})_{P_i})_i$ and $((s_{3,f})_{P_i})_i$ for any $\tau \in (\tau_0-\epsilon,\tau_0+\epsilon)$. Since at $r=0$ these functions are equal to the $g$-polynomial associated with $(s_{1,f})_{P_i}$, $(s_{2,f})_{P_i}$ and $(s_{3,f})_{P_i}$, respectively, we find that $(u(0,\tau),\rme^{\rmi t(0,\tau)})$ is not a common zero of these $g$-polynomials, which by definition means that $(r^{k_i}u(0,\tau),r\rme^{\rmi t(0,\tau)})$ is not a common root of $(s_{1,f})_{P_i}$, $(s_{2,f})_{P_i}$ and $(s_{3,f})_{P_i}$. But since these are by construction all the roots of $(s_{1,f})_{P_i}$, there are no common zeros at all.

On the other hand, suppose that there are no $r_0,\epsilon>0$ small enough such that some inequality \eqref{eq1suf}-\eqref{eq4suf} holds. Then by the Intermediate Value Theorem there are for every $r>0$, $\varepsilon>0$, tuples $(r_1,\tau_1’)$ and $(r_1’,\tau_1’)$ with $r_1,r_1'<r$ and $\tau_1,\tau_1'\in[\tau_0-\varepsilon,\tau_0+\varepsilon]$ such that 
\begin{equation}\label{eq1contraditioniso}
s_{1,f}(r_1^{k_i}u (r_1,\tau_1),r_1\rme^{\rmi t(r_1,\tau_1)})=s_{2,f}(r_1^{k_i}u  (r_1,\tau_1),r_1\rme^{\rmi t(r_1,\tau_1)})=0, 
\end{equation}
with $$f_u (r_1^{k_i}u (r_1,\tau_1),r_1\rme^{\rmi t(r_1,\tau_1)})=f_{\bar{u}}(r_1^{k_i}u (r_1,\tau_1),r_1\rme^{\rmi t(r_1,\tau_1)})=0$$ and  
\begin{equation}\label{eq2contraditioniso}
s_{1,f}((r’_1)^{k_i}u(r’_1,\tau’_1),r’_1\rme^{\rmi t(r’_1,\tau’_1)})=s_{3,f}((r’_1)^{k_i}u(r’_1,\tau’_1),r’_1\rme^{\rmi t(r’_1,\tau’_1)})=0, 
\end{equation}
 with $$f_v ((r’_1)^{k_i}u(r’_1,\tau’_1),r’_1\rme^{\rmi t(r’_1,\tau’_1)})=f_{\bar{v}}((r’_1)^{k_i}u(r’_1,\tau’_1),r’_1\rme^{\rmi t(r’_1,\tau’_1)})=0.$$
Otherwise, Eq.~\eqref{eq1contraditioniso} and Eq.~\eqref{eq2contraditioniso} imply $s_{3,f}(r_1^{k_i}u (r_1,\tau_1),r_1\rme^{\rmi t(r_1,\tau_1)})=0$ and $s_{2,f}((r’_1)^{k_i}u(r’_1,\tau’_1),r’_1\rme^{\rmi t(r’_1,\tau’_1)})=0$, respectively,  either of which is a contradiction to the isolatedness  at the origin. Indeed, suppose that $f_u (u_*,v_*)\neq0$. Then $s_{2,f}(u_*,v_*)=0$ implies $|f_u (u_*,v_*)/ \bar{f}_{\bar{u}}(u_*,v_*)|=1$ and $s_{1,f}(u_*,v_*)=0$ implies  $f_v(u_*,v_*)=(f_u (u_*,v_*)/ \bar{f}_{\bar{u}}(u_*,v_*))\cdot \bar{f}_{\bar{v}}(u_*,v_*)$, which is equivalent in this case to $s_{3,f}(u_*,v_*)=0$. 
 \vspace{0.3cm}
 
 Thus we can construct sequences $$(r_n^{k_i}u (r_n,\tau_n),r_n\rme^{\rmi t(r_n,\tau_n)}) \text{ and } ((r’_n)^{k_i}u(r’_n,\tau’_n),r’_n\rme^{\rmi t(r’_n,\tau’_n)}),$$ with sequences $(r_n,\tau_n)$ and $(r’_n,\tau’_n)$ $\in (0,1)\times (\tau_0-\epsilon,\tau_0+\epsilon)$ converging to $(0,\tau_0)$ and such that 
 $$f_u(r_n^{k_i}u (r_n,\tau_n),r_n\rme^{\rmi t(r_n,\tau_n)})=r_n^{\tfrac{d(P_i;f_u)}{p_{i,2}}}(f_u)_i(u(r_n,\tau_n),r_n,t(r_n,\tau_n))=0,$$
 $$f_{\bar{u}}(r_n^{k_i}u (r_n,\tau_n),r_n\rme^{\rmi t(r_n,\tau_n)})=r_n^{\tfrac{d(P_i;f_{\bar{v}})}{p_{i,2}}}(f_{\bar{u}})_i(u(r_n,\tau_n),r_n,t(r_n,\tau_n))=0 $$  and 
 $$f_v((r’_n)^{k_i}u(r’_n,\tau’_n),r’_n\rme^{\rmi t(r’_n,\tau’_n)})= (r’_n)^{\tfrac{d(P_i;f_u)}{p_{i,2}}}(f_v)_i(u(r’_n,\tau’_n),r’_n, t(r’_n,\tau’_n))=0, $$
$$ f_{\bar{v}}((r’_n)^{k_i}u(r’_n,\tau’_n),r’_n\rme^{\rmi t(r’_n,\tau’_n)})= (r_n)^{\tfrac{d(P_i;f_{\bar{u}})}{p_{i,2}}}(f_{\bar{v}})_i(u(r’_n,\tau’_n),r’_n, t(r’_n,\tau’_n))=0 .$$ 
 Then  
\begin{align*}
(f_u)_i(u(r_n,&\tau_n),r_n,t(r_n,\tau_n))=(f_{\bar{u}})_i(u(r_n,\tau_n),r_n,t(r_n,\tau_n))=\\&=(f_v)_i(u(r’_n,\tau’_n),r’_n, t(r’_n,\tau’_n))=(f_{\bar{v}})_i(u(r’_n,\tau’_n),r’_n, t(r’_n,\tau’_n))=0.  
\end{align*}
Therefore, taking the limit $(r_n,\tau_n) \to (0,\tau_0)$ and $(r’_n,\tau’_n)\to (0,\tau_0)$, we have 
 $$((f_u)_{P_i})_i(u(0,\tau_0),0,t(0,\tau_0))=((f_{\bar{u}})_{P_i})_i(u(0,\tau_0),0,t(0,\tau_0))=0$$  and $$((f_v)_{P_i})_i(u(0,\tau_0),0,t(0,\tau_0))=((f_{\bar{v}})_{P_i})_i(u(0,\tau_0),0,t(0,\tau_0))=0,$$
  which is a contradiction to (M-iii). Therefore, $(s_{1,f})_{P_i},\ (s_{2,f})_{P_i}$ and  $(s_{3,f})_{P_i}$ have no common zeros. Thus $f$ satisfies SPND-(ii).\\

 \noindent \underline{Case 2: $P=\tfrac{p_{1}}{p_{2}}$ with $\tfrac{p_1}{p_2}>k_1$ or $\tfrac{p_1}{p_2}<k_N$}.  
 
\noindent  We are going to prove that $f$ satisfies SPND-(ii) for $P$ with $\frac{p_{1}}{p_{2}} >k_1$ and also that 
\begin{equation}\label{equivtheoremeq0}
(s_{1,f}(0,v))_P=(s_{2,f}(0,v))_P=(s_{3,f}(0,v))_P=0
\end{equation}
has no solution in $(\C^*)^2$. The other cases are very similar. 

Since $s_{1,f}$ is IND, we may apply the same arguments as in Case 1 to a parametrization of the zeros of $(s_{1,f})_1$ in $\C^2\setminus \{v=0\}$. It follows that $(s_{1,f})_{P_i}=(s_{2,f})_{P_i}=(s_{3,f})_{P_i}=0$ has no solution in $\C^2\setminus \{v=0\}$ for any $i\in\{1,2,\ldots,N\}$. In particular,
\begin{equation}\label{equivtheoremeq1}
(s_{1,f})_{P_1}(0,v)=(s_{2,f})_{P_1}(0,v)=(s_{3,f})_{P_1}(0,v)=0
\end{equation}
 has no solution for any $v\in \C^*$. 
 It is clear that 
 \begin{equation}\label{equivtheoremeq2}
  (s_{1,f})_{P_1}(0,v)=
 \begin{cases}
  (s_{1,f}(0,v))_{P_1} & \text{if $s_{1,f}$ is $v$-convenient }\\ 
   \ \ \ \ \ \ 0                 & \text{otherwise.}
 \end{cases}
\end{equation}
For $s_{j,f},\,j=2,3$, we denote by $k_{1,j}$ the number associated to the first 1-face of $\Gamma_{s_{2,f}}$ and $\Gamma_{s_{3,f}}$, respectively. That is if $P_i'$, $i\in\{1,2,\ldots,N'\}$, and $P_i''$, $i\in\{1,2,\ldots,N''\}$, are the weight vectors corresponding to the 1-faces of $s_{2,f}$ and $s_{3,f}$, respectively, with $P_1'=(p_1',p_2')$ and $P_1''=(p_1'',p_2'')$, then $k_{1,2}=\tfrac{p_1'}{p_2'}$ and $k_{1,3}=\tfrac{p_1''}{p_2''}$. Thus,
 \begin{equation}\label{equivtheoremeq3}
  (s_{2,f})_{P_1}(0,v)=
 \begin{cases}
  (s_{2,f}(0,v))_{P_1} & \text{if $k_{1} \geq  k_{1,2}$}\\ 
   \ \ \ \ \ \ 0                 & \text{otherwise,}
 \end{cases}
\end{equation}
and 
 \begin{equation}\label{equivtheoremeq4}
  (s_{3,f})_{P_1}(0,v)=
 \begin{cases}
  (s_{3,f}(0,v))_{P_1}  & \text{if $k_{1} \geq  k_{1,3}$}\\ 
   \ \ \ \ \ \ 0                 & \text{otherwise.}
 \end{cases}
\end{equation}
Since system~\eqref{equivtheoremeq1} does not have solutions with $v\in \C^*$, it follows from Eqs.\eqref{equivtheoremeq2}-\eqref{equivtheoremeq4} that a lack of solution of  \eqref{equivtheoremeq1} implies a lack of solution of 
\begin{equation}\label{eq:another}
(s_{1,f}(0,v))_{P_1}=(s_{2,f}(0,v))_{P_1}=(s_{3,f}(0,v))_{P_1}=0
\end{equation} 
in $(\C^*)^2$.

More precisely, since there are no solutions to Eq.~\eqref{equivtheoremeq1}, not all of $(s_{1,f})_{P_1}(0,v)$, $(s_{2,f})_{P_1}(0,v)$ and $(s_{3,f})_{P_1}(0,v)$ are constant 0. Let $J\subset\{1,2,3\}$ be the set of indices $j$ with $(s_{j,f})_{P_1}(0,v)\not\equiv0$. Then the lack of solutions to Eq.~\eqref{equivtheoremeq1} implies that there are no common zeros of $(s_{j,f})_{P_1}(0,v)=(s_{j,f}(0,v))_{P_1}$, $j\in J$, and in particular no solution to Eq.~\eqref{eq:another} in $(\C^*)^2$.

Since for any $P$ and any $i=1,2,3$, we have $(s_{i,f}(0,v))_{P_1}=(s_{i,f}(0,v))_{P}$, it follows that the system \eqref{equivtheoremeq0} has no solution in $(\C^{*})^2$. Making analogous arguments we get that for any $P$
\begin{equation}
(s_{1,f}(u,0))_P=(s_{2,f}(u,0))_P=(s_{3,f}(u,0))_P=0
\end{equation}
 has no solution in $(\C^*)^2$. In other words, $f$ satisfies SPND-(i).
    
Note that if $(s_{i,f})_{P_1}(0,v) \not\equiv 0$, for some $i\in \{1,2,3\},$ then we have $(s_{i,f})_{P_1}(0,v)= (s_{i,f}(0,v))_{P_1}$. Moreover, $s_{i,f}$ is $v$-convenient. Thus, we find by Eqs.\eqref{equivtheoremeq2}-\eqref{equivtheoremeq4} that
\begin{equation}\label{equivtheoremeq5}
 (s_{i,f})_{P_1}(0,v)= (s_{i,f}(0,v))_{P_1}=(s_{i,f})_P(u,v), 
 \end{equation}     
 for all $P=(p_1,p_2)$ with $\frac{p_1}{p_2}>k_{1}$.
Again, a lack of solution of \eqref{equivtheoremeq1} implies that for some $i \in \{1,2,3\}$ Eq.\eqref{equivtheoremeq5} holds and thus for any $P$ with $\frac{p_1}{p_2}>k_{1}$
\begin{equation}\label{equivtheoremeq6} 
(s_{1,f})_P(u,v)=(s_{2,f})_P(u,v)=(s_{3,f})_P(u,v)=0
\end{equation}
has no solution in $(\C^{*})^2$. 
Therefore, $f$ satisfies SPND-(ii) for $P$ with $\frac{p_1}{p_2}>k_{1}$. We can prove with appropriate changes that $f$ satisfies SPND-(ii) for $P$ with $\frac{p_1}{p_2}<k_{N}$. Hence, $f$ is strongly partially non-degenerate.
\end{proof}

As a consequence of the proof of Theorem~\ref{prop:strongboundary} we have the following sufficient condition to prove that a mixed polynomial has a non-isolated singularity. 
 \begin{corolario}\label{non-isolated}
Let $f$ be a mixed polynomial and $p=(u_*,v_*)$ a solution of the system $(s_{1,f})_{P}=(s_{2,f})_{P}=(s_{3,f})_{P}=0$, for some weight vector $P$. If there exists a local parametrization of the roots of $(s_{1,f})_{P}$ around $p$, which does not satisfy (M-ii) nor (M-iii), then $f$ has a non-isolated singularity at the origin.   
  \end{corolario}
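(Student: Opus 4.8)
The plan is to reverse the implication established in Case~1 of the proof of Proposition~\ref{prop:strongboundary2}. I argue by contradiction: assume the origin is an isolated singularity of $f$ and show that this is incompatible with the existence of the point $p$ together with the local parametrization.

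First I fix the local parametrization $\bigl(r^{k}u(r,\tau),r\rme^{\rmi t(r,\tau)}\bigr)$ of the roots of $s_{1,f}$ near the $P$-weighted orbit through $p$, in the sense of the decomposition~\eqref{decomp}, normalised so that its limit $\bigl(u(0,\tau_0),\rme^{\rmi t(0,\tau_0)}\bigr)$ corresponds to $p=(u_*,v_*)$. As $r\to 0$ it converges to a parametrization of the roots of $(s_{1,f})_P$, and since $p$ is a common zero of $(s_{1,f})_P,(s_{2,f})_P,(s_{3,f})_P$, the functions $((s_{2,f})_P)(u(0,\tau),t(0,\tau))$ and $((s_{3,f})_P)(u(0,\tau),t(0,\tau))$ vanish at $\tau=\tau_0$. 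Now I run the dichotomy of that proof on a small set $(0,r_0)\times(\tau_0-\epsilon,\tau_0+\epsilon)$: either (a) one of $s_{2,f},-s_{2,f},s_{3,f},-s_{3,f}$ stays strictly positive along the parametrization throughout this set, or (b) it does not.

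In branch~(a), letting $r\to 0$ gives that the relevant face function, say $(s_{2,f})_P$, is $\geq 0$ along the limiting parametrization in a neighbourhood of $\tau_0$ while vanishing at $\tau_0$; hence $\tau_0$ is a local extremum of $((s_{2,f})_P)(u(0,\tau),t(0,\tau))$ with value zero, which the hypothesis on the parametrization (the local form of (M-ii) at $p$) forbids. In branch~(b), the Intermediate Value Theorem produces, arbitrarily close to the origin, points $q$ and $q'$ on the parametrization with $s_{1,f}(q)=s_{2,f}(q)=0$ and $s_{1,f}(q')=s_{3,f}(q')=0$. If $f_u$ or $f_{\bar u}$ is nonzero at $q$, the identities $s_{2,f}(q)=0\Rightarrow|f_u(q)|=|f_{\bar u}(q)|$ and $s_{1,f}(q)=0\Rightarrow f_v(q)=\bigl(f_u(q)/\overline{f_{\bar u}(q)}\bigr)\overline{f_{\bar v}(q)}$ force $s_{3,f}(q)=0$, so $q\in\Sigma_f$, and likewise for $q'$ with $(u,\bar u)$ and $(v,\bar v)$ exchanged; this contradicts isolatedness. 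In the remaining case $f_u(q)=f_{\bar u}(q)=0$ along the whole sequence (and $f_v(q')=f_{\bar v}(q')=0$ along the other), and rescaling by the radial degrees $d(P;f_u),d(P;f_{\bar u}),d(P;f_v),d(P;f_{\bar v})$ and passing to the limit gives $(f_u)_P(p)=(f_{\bar u})_P(p)=(f_v)_P(p)=(f_{\bar v})_P(p)=0$, which the hypothesis on the parametrization (the local form of (M-iii) at $p$) forbids. Every branch thus leads to a contradiction, so the origin cannot be isolated; when $P$ is not associated with a $1$-face of $\Gamma_{s_{1,f}}$ one replaces this by the variant argument of Case~2 in the proof of Proposition~\ref{prop:strongboundary2}.

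The step I expect to be the crux, precisely as in Proposition~\ref{prop:strongboundary2}, is branch~(b): one must ensure that the points produced by the Intermediate Value Theorem are zeros of the full system~\eqref{criticalpoint} and not of a proper subsystem, which is exactly where the two algebraic identities above — together with a careful bookkeeping of the radial degrees of $f_u,f_{\bar u},f_v,f_{\bar v}$ so that the rescaled derivatives have well-defined limits as $r\to 0$ — are essential. A secondary technical check is that every estimate above is genuinely local near $(0,\tau_0)$, so that only a local parametrization around $p$ is needed rather than the global niceness and inner non-degeneracy of $s_{1,f}$ assumed in Proposition~\ref{prop:strongboundary2}.
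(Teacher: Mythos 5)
Your reconstruction follows the right template — the dichotomy from Case~1 of the proof of Proposition~\ref{prop:strongboundary2}, run in reverse under the assumption that the origin is isolated — and the mechanics (the sign dichotomy, the Intermediate Value Theorem step, the algebraic identities forcing $s_{3,f}(q)=0$ once $s_{1,f}(q)=s_{2,f}(q)=0$ and $(f_u,f_{\bar u})(q)\neq(0,0)$, the rescaled limit yielding $(f_u)_P(p)=(f_{\bar u})_P(p)=(f_v)_P(p)=(f_{\bar v})_P(p)=0$) are correctly identified. However, there is a sign reversal in how you invoke the hypothesis, and it is not cosmetic.

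In branch~(a) you conclude that $\tau_0$ is a local extremum of $((s_{2,f})_P)_i(u(0,\tau),t(0,\tau))$ with value zero and say this is ``forbidden by the local form of (M-ii) at $p$.'' But the corollary's hypothesis is that the parametrization \emph{does not} satisfy (M-ii): a local extremum with value zero at $\tau_0$ is precisely what a violation of (M-ii) allows, so nothing is forbidden and no contradiction arises. The same issue recurs in branch~(b): you conclude $(f_u)_P(p)=(f_{\bar u})_P(p)=(f_v)_P(p)=(f_{\bar v})_P(p)=0$ and say the hypothesis (the local form of (M-iii)) forbids this, yet a failure of (M-iii) is exactly the existence of such a common zero. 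In both branches you are using the conditions (M-ii) and (M-iii) \emph{holding} at $p$ — which is the only way the dichotomy yields a contradiction — while the stated hypothesis asserts their failure. Trace the logic of Proposition~\ref{prop:strongboundary2}: branch~(a) with a common zero automatically produces a local extremum of zero value (so (M-ii) automatically fails there), and branch~(b) with isolatedness automatically forces the derivative face functions to vanish (so (M-iii) automatically fails there); consequently one can only reach ``non-isolated'' from a common zero $p$ at which (M-ii) \emph{and} (M-iii) \emph{do} hold, since this rules out both escape routes. Your proof is therefore a proof of the statement with the hypothesis read in the ``satisfies'' sense. You should say this explicitly rather than silently invert the hypothesis; as written, the argument contradicts what you assumed. (A secondary point worth noting: the dichotomy needs a parametrization of the roots of $s_{1,f}$ itself, with $r$ ranging over $(0,r_0)$ in the sense of the decomposition~\eqref{decomp}, not merely of $(s_{1,f})_P$ at $r=0$, which is all the corollary stipulates; in Proposition~\ref{prop:strongboundary2} this extension is supplied by (M-i), which the corollary does not assume, so the deformation of the $r=0$ parametrization to $r>0$ must be taken as part of what ``there exists a local parametrization'' means.)
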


\section{THE STRONG MILNOR CONDITION}\label{section4}
  Recall that $f$ satisfies {\it the strong Milnor condition} if there is a positive number $\rho_0>0$ such that
\begin{equation}\label{arg}
\arg(f):=\frac{f}{|f|} : S^{3}_{\rho} \setminus V_{f}\to S^1
\end{equation}  is a locally trivial fibration for every radius $\rho \leq \rho_0$. The Milnor set $M_{\arg f}$ of the mapping $\arg f:\C^2 \setminus V_f \to S^1$ is defined as 
\begin{equation}\label{milnorsetargf}
M_{\arg f}=\{z \in \C^n \setminus V_f\, |\, \exists \lambda \in \R, \lambda z = \rmi (f \overline{df}- \bar{f} \bar{d}f) \},
\end{equation}
where $\rmd f=\left(\tfrac{\partial f}{\partial u},\tfrac{\partial f}{\partial v}\right)$ and $\bar{d}f=\left(\tfrac{\partial f}{\partial\bar{u}},\tfrac{\partial f}{\partial \bar{v}}\right)$. By \cite[Theorem 2.2]{AraujoTibar} when $f$ has a weakly isolated singularity, the strong Milnor condition is equivalent to $U\cap M_{\arg f}= \emptyset$ for sufficiently small neighbourhoods $U$ of the origin in $0\mathbb{R}^4$.

The fact that strongly Newton non-degenerate mixed polynomials with convenient Newton boundary satisfy the strong Milnor condition was proved in Theorem 33 in \cite{Oka2010}. Now we show that this is in fact true for the larger class of strongly partially non-degenerate mixed polynomials. For this we need several lemmas.
 
 


\begin{lemma}\label{lem:extremeSND}
Let $f : (\C^2,0) \to (\C,0)$ be a strongly inner non-degenerate mixed polynomial. Suppose that $f$ is not $v$-convenient. Then the corresponding extreme vertex $f_{\Delta_1}$ is strongly Newton non-degenerate. Likewise, if $f$ is not $u$-convenient, then $f_{\Delta_{N+1}}$ is strongly Newton non-degenerate. It follows that if $f$ is neither $u$-convenient nor $v$-convenient, then $f$ is strongly Newton non-degenerate.
\end{lemma}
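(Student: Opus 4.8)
The plan is to prove the first assertion — if $f$ is strongly inner non-degenerate and not $v$-convenient, then the extreme vertex face function $f_{\Delta_1}$ is strongly Newton non-degenerate — and then obtain the $u$-convenient statement by the symmetry $u\leftrightarrow v$ (with $P_1$ replaced by $P_N$) and deduce the final sentence by combining with condition SIND-(ii). Throughout, write $\Delta_1=(a,b)\in\mathbb{N}^2$ for the extreme vertex of the compact $1$-face $\Delta(P_1)$ on the $v$-side of $\Gamma_f$, and $\Delta_2$ for its other endpoint. Since $f$ is not $v$-convenient, $\Gamma_f$ does not meet the $v$-axis, so $a\geq 1$; hence every monomial of $f_{\Delta_1}$ has $u$-total-degree exactly $a$, while every monomial of $f_{P_1}$ (all of which lie on the segment $[\Delta_1,\Delta_2]$, so have $u$-total-degree between $a$ and that of $\Delta_2$) has $u$-total-degree at least $a\geq 1$. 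The degenerate case $N=0$, where $f_{\Delta_1}=f_\Gamma$ and SIND means $f_\Gamma$ has no critical point in $\mathbb{C}^2\setminus\{0\}$, is immediate and should be dispatched first.

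The first key step is the identity $\Sigma_{f_{P_1}}\cap\{u=0\}=\Sigma_{f_{\Delta_1}}\cap\{u=0\}$. Both $f_{P_1}$ and $f_{\Delta_1}$ vanish identically on $\{u=0\}$, since every one of their monomials contains a positive power of $u$ or $\bar u$. Restricting to $u=\bar u=0$: the partials $f_v$ and $f_{\bar v}$ of both functions vanish there, because a monomial contributes to $f_v(0,0,v,\bar v)$ only if it has $u$-total-degree $0$, and there are none. The partials $f_u$ and $f_{\bar u}$ evaluated at $u=\bar u=0$ receive contributions only from monomials of $u$-total-degree $1$; if $a\geq 2$ there are none in either function, and if $a=1$ the $u$-total-degree-$1$ monomials of $f_{P_1}$ are exactly those on the vertex $\Delta_1$, i.e. exactly the monomials of $f_{\Delta_1}$, with the same coefficients. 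Hence $(f_{P_1})_u,(f_{P_1})_{\bar u}$ agree with $(f_{\Delta_1})_u,(f_{\Delta_1})_{\bar u}$ on $\{u=0\}$, so the three defining functions $s_{1,g},s_{2,g},s_{3,g}$ of $\Sigma_g$ coincide on $\{u=0\}$ for $g=f_{P_1}$ and $g=f_{\Delta_1}$, which gives the identity.

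The second key step is a scaling argument. The face function $f_{\Delta_1}$ is homogeneous in the modulus of $u$: $f_{\Delta_1}(tu,t\bar u,v,\bar v)=t^{a}f_{\Delta_1}(u,\bar u,v,\bar v)$ for every $t>0$. Differentiating, $s_{1,f_{\Delta_1}}$, $s_{2,f_{\Delta_1}}$, $s_{3,f_{\Delta_1}}$ scale under $(u,v)\mapsto(tu,v)$ by $t^{2a-1}$, $t^{2a-2}$, $t^{2a}$ respectively, all exponents being $\geq 0$ because $a\geq 1$. Suppose, for contradiction, that $(u_0,v_0)\in\Sigma_{f_{\Delta_1}}\cap(\mathbb{C}^*)^2$. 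Then $(tu_0,v_0)\in\Sigma_{f_{\Delta_1}}$ for all $t>0$, and letting $t\to 0^+$ and using continuity of the $s_{j,f_{\Delta_1}}$ yields $(0,v_0)\in\Sigma_{f_{\Delta_1}}$. By the identity above, $(0,v_0)\in\Sigma_{f_{P_1}}$; since $v_0\neq 0$ this contradicts SIND-(i), which asserts $\Sigma_{f_{P_1}}\cap(\mathbb{C}^2\setminus\{v=0\})=\emptyset$. Hence $\Sigma_{f_{\Delta_1}}\cap(\mathbb{C}^*)^2=\emptyset$, i.e. $f_{\Delta_1}$ is strongly Newton non-degenerate, and exchanging $u$ with $v$ handles $f_{\Delta_{N+1}}$ when $f$ is not $u$-convenient.

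For the last sentence, observe that SIND-(ii) says exactly that $f_\Delta$ has no critical point in $(\mathbb{C}^*)^2$ — i.e. is strongly Newton non-degenerate — for every compact $1$-face $\Delta$ and every non-extreme vertex $\Delta$ of $\Gamma_f$. The only compact faces this leaves out are the two extreme vertices $\Delta_1$ and $\Delta_{N+1}$, and these are covered by the two cases just proved once $f$ is assumed neither $u$- nor $v$-convenient. Thus every compact face function of $f$ is strongly Newton non-degenerate, which is the definition of $f$ being strongly Newton non-degenerate. I expect the main obstacle to be the first key step: carefully verifying that the singular-set equations of $f_{P_1}$ and of $f_{\Delta_1}$ genuinely agree on $\{u=0\}$, in particular the bookkeeping of which monomials survive differentiation and evaluation at $u=0$ in the borderline case $a=1$; everything else is elementary homogeneity and a limiting argument.
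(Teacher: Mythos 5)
Your proof is correct, and it takes a genuinely different route from the paper. The paper first observes that SIND forces the extreme vertex $\Delta_1$ to sit at $u$-total-degree $1$ (i.e., some $(1,n)\in\mathrm{supp}(f)$), since otherwise every partial of $f_{P_1}$ vanishes on $\{u=0\}$ already violating SIND-(i); it then exploits the explicit form $f_{\Delta_1}=B(v,\bar v)u+C(v,\bar v)\bar u$ to note that $s_{2,f_{\Delta_1}}$ is independent of $u$, so a critical point $(u_*,v_*)\in(\mathbb{C}^*)^2$ immediately gives $s_{2,f_{\Delta_1}}(0,v_*)=0=s_{2,f_{P_1}}(0,v_*)$, and combines this with $(f_{P_1})_v(0,v)=(f_{P_1})_{\bar v}(0,v)=0$ to produce a critical point of $f_{P_1}$ on $\{u=0\}\setminus\{v=0\}$. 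You instead prove the cleaner and slightly stronger set identity $\Sigma_{f_{P_1}}\cap\{u=0\}=\Sigma_{f_{\Delta_1}}\cap\{u=0\}$ and replace the ``$s_2$ is $u$-independent'' observation with a positive-homogeneity/scaling argument under $(u,v)\mapsto(tu,v)$, $t>0$. Your version has the advantage of not having to argue $a=1$ up front (the case $a\ge 2$ is silently handled because then $\{u=0\}\subset\Sigma_{f_{P_1}}$ and SIND-(i) already fails, but your argument never relies on this), and it treats all three defining equations $s_1,s_2,s_3$ uniformly via scaling rather than singling out $s_2$. The paper's route is a touch more economical, since it only needs $s_{2,f_{\Delta_1}}(0,v_*)=s_{2,f_{P_1}}(0,v_*)$ rather than the full coincidence of $\Sigma$ on $\{u=0\}$, but your approach is easier to state as a reusable lemma. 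Both handle the $u$-convenient case by the evident symmetry, and both deduce the final assertion by combining with SIND-(ii); your bookkeeping of which monomials survive evaluation at $u=\bar u=0$ in the borderline case $a=1$ is accurate.
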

\begin{proof}
Recall that if $f$ is SIND, but not $v$-convenient, there is a $n\in\mathbb{N}$ such that $(1,n)\in supp(f)$. Otherwise, all summands in all of $s_{1,f_{P_1}}$, $s_{2,f_{P_1}}$ and $s_{3,f_{P_1}}$ involve $u$ or $\bar{u}$, so that $s_{1,f_{P_1}}(0,v)=s_{2,f_{P_1}}(0,v)=s_{3,f_{P_1}}(0,v)=0$ for all $v\in\C$, which contradicts SIND-(i).

Since $f$ is not $v$-convenient, we have 
\begin{equation}\label{eq:p10v}
(f_{P_1})_v(0,v)=(f_{P_1})_{\bar{v}}(0,v)=0
\end{equation} 
for all $v\in \C$. Since there is a $n\in\mathbb{N}$ such that $(1,n)\in supp(f)$, the face function $f_{\Delta_1}$ of the extreme vertex $\Delta_1$ is of the form $B(v,\bar{v})u+\bar{u}C(v,\bar{v})$ for some polynomials $B(v,\bar{v})$ and $C(v,\bar{v})$. Thus $(f_{\Delta_1})_u=B(v,\bar{v})$ and $(f_{\Delta_1})_{\bar{u}}=C(v,\bar{v})$. Now suppose that $(u_*,v_*)\in (\C^*)^2$ is a critical point of $f_{\Delta_1}$. Then $s_{2,f_{\Delta_1}}(u_*,v_*)=0$ implies that $|B(v_*,\bar{v_*})|^2-|C(v_*,\bar{v_*})|^2=0$, which does not depend on $u_*$. Therefore, $s_{2,f_{\Delta_1}}(u,v_*)=0$ for all $u\in \C$, in particular for $u=0$. Now note that $s_{2,f_{\Delta_1}}(0,v_*)=s_{2,f_{P_1}}(0,v_*)$ and so $s_{2,f_{P_1}}(0,v_*)=0$. By Eq.~\eqref{eq:p10v} we also have $s_{1,f_{P_1}}(0,v_*)=0$ and $s_{3,f_{P_1}}(0,v_*)=0$. In other words, $(0,v_*)$ is a critical point of $f_{P_1}$, contradicting SIND-(i).

The proof for mixed polynomials that are not $u$-convenient follows the same reasoning. Now recall that SIND-(ii) implies that all face functions of $f$ except possibly $f_{\Delta_1}$ and $f_{\Delta_{N+1}}$ are strongly Newton non-degenerate. But we have shown that both of these are SND as well if $f$ is neither $u$-convenient nor $v$-convenient.
\end{proof}

A mixed polynomial $f:(\C^2,0) \to (\C,0)$ can be written as
\begin{equation}
f(z,\bar{z})=\sum_{\nu+\mu\in supp(f)}c_{\nu,\mu}z^\nu\bar{z}^\mu,
\end{equation}
where $z=(u,v)$, $\bar{z}=(\bar{u},\bar{v})$, $\nu=(\nu_1,\nu_2)$, $\mu=(\mu_1,\mu_2)$, $z^\nu=u^{\nu_1}v^{\nu_2}$ and $\bar{z}^\mu=\bar{u}^{\mu_1}\bar{v}^{\mu_2}$. For any positive weight vector $P=(p_1,p_2)$ we define
\begin{equation}
f_{(d,P)}(u,v):=\underset{p_1(\nu_1+\mu_1)+p_2(\nu_2+\mu_2)=d}{\sum_{\nu+\mu\in supp(f)}}c_{\nu,\mu}z^\nu\bar{z}^\mu,
\end{equation}
that is, $f_{(d,P)}$ is the sum of terms of $f$ whose radial degree with respect to $P$ is equal to $d$. It follows that $f_{(d,P)}=0$ if $d<d(P;f)$ and $f_{(d(P;f),P)}=f_P$ if $d=d(P;f)$.

\begin{lemma}\label{lem:w}
Let $f : (\C^2,0) \to (\C,0)$ be a mixed polynomial, $P$ be a positive weight vector. Let $\alpha$ be some non-zero complex number and $d=d(P;f)$. Define
\begin{align}
w_1(u,v)&=\rmi(\alpha(\overline{f_u})_P-\bar{\alpha}(f_{\bar{u}})_P)_{(d-p_1,P)}(u,v),\nonumber\\
w_2(u,v)&=\rmi(\alpha(\overline{f_v})_P-\bar{\alpha}(f_{\bar{v}})_P)_{(d-p_2,P)}(u,v).
\end{align}
Then the common zeros of $w_1$ and $w_2$ are critical points of $f_P$.
\end{lemma}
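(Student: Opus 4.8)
The plan is to reduce $w_1$ and $w_2$ to the partial derivatives of the face function $f_P$ itself. First I would observe that $(\overline{f_u})_P=\overline{(f_u)_P}$ and $(\overline{f_v})_P=\overline{(f_v)_P}$, since conjugating a monomial does not change its radial degree, and then prove the identities $w_1=\rmi\bigl(\alpha\,\overline{(f_P)_u}-\bar\alpha\,(f_P)_{\bar u}\bigr)$ and $w_2=\rmi\bigl(\alpha\,\overline{(f_P)_v}-\bar\alpha\,(f_P)_{\bar v}\bigr)$. The key point is that $f-f_P$ consists only of monomials of radial degree strictly larger than $d=d(P;f)$, so $\partial_u(f-f_P)$ and $\partial_{\bar u}(f-f_P)$ have radial degree $>d-p_1$; hence, by \cite[Lemma~3.5]{AraujoBodeSanchez}, $(f_u)_P=(f_P)_u$ with $d(P;f_u)=d-p_1$ exactly when $f_P$ involves a positive power of $u$, while $(f_P)_u=0$ and $d(P;f_u)>d-p_1$ otherwise (and symmetrically for $\bar u$). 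Running through the four cases for the pair $(u,\bar u)$ --- both appear in $f_P$, only $u$, only $\bar u$, neither --- and using that $\overline{(f_u)_P}$ is $P$-homogeneous of degree $d(P;f_u)$, one checks that the radial-degree-$(d-p_1)$ truncation of $\alpha\,\overline{(f_u)_P}-\bar\alpha\,(f_{\bar u})_P$ always equals $\alpha\,\overline{(f_P)_u}-\bar\alpha\,(f_P)_{\bar u}$, with one or both summands possibly zero. The same argument with $v,\bar v,p_2$ in place of $u,\bar u,p_1$ settles $w_2$.

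With these identities in hand, I take a common zero $(u_*,v_*)$ of $w_1$ and $w_2$ and put $A=(f_P)_u(u_*,v_*)$, $B=(f_P)_{\bar u}(u_*,v_*)$, $C=(f_P)_v(u_*,v_*)$, $D=(f_P)_{\bar v}(u_*,v_*)$. The hypothesis then reads $\alpha\bar A=\bar\alpha B$ and $\alpha\bar C=\bar\alpha D$. Since $\alpha\neq0$, conjugating gives $\bar B=(\bar\alpha/\alpha)A$ and $\bar D=(\bar\alpha/\alpha)C$, and comparing moduli gives $|A|=|B|$ and $|C|=|D|$. Substituting these relations into the defining equations~\eqref{criticalpoint} of $\Sigma_{f_P}$ yields $s_{1,f_P}(u_*,v_*)=A\bar D-\bar B C=(\bar\alpha/\alpha)AC-(\bar\alpha/\alpha)AC=0$, $s_{2,f_P}(u_*,v_*)=|A|^2-|B|^2=0$, and $s_{3,f_P}(u_*,v_*)=|C|^2-|D|^2=0$. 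Hence $(u_*,v_*)\in\Sigma_{f_P}$, i.e.\ it is a critical point of $f_P$, which is the claim.

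The only real work is in the first step: the bookkeeping of radial degrees, and in particular the degenerate subcases in which $f_P$ does not involve one of the four variables, where the truncated derivative vanishes and must be matched against $(f_P)_u=0$ (resp.\ $(f_P)_{\bar u}=0$, and so on). Once those identities are established, the remainder is a two-line computation using only $\alpha\neq0$ and $|\bar\alpha/\alpha|=1$, so I do not expect a genuine obstacle. Conceptually, the lemma merely records that taking $P$-parts commutes with differentiation precisely on the variables occurring in $f_P$, after which the vanishing of $w_1$ and $w_2$ forces the three singular-set equations of $f_P$ by the short algebraic manipulation above.
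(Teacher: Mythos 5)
Your proof is correct. The first step is essentially the same as the paper's: both reduce $w_1$ and $w_2$ to $\rmi\bigl(\alpha\,\overline{(f_P)_u}-\bar\alpha\,(f_P)_{\bar u}\bigr)$ and $\rmi\bigl(\alpha\,\overline{(f_P)_v}-\bar\alpha\,(f_P)_{\bar v}\bigr)$ using \cite[Lemma~3.5]{AraujoBodeSanchez}, although the paper records the outcome as a $4\times 4$ table of cases rather than stating it as a single identity. Where you genuinely diverge is in the second step. The paper splits into several cases according to how many of $u,\bar u,v,\bar v$ actually appear in $f_P$, with a further subcase in the generic situation distinguishing whether some $(f_P)_x(u_*,v_*)$ vanishes before invoking the ratio identity $\overline{(f_P)_u}/(f_P)_{\bar u}=\bar\alpha/\alpha=\overline{(f_P)_v}/(f_P)_{\bar v}$ to obtain $s_{1,f_P}=0$. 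Your substitution $\bar B=(\bar\alpha/\alpha)A$, $\bar D=(\bar\alpha/\alpha)C$ into $s_{1,f_P}=A\bar D-\bar B C$ sidesteps all of that: it works verbatim whether or not any of $A,B,C,D$ vanish and whether or not they vanish identically, so the entire case analysis collapses to a two-line computation. The only thing you must still be careful about, and you flag it, is verifying the unified identity for $w_1,w_2$ in the degenerate subcases where the $(d-p_i,P)$-truncation kills one or both summands; once that is done your version of the argument is shorter and arguably more transparent than the paper's.
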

\begin{obs}
Observe that by \cite[Lemma 3.5]{AraujoBodeSanchez} we have $d(P;f_u)\geq d-p_1$ with equality if and only if $(f_u)_{(d-p_1,P)}=\left((f_u)_P\right)_{(d-p_1,P)}$ and likewise for derivatives with respect to the other variables $\bar{u}$, $v$ and $\bar{v}$. Therefore, $w_1$ and $w_2$ could equivalently be defined as
\begin{align}
w_1(u,v)&=\rmi(\alpha(\overline{f_u})-\bar{\alpha}(f_{\bar{u}}))_{(d-p_1,P)}(u,v),\nonumber\\
w_2(u,v)&=\rmi(\alpha(\overline{f_v})-\bar{\alpha}(f_{\bar{v}}))_{(d-p_2,P)}(u,v).
\end{align}
\end{obs}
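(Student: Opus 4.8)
The plan is to reduce everything to one bookkeeping identity relating $(f_u)_{(d-p_1,P)}$, $\bigl((f_u)_P\bigr)_{(d-p_1,P)}$ and $(f_P)_u$ (and its three analogues), and then to propagate it through the $\C$-linear combination defining $w_1$ and $w_2$. Throughout I would use two elementary facts about the operator $g\mapsto g_{(m,P)}$ for a fixed weight vector $P=(p_1,p_2)$ and an integer $m$: it is $\C$-linear, and it commutes with complex conjugation in the sense that $\overline{g_{(m,P)}}=(\bar g)_{(m,P)}$. The second holds because conjugating a monomial $c_{\nu,\mu}z^{\nu}\bar z^{\mu}$ produces $\overline{c_{\nu,\mu}}z^{\mu}\bar z^{\nu}$, whose $P$-radial degree $\sum_j p_j(\nu_j+\mu_j)$ is unchanged; in particular $(\overline{f_u})_P=\overline{(f_u)_P}$, and similarly for the other partial derivatives.

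First I would record the degree inequalities. Differentiation with respect to $u$ either annihilates a monomial of $f$ or lowers its $P$-radial degree by exactly $p_1$; since every monomial of $f$ has $P$-radial degree at least $d=d(P;f)$, every monomial of $f_u$ has $P$-radial degree at least $d-p_1$, i.e. $d(P;f_u)\ge d-p_1$, and equality holds precisely when $f_u$ actually carries a term of $P$-radial degree $d-p_1$; this is the content of \cite[Lemma~3.5]{AraujoBodeSanchez} in this situation. The analogues $d(P;f_{\bar u})\ge d-p_1$ and $d(P;f_v),\,d(P;f_{\bar v})\ge d-p_2$ are obtained the same way.

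The key step is to show the identity
\begin{equation*}
(f_u)_{(d-p_1,P)}=\bigl((f_u)_P\bigr)_{(d-p_1,P)}=(f_P)_u,
\end{equation*}
together with its analogues. To see it, I would write $f=f_P+h$ with $d(P;h)>d$; then $f_u=(f_P)_u+h_u$, every monomial of $h_u$ has $P$-radial degree $>d-p_1$, and $(f_P)_u$ is either $0$ or radially weighted homogeneous of $P$-radial degree exactly $d-p_1$, so selecting terms of $P$-radial degree $d-p_1$ yields $(f_u)_{(d-p_1,P)}=(f_P)_u$. On the other hand $(f_u)_P$ is radially weighted homogeneous of $P$-radial degree $d(P;f_u)\ge d-p_1$, so $\bigl((f_u)_P\bigr)_{(d-p_1,P)}$ equals $(f_u)_P$ when $d(P;f_u)=d-p_1$ and is $0$ otherwise; in the first case $(f_u)_P=(f_u)_{(d-p_1,P)}=(f_P)_u$ by definition of the face function, and in the second case $(f_P)_u\equiv 0$, so both sides agree with $(f_P)_u$ in all cases. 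Verbatim the same argument applies to $f_{\bar u}$ at degree $d-p_1$ and to $f_v,f_{\bar v}$ at degree $d-p_2$.

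Finally I would assemble the rewriting. Commuting $(\cdot)_{(d-p_1,P)}$ past the $\C$-linear combination and past the conjugations, and then inserting the key identity,
\begin{align*}
w_1&=\rmi\bigl(\alpha(\overline{f_u})_P-\bar\alpha(f_{\bar u})_P\bigr)_{(d-p_1,P)}=\rmi\alpha\,\overline{\bigl((f_u)_P\bigr)_{(d-p_1,P)}}-\rmi\bar\alpha\,\bigl((f_{\bar u})_P\bigr)_{(d-p_1,P)}\\
&=\rmi\alpha\,\overline{(f_u)_{(d-p_1,P)}}-\rmi\bar\alpha\,(f_{\bar u})_{(d-p_1,P)}=\rmi\bigl(\alpha\overline{f_u}-\bar\alpha f_{\bar u}\bigr)_{(d-p_1,P)},
\end{align*}
and the same computation with $(p_1,u,\bar u)$ replaced by $(p_2,v,\bar v)$ gives $w_2=\rmi(\alpha\overline{f_v}-\bar\alpha f_{\bar v})_{(d-p_2,P)}$, which is the asserted equivalent definition. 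This argument is essentially bookkeeping; I expect the only point requiring a little care to be the edge case $d(P;f_u)>d-p_1$, equivalently $(f_P)_u\equiv 0$, where one must observe that \emph{both} $(f_u)_{(d-p_1,P)}$ and $\bigl((f_u)_P\bigr)_{(d-p_1,P)}$ vanish rather than reasoning with a nonzero face function; once this is handled the remaining manipulations are purely formal.
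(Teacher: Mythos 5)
Your justification is correct and follows exactly the reasoning the paper leaves implicit in the remark: the degree bound from \cite[Lemma 3.5]{AraujoBodeSanchez}, the identity $(f_u)_{(d-p_1,P)}=\bigl((f_u)_P\bigr)_{(d-p_1,P)}=(f_P)_u$ obtained from the decomposition $f=f_P+h$, and the $\C$-linearity and conjugation-compatibility of the operator $g\mapsto g_{(m,P)}$. Your explicit handling of the edge case $d(P;f_u)>d-p_1$ in fact shows that the equality $(f_u)_{(d-p_1,P)}=\bigl((f_u)_P\bigr)_{(d-p_1,P)}$ holds unconditionally (both sides vanish there), which is precisely what the rewriting of $w_1$ and $w_2$ requires, even though the remark phrases the relation as an ``if and only if''.
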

\begin{proof}[Proof of Lemma~\ref{lem:w}]
By \cite[Lemma~3.5]{AraujoBodeSanchez} there are different cases to consider, depending on whether $f_P$ is semiholomorphic with respect to any of the variables $u$, $\bar{u}$, $v$ and $\bar{v}$. We have
\begin{align}
w_1(u,v)&=\begin{cases}
0 &\text{if }(f_P)_u=(f_P)_{\bar{u}}=0,\\
\rmi\alpha(\overline{f_u})_P(u,v) &\text{if }(f_P)_u\neq0,(f_P)_{\bar{u}}=0,\\
-\rmi\bar{\alpha}(f_{\bar{u}})_P(u,v)&\text{if }(f_P)_u=0,(f_P)_{\bar{u}}\neq0,\\
\rmi(\alpha(\overline{f_u})_P-\bar{\alpha}(f_{\bar{u}})_P)(u,v)  &\text{if }(f_P)_u\neq0,(f_P)_{\bar{u}}\neq0,\\
\end{cases}\\
w_2(u,v)&=\begin{cases}
0 &\text{if }(f_P)_v=(f_P)_{\bar{v}}=0,\\
\rmi\alpha(\overline{f_v})_P(u,v) &\text{if }(f_P)_v\neq0,(f_P)_{\bar{v}}=0,\\
-\rmi\bar{\alpha}(f_{\bar{v}})_P(u,v) &\text{if }(f_P)_v=0,(f_P)_{\bar{v}}\neq0,\\
\rmi(\alpha(\overline{f_v})_P-\bar{\alpha}(f_{\bar{v}})_P)(u,v) &\text{if }(f_P)_v\neq0,(f_P)_{\bar{v}}\neq0.\\
\end{cases}
\end{align}
Furthermore, we know from \cite[Lemma 3.5]{AraujoBodeSanchez} that if $f_P$ is not $\bar{x}$-semiholomorphic, i.e., $(f_P)_{x}\neq 0$, for some $x\in\{u,\bar{u},v,\bar{v}\}$, then $(f_P)_x=(f_x)_P$. This leads to
\begin{align}
w_1(u,v)&=\begin{cases}
0 &\text{if }(f_P)_u=(f_P)_{\bar{u}}=0,\\
\rmi\alpha\overline{(f_P)_u}(u,v) &\text{if }(f_P)_u\neq0,(f_P)_{\bar{u}}=0,\\
-\rmi\bar{\alpha}(f_P)_{\bar{u}}(u,v) &\text{if }(f_P)_u=0,(f_P)_{\bar{u}}\neq0,\\
\rmi(\alpha\overline{(f_P)_u}-\bar{\alpha}(f_P)_{\bar{u}})(u,v)  &\text{if }(f_P)_u\neq0,(f_P)_{\bar{u}}\neq0,\\
\end{cases}\\
w_2(u,v)&=\begin{cases}
0 &\text{if }(f_P)_v=(f_P)_{\bar{v}}=0,\\
\rmi\alpha\overline{(f_P)_v}(u,v) &\text{if }(f_P)_v\neq0,(f_P)_{\bar{v}}=0,\\
-\rmi\bar{\alpha}(f_P)_{\bar{v}}(u,v)  &\text{if }(f_P)_v=0,(f_P)_{\bar{v}}\neq0,\\
\rmi(\alpha\overline{(f_P)_v}-\bar{\alpha}(f_P)_{\bar{v}})(u,v) &\text{if }(f_P)_v\neq0,(f_P)_{\bar{v}}\neq0.\\
\end{cases}
\end{align}

Recall that if $f_P$ is $x$-semiholomorphic with $x\in\{u,\bar{u},v,\bar{v}\}$, then the equations that define its critical set simplify to $(f_P)_x=s_{j,f_P}=0$, where $j=2$ if $x\in\{v,\bar{v}\}$ and $j=3$ if $x\in\{u,\bar{u}\}$. 

Now let $(u_*,v_*)\in\C^2$ be such that $w_1(u_*,v_*)=w_2(u_*,v_*)=0$. Then if $f_P$ is $x$-semiholomorphic, the equation $w_i(u_*,v_*)=0$ implies $(f_P)_x(u_*,v_*)=0$, where $i=1$ if $x\in\{u,\bar{u}\}$ and $i=2$ if $x\in\{v,\bar{v}\}$. In particular, if $f_P$ depends exactly on one of $\{u,\bar{u}\}$, say $x$, and exactly on one of $\{v,\bar{v}\}$, say $y$, then $w_1$ and $w_2$ are non-zero multiples of $(f_x)_P=(f_P)_x$ and $(f_y)_P=(f_P)_y$, respectively. Hence common zeros of $w_1$ and $w_2$ are critical points of $f_P$.

If $f_P$ is neither $v$- nor $\bar{v}$-semiholomorphic, then the equation $w_2(u_*,v_*)=0$ implies that $(\alpha(\overline{f_v})_P-\bar{\alpha}(f_{\bar{v}})_P)(u_*,v_*)=0$. Since $\alpha\neq 0$, we obtain $s_{3,f_P}(u_*,v_*)=0$.  The analogous statement holds for $w_1(u_*,v_*)$ and $s_{2,f_P}$ if $f_P$ is not $u$-semiholomorphic and not $\bar{u}$-semiholomorphic. Thus, if $f_P$ depends on exactly three variables out of $\{u,\bar{u},v,\bar{v}\}$, then the common zeros of $w_1$ and $w_2$ are critical points of $f_P$.

Suppose now $f_P$ depends on all of $u,\bar{u},v,\bar{v}$. By the same argument as in the previous case we have $s_{2,f_P}(u_*,v_*)=s_{3,f_P}(u_*,v_*)=0$. Furthermore, $w_1(u_*,v_*)=w_2(u_*,v_*)=0$ implies that if $(f_x)_P(u_*,v_*)=0$ for any $x\in\{u,\bar{u},v,\bar{v}\}$, then $(f_{\bar{x}})_P(u_*,v_*)=0$ as well and so $s_{1,f_P}(u_*,v_*)=0$. We can thus assume that $(f_x)_P(u_*,v_*)\neq0$ for all $x\in \{u,\bar{u},v,\bar{v}\}$. It follows from $w_1(u_*,v_*)=w_2(u_*,v_*)=0$ that
\begin{equation}\label{eq:frac}
\frac{\overline{(f_P)_u}(u_*,v_*)}{(f_P)_{\bar{u}}(u_*,v_*)}=\frac{\bar{\alpha}}{\alpha}=\frac{\overline{(f_P)_v}(u_*,v_*)}{(f_P)_{\bar{v}}(u_*,v_*)}
\end{equation}
and so $s_{1,f_P}(u_*,v_*)=0$.

Lastly, suppose that $f_P$ depends neither on $u$ nor on $\bar{u}$. The argument for $v$ and $\bar{v}$ is analogous. Then $w_1(u,v)=(f_P)_u(u,v)=(f_P)_{\bar{u}}(u,v)=0$ for all $(u,v)\in\C^2$ and therefore both $s_{1,f_P}$ and $s_{2,f_P}$ are constant 0. The same reasoning as in the previous cases shows that a zero $(u_*,v_*)$ of $w_2$ is also a zero of $s_{3,f_P}$ and therefore a critical point of $f_P$. 

Thus in all possible cases $(u_*,v_*)$ is a critical point of $f_P$.
\end{proof}

We write $P_1,P_2,\ldots,P_N$ for the weight vectors associated with the compact 1-faces of the Newton boundary of $f$, ordered in the usual way \cite{Oka2010, AraujoBodeSanchez}. For $P_i=(p_{i,1},p_{i,2})$ we define $k_i=\tfrac{p_{i,1}}{p_{i,2}}$.  Let $\Delta(P_i;f)$, $i\in\{1,2,\ldots,N\}$, denote the set of integer lattice points that lie on the compact 1-face $\Delta(P_i)$ associated to $P_i$.\\

\begin{lemma}\label{lem:conv}
Let $f: (\C^2,0) \to (\C,0)$ be a $v$-convenient mixed polynomial with $(1,n)\notin\Delta(P_1;f)\cap \text{supp}(f)$ for all $n\in\mathbb{N}_0$ and let $P$ be a positive weight vector with $\tfrac{p_1}{p_2}>k_1$. Let $(w_1,w_2)$ be as in Lemma~\ref{lem:w} and $(u_*,v_*)\in \C^2$ be such that $w_1(u_*,v_*)=w_2(u_*,v_*)=0$. Then $(0,v_*)$ is a critical point of $f_{P_1}$. Likewise, $(u_*,0)$ is a critical point of $f_{P_N}$ if $f$ is $u$-convenient with $(n,1)\notin\Delta(P_N;f)\cap \text{supp}(f)$ for all $n\in\mathbb{N}_0$ and $k_N>\tfrac{p_1}{p_2}$.
\end{lemma}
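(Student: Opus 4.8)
The plan is to exploit that a weight vector $P$ with $\tfrac{p_1}{p_2}>k_1$ lies ``past'' the steepest compact $1$-face of $\Gamma_f$: since $f$ is $v$-convenient, $\Delta(P)$ is then the extreme vertex of $\Gamma_f$ on the $v$-axis, so $f_P$ involves only $v$ and $\bar v$. From this one reads off $w_1$ and $w_2$, and then the hypothesis on $\Delta(P_1;f)$ turns the condition ``$w_1=w_2=0$ at $(u_*,v_*)$'' into ``$(0,v_*)$ is a critical point of $f_{P_1}$''.

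First I would identify $f_P$. The face $\Delta(P)$ cannot be a compact $1$-face, since those have slopes $k_i\le k_1<\tfrac{p_1}{p_2}$; so $\Delta(P)$ is a vertex, and because $f$ is $v$-convenient it must be the point $(0,m)\in\text{supp}(f)$ on the $v$-axis, namely the endpoint of $\Delta(P_1)$ lying in $\{u=0\}$. Hence $f_P$ is the sum of the monomials of $f$ whose exponent vector equals $(0,m)$; in particular $(f_P)_u\equiv(f_P)_{\bar u}\equiv0$ and $f_P$ depends only on $v,\bar v$. Using the reformulation of $w_1,w_2$ from the Remark after Lemma~\ref{lem:w}, together with the elementary fact that the part of $f_u$ of $P$-radial degree $d(P;f)-p_1$ equals $(f_P)_u$ and the part of $f_v$ of $P$-radial degree $d(P;f)-p_2$ equals $(f_P)_v$, one gets $w_1\equiv0$ and $w_2=\rmi\bigl(\alpha\,\overline{(f_P)_v}-\bar\alpha\,(f_P)_{\bar v}\bigr)$. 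Thus $w_1(u_*,v_*)=w_2(u_*,v_*)=0$ is equivalent to $\alpha\,\overline{(f_P)_v(v_*,\bar v_*)}=\bar\alpha\,(f_P)_{\bar v}(v_*,\bar v_*)$, and taking moduli (recall $|\alpha|=|\bar\alpha|\neq0$) gives $|(f_P)_v(v_*,\bar v_*)|=|(f_P)_{\bar v}(v_*,\bar v_*)|$.

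Next I would analyse $f_{P_1}$. Writing $f_{P_1}=g+B(v,\bar v)u+C(v,\bar v)\bar u+A(v,\bar v)$ as in the proof of Theorem~\ref{lem:innerpar}, the assumption that no support point of the form $(1,n)$ lies on $\Delta(P_1)$ forces $B\equiv C\equiv0$, so $f_{P_1}=g+A$ with $A=f_P$ (both consist of the monomials attached to the vertex $(0,m)$) and with $g$ consisting only of monomials of $u$-degree at least $2$. Then $(f_{P_1})_u$ and $(f_{P_1})_{\bar u}$ consist of monomials of $u$-degree at least $1$, hence vanish identically on $\{u=0\}$, so $s_{1,f_{P_1}}(0,v_*)=0$ and $s_{2,f_{P_1}}(0,v_*)=0$ automatically. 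Likewise $g_v$ and $g_{\bar v}$ still have $u$-degree at least $2$, so $(f_{P_1})_v(0,v_*)=A_v(v_*,\bar v_*)=(f_P)_v(v_*,\bar v_*)$ and $(f_{P_1})_{\bar v}(0,v_*)=(f_P)_{\bar v}(v_*,\bar v_*)$; hence $s_{3,f_{P_1}}(0,v_*)=|(f_P)_v(v_*,\bar v_*)|^2-|(f_P)_{\bar v}(v_*,\bar v_*)|^2=0$ by the previous paragraph. Therefore $(0,v_*)$ solves all three equations defining $\Sigma_{f_{P_1}}$, i.e.\ it is a critical point of $f_{P_1}$. The claim for $f_{P_N}$ follows by the identical argument with the roles of $u$ and $v$, of $P_1$ and $P_N$, of $k_1$ and $k_N$, and of ``$(1,n)$'' and ``$(n,1)$'' interchanged.

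I expect the only mildly delicate point to be the radial-degree bookkeeping: verifying that $\Delta(P)$ is indeed the $v$-axis vertex when $\tfrac{p_1}{p_2}>k_1$ and $f$ is $v$-convenient, that $w_1\equiv0$ because the part of $f_u$ of $P$-radial degree $d(P;f)-p_1$ is $(f_P)_u=0$, and that $A=f_P$ — i.e.\ making sure no cancellation or degree shift is lost when passing between $f$, $f_P$, $f_{P_1}$ and their partial derivatives. Once these identifications are in place, the conclusion is a direct substitution into the three defining equations of the singular set.
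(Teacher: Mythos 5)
Your proposal is correct and follows essentially the same route as the paper: identify $f_P=f_{\Delta_1}$ as depending only on $v,\bar v$, observe that $w_1\equiv 0$ while $w_2$ reduces to $\rmi(\alpha\,\overline{(f_P)_v}-\bar\alpha\,(f_P)_{\bar v})$, and then use the hypothesis $(1,n)\notin\Delta(P_1;f)\cap\text{supp}(f)$ to pass from $(0,v_*)$ being critical for $f_P$ to being critical for $f_{P_1}$. The paper phrases the first step by invoking Lemma~\ref{lem:w} directly (so that $(u_*,v_*)$, hence $(0,v_*)$, is a critical point of $f_P$), whereas you unfold the computation of $w_1,w_2$ and verify the three equations $s_{j,f_{P_1}}(0,v_*)=0$ explicitly via the $A,B,C,g$ decomposition, but the content is the same.
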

\begin{proof}
We discuss the case of $f$ being $v$-convenient with $(1,n)\notin\Delta(P_1;f)\cap \text{supp}(f)$ for all $n\in\mathbb{N}_0$ and $\tfrac{p_1}{p_2}>k_1$. The case of $f$ being $u$-convenient and $k_N<\tfrac{p_1}{p_2}$ follows the same line of reasoning.

We have seen in Lemma~\ref{lem:w} that $(u_*,v_*)$ is a critical point of $f_P$. Since $f$ is convenient and $\tfrac{p_1}{p_2}>k_1$, the corresponding face $\Delta(P)$ is the extreme vertex $\Delta_1$ and so $f_P$ depends neither on $u$ nor on $\bar{u}$. Thus $(u,v_*)$ is a critical point of $f_P$ for any $u\in\C$, in particular for $u=0$. 

In this case, we have $(f_{P_1})_u(0,v)=(f_{P_1})_{\bar{u}}(0,v)=0$ for all $v\in\C$, which means that $s_{1,f_{P_1}}(0,v)=s_{2,f_{P_1}}(0,v)=0$ for all $v\in\C$. Moreover, $(f_{P_1})_v(0,v)=(f_P)_v(0,v)$ and $(f_{P_1})_{\bar{v}}(0,v)=(f_P)_{\bar{v}}(0,v)$ for all $v\in\C$, which means that $s_{3,f_P}(0,v)=0$ if and only if $s_{3,f_{P_1}}(0,v)=0$. Therefore, since $(0,v_*)$ is a critical point of $f_P$, it is also a critical point of $f_{P_1}$.


\end{proof}



Let now $f: (\C^2,0) \to (\C,0)$ be a mixed polynomial, $P=(p_1,p_2)$ a positive weight vector and $\alpha\in\C^*$. We define $d_1:=\min\{d(P;f_u),d(P;f_{\bar{u}})\}$, $d_2:=\min\{d(P;f_v),d(P;f_{\bar{v}})\}$ and
\begin{align}\label{eq:wtilde}
\widetilde{w_1}(u,v)&:=\rmi(\alpha(\overline{f_u})_P-\bar{\alpha}(f_{\bar{u}})_P)_{(d_1,P)}(u,v),\nonumber\\
\widetilde{w_2}(u,v)&:=\rmi(\alpha(\overline{f_v})_P-\bar{\alpha}(f_{\bar{v}})_P)_{(d_2,P)}(u,v).
\end{align}
As is the case for $(w_1,w_2)$ we could have defined $(\widetilde{w_1},\widetilde{w_2})$ equivalently using $f_x$, $x\in\{u,\bar{u},v,\bar{v}\}$, instead of $(f_x)_P$, since the degrees $d_1$ and $d_2$ can by definition only be obtained by $(f_u)_P$ or $(f_{\bar{u}})_P$ and $(f_v)_P$ or $(f_{\bar{v}})_P$, respectively.

There are many similarities between $(w_1,w_2)$ and $(\widetilde{w_1},\widetilde{w_2})$. In particular, by \cite[Lemma 3.5]{AraujoBodeSanchez} we have $d_1\geq d-p_1$ and $d_2\geq d-p_2$ with equalities if and only if $f_P$ depends on $u$ or $\bar{u}$ and $v$ or $\bar{v}$, respectively. It follows that if $f_P$ depends on $v$ or $\bar{v}$, then $\widetilde{w_2}=w_2$. If $f_P$ depends on $u$ or $\bar{u}$, then $\widetilde{w_1}=w_1$. Furthermore, it follows that if $w_i(u_*,v_*)\neq0$ for some $i\in\{1,2\}$ and some $(u_*,v_*)\in\C^2$, then $d_i=d-p_i$ and therefore $\widetilde{w_i}(u_*,v_*)=w_i(u_*,v_*)\neq 0$. The contrapositive of this statement implies the following lemma.

\begin{lemma}\label{lem:ww}
Let $f : (\C^2,0) \to (\C,0)$ be a mixed polynomial, $P$ be a positive weight vector and $\alpha\in\C^*$. Let $(\widetilde{w_1},\widetilde{w_2})$ be as above. Then common zeros of $\widetilde{w_1}$ and $\widetilde{w_2}$ are critical points of $f_P$.
\end{lemma}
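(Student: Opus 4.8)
The plan is to reduce the statement directly to Lemma~\ref{lem:w} by showing that every common zero of $\widetilde{w_1}$ and $\widetilde{w_2}$ is already a common zero of $w_1$ and $w_2$. The key input is the observation recorded just before the statement, which rests on \cite[Lemma~3.5]{AraujoBodeSanchez}: for $i=1$ one has $d_1=\min\{d(P;f_u),d(P;f_{\bar u})\}\geq d-p_1$, with equality exactly when $f_P$ depends on $u$ or on $\bar u$; and analogously $d_2\geq d-p_2$, with equality exactly when $f_P$ depends on $v$ or on $\bar v$. I would first spell out the two resulting cases for the pair $(w_1,\widetilde{w_1})$. If $f_P$ depends on $u$ or on $\bar u$, then $d_1=d-p_1$, so the radial-degree-$d_1$ part and the radial-degree-$(d-p_1)$ part of $\alpha(\overline{f_u})_P-\bar\alpha(f_{\bar u})_P$ coincide, and hence $\widetilde{w_1}\equiv w_1$ as polynomials. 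If $f_P$ depends on neither $u$ nor $\bar u$, then no monomial of $f_P$ has a positive $u$- or $\bar u$-exponent, so neither $f_u$ nor $f_{\bar u}$ has a term of radial degree $d-p_1$, whence $w_1\equiv 0$. The same dichotomy holds for $(w_2,\widetilde{w_2})$ with $v,\bar v$ replacing $u,\bar u$.

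In either case the implication $\widetilde{w_i}(u_*,v_*)=0 \Rightarrow w_i(u_*,v_*)=0$ holds --- in the first case because the two polynomials are equal, in the second because $w_i$ vanishes identically. Therefore, if $(u_*,v_*)\in\C^2$ satisfies $\widetilde{w_1}(u_*,v_*)=\widetilde{w_2}(u_*,v_*)=0$, then also $w_1(u_*,v_*)=w_2(u_*,v_*)=0$, and Lemma~\ref{lem:w} then yields that $(u_*,v_*)$ is a critical point of $f_P$, which is exactly the claim.

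There is no genuine obstacle here: the statement is essentially the contrapositive of the degree comparison discussed above, combined with Lemma~\ref{lem:w}. The only points requiring care are the routine bookkeeping of the semiholomorphic cases of \cite[Lemma~3.5]{AraujoBodeSanchez} and the verification that, when $d_i=d-p_i$, the identity $\widetilde{w_i}\equiv w_i$ holds at the level of polynomials rather than merely at the point in question --- which it does, since both sides are by definition the radial-degree-$(d-p_i)$ part of the same polynomial.
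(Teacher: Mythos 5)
Your proposal is correct and follows essentially the same route as the paper: you reduce to Lemma~\ref{lem:w} by showing that a common zero of $\widetilde{w_1},\widetilde{w_2}$ is a common zero of $w_1,w_2$, via the dichotomy (for each index $i$) that either $d_i=d-p_i$ and $\widetilde{w_i}\equiv w_i$, or $f_P$ is independent of the relevant variables and $w_i\equiv 0$. The paper states this as the contrapositive of the observation in the preceding paragraph; your case split is merely a more explicit unpacking of the same argument.
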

\begin{proof}
We know from the above that a common zero $(u_*,v_*)$ of $\widetilde{w_1}$ and $\widetilde{w_2}$ is also a common zero of $w_1$ and $w_2$. But then Lemma~\ref{lem:w} implies that $(u_*,v_*)$ is a critical point of $f_P$.
\end{proof}

The same arguments imply the analogue of Lemma~\ref{lem:conv}. In general, $w_1$ and $w_2$ are easier to handle than $\widetilde{w_1}$ and $\widetilde{w_2}$, because the case when $f_P$ depends neither on $x$ nor on $\bar{x}$ for $x\in\{u,\bar{u},v,\bar{v}\}$ is simpler. However, an advantage of working with $(\widetilde{w_1},\widetilde{w_2})$ is that the assumption on $(1,n)$ in Lemma~\ref{lem:conv} is no longer necessary for the corresponding result on $\widetilde{w_1}$ and $\widetilde{w_2}$.

\begin{lemma}\label{lem:conv2}
Let $f: (\C^2,0) \to (\C,0)$ be a $v$-convenient mixed polynomial, let $P$ be a positive weight vector with $\tfrac{p_1}{p_2}>k_1$ and $\alpha\in\C^*$. Let $(\widetilde{w_1},\widetilde{w_2})$ be as in Eq.~\eqref{eq:wtilde} and $(u_*,v_*)\in \C^2$ be such that $\widetilde{w_1}(u_*,v_*)=\widetilde{w_2}(u_*,v_*)=0$. Then $(0,v_*)$ is a critical point of $f_{P_1}$. Likewise, $(u_*,0)$ is a critical point of $f_{P_N}$ if $f$ is $u$-convenient and $k_N>\tfrac{p_1}{p_2}$.
\end{lemma}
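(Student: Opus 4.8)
\emph{Proof proposal.} The plan is to follow the proof of Lemma~\ref{lem:conv} almost verbatim, replacing Lemma~\ref{lem:w} by Lemma~\ref{lem:ww} and exploiting the one place where $(\widetilde{w_1},\widetilde{w_2})$ carries more information than $(w_1,w_2)$: since $\widetilde{w_1}$ is taken at the \emph{true} minimal $P$-degree $d_1$ of $\alpha\overline{f_u}-\bar\alpha f_{\bar u}$ rather than at $d-p_1$, it still detects the $u$- and $\bar u$-linear part of $f_{P_1}$ even when $f_P$ has no dependence on $u$ or $\bar u$. I treat the $v$-convenient case with $\tfrac{p_1}{p_2}>k_1$; the $u$-convenient statement is symmetric.

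First, Lemma~\ref{lem:ww} shows $(u_*,v_*)$ is a critical point of $f_P$. Since $f$ is $v$-convenient and $\tfrac{p_1}{p_2}>k_1$, the weight vector $P$ selects the extreme vertex $\Delta_1$, so $f_P=f_{\Delta_1}$ is a nonconstant polynomial in $v,\bar v$ only; hence $s_{1,f_P}\equiv s_{2,f_P}\equiv 0$ and the critical set of $f_P$ is the $u$-independent set $\{s_{3,f_P}=0\}$, so $(0,v_*)$ is also critical for $f_P$, i.e.\ $s_{3,f_P}(0,v_*)=0$. As $\Delta_1$ is the unique lattice point of $\Delta(P_1)$ with vanishing $u$-coordinate, setting $u=\bar u=0$ in $f_{P_1}$ leaves exactly $f_{\Delta_1}$, so $f_{P_1}(0,v)=f_{\Delta_1}(v,\bar v)=f_P$, and differentiating (which commutes with $u=\bar u=0$) gives $(f_{P_1})_v(0,v)=(f_P)_v$ and $(f_{P_1})_{\bar v}(0,v)=(f_P)_{\bar v}$; therefore $s_{3,f_{P_1}}(0,v_*)=s_{3,f_P}(0,v_*)=0$.

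It remains to prove $s_{1,f_{P_1}}(0,v_*)=s_{2,f_{P_1}}(0,v_*)=0$. Write $f_{P_1}=f_{\Delta_1}(v,\bar v)+B(v,\bar v)u+C(v,\bar v)\bar u+(\text{terms of }u\text{- or }\bar u\text{-degree}\ge 2)$, so $(f_{P_1})_u(0,v)=B$ and $(f_{P_1})_{\bar u}(0,v)=C$. If $B\equiv C\equiv 0$ then $s_{1,f_{P_1}}(0,v_*)=s_{2,f_{P_1}}(0,v_*)=0$ trivially (this is exactly the situation of Lemma~\ref{lem:conv}). Otherwise, the key computation — a short estimate using only the slope inequality $p_1>p_2k_1$ — shows that for every monomial of $f$ containing $u$ (resp.\ $\bar u$), the $P$-degree of its $\partial_u$- (resp.\ $\partial_{\bar u}$-) derivative is at least $p_2\ell$, where $\ell=(d(P_1;f)-p_{1,1})/p_{1,2}$, with equality exactly for the $u$-linear (resp.\ $\bar u$-linear) terms lying on $\Delta(P_1)$; contributions from $u^2$- or $u\bar u$-type monomials and from monomials strictly above $\Delta(P_1)$ are strictly larger. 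Consequently $d_1=p_2\ell$ and $\widetilde{w_1}(u,v)=\rmi(\alpha\overline{B(v,\bar v)}-\bar\alpha C(v,\bar v))$, independent of $u$ (one summand dropping out if $B\equiv0$ or $C\equiv0$). Then $\widetilde{w_1}(u_*,v_*)=0$ gives $\alpha\overline{B(v_*,\bar v_*)}=\bar\alpha C(v_*,\bar v_*)$; taking absolute values yields $|B(v_*,\bar v_*)|=|C(v_*,\bar v_*)|$, i.e.\ $s_{2,f_{P_1}}(0,v_*)=0$. For $s_1$ I use $\widetilde{w_2}$: since $f_P$ involves $v$ or $\bar v$, the remark preceding Lemma~\ref{lem:ww} gives $\widetilde{w_2}=w_2$, and the expression for $w_2$ from Lemma~\ref{lem:w} shows $\widetilde{w_2}(u_*,v_*)=0$ yields $\alpha\overline{(f_P)_v(v_*,\bar v_*)}=\bar\alpha(f_P)_{\bar v}(v_*,\bar v_*)$ when $f_P$ depends on both $v$ and $\bar v$, and the vanishing of $(f_P)_v$ or of $(f_P)_{\bar v}$ at $(v_*,\bar v_*)$ in the two semiholomorphic sub-cases. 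In the first case, substituting this proportionality and $\alpha\overline{B}=\bar\alpha C$ into $s_{1,f_{P_1}}(0,v_*)=B\,\overline{(f_P)_{\bar v}}-\overline{C}\,(f_P)_v$ (all evaluated at $v_*$) makes it vanish, exactly as in the four-variable case of the proof of Lemma~\ref{lem:w} (cf.\ Eq.~\eqref{eq:frac}); in the degenerate sub-cases ($B\equiv0$ or $C\equiv0$, or $f_P$ semiholomorphic in $v$) one of $(f_{P_1})_u(0,v_*),(f_{P_1})_{\bar u}(0,v_*)$ or one of $(f_{P_1})_v(0,v_*),(f_{P_1})_{\bar v}(0,v_*)$ vanishes, so $s_{1,f_{P_1}}(0,v_*)=0$ and, where needed, $s_{2,f_{P_1}}(0,v_*)=0$ as well. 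Hence $(0,v_*)$ is a critical point of $f_{P_1}$, and the $u$-convenient claim follows symmetrically.

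The main obstacle is the radial-degree bookkeeping behind the identity $\widetilde{w_1}=\rmi(\alpha\overline{B}-\bar\alpha C)$: one must check carefully that under $p_1>p_2k_1$ the minimal $P$-degree $d_1$ of $\alpha\overline{f_u}-\bar\alpha f_{\bar u}$ is realised \emph{only} by the $u$-linear and $\bar u$-linear terms of $f$ on the face $\Delta(P_1)$ — this is precisely what makes the $(1,n)$-hypothesis of Lemma~\ref{lem:conv} unnecessary here — together with organising the ensuing case analysis (which of $B,C$ vanish, and the semiholomorphy type of $f_P$) so that it reuses, rather than redoes, the case distinctions already established in Lemma~\ref{lem:w}.
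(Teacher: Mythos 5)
Your proof is correct and follows essentially the same route as the paper's: both split off the first case (no $(1,n)$ on $\Delta(P_1)\cap\mathrm{supp}(f)$) and reduce it to Lemma~\ref{lem:conv}, and both handle the second case via the decomposition $f_{P_1}=A+Bu+C\bar{u}+D$ and the identity $\widetilde{w_1}=\rmi(\alpha\overline{B}-\bar{\alpha}C)$, reading off $s_{2,f_{P_1}}(0,v_*)=0$ from $|B(v_*)|=|C(v_*)|$ and $s_{1,f_{P_1}}(0,v_*)=0$ from the same ratio argument as Eq.~\eqref{eq:frac}. The only minor stylistic difference is that you obtain $s_{3,f_{P_1}}(0,v_*)=0$ by first invoking Lemma~\ref{lem:ww} to locate $(u_*,v_*)$ in the critical set of $f_P$ and then using that this set is $u$-independent, whereas the paper simply computes $\widetilde{w_2}=\rmi(\alpha\overline{A_v}-\bar{\alpha}A_{\bar{v}})$ directly and reads off the same conclusion; both are valid and equivalent in content.
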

\begin{proof}
The case where $(1,n)\notin\Delta(P_1;f)\cap \text{supp}(f)$ for all $n\in\mathbb{N}_0$ follows directly from the fact that zeros of $(\widetilde{w_1},\widetilde{w_2})$ are also zeros of $(w_1,w_2)$ and Lemma~\ref{lem:conv}.

Now suppose that there is a $n\in\mathbb{N}_0$ with $(1,n)\in\Delta(P_1;f)\cap \text{supp}(f)$. Then $f_{P_1}$ can be written as $f_{P_1}(u,\bar{u},v,\bar{v})=A(v,\bar{v})+uB(v,\bar{v})+\bar{u}C(v,\bar{v})+D(u,\bar{u},v,\bar{v})$ for polynomials $A$, $B$, $C$ and $D$, where each monomial in $D$ is divisible by $u^2$, $u\bar{u}$ or $\bar{u}^2$ and $B$ and $C$ are not both 0. Furthermore, $A$ is not constant 0, because by assumption $f$ is $v$-convenient.

A direct calculation gives $\widetilde{w_2}(u,v)=\rmi(\alpha \overline{A_v}(v,\bar{v})-\bar{\alpha}A_{\bar{v}}(v,\bar{v}))$ and $\widetilde{w_1}(u,v)=\rmi(\alpha\overline{B}(v,\bar{v})-\bar{\alpha}C(v,\bar{v}))$. Note that these expressions do not depend on $u$, so that if $(u_*,v_*)\in\C^2$ is a common zero of $\widetilde{w_1}$ and $\widetilde{w_2}$, so is $(u,v_*)$ for any $u\in \C$. Since $(f_{P_1})_v(0,v)=A_v(v,\bar{v})$ and $(f_{P_1})_{\bar{v}}(0,v)=A_{\bar{v}}(0,v)$ for all $v\in \C$, the equation $\widetilde{w_2}(0,v_*)=0$ implies that $s_{3,f_{P_1}}(0,v_*)=0$. Similarly, we have $(f_{P_1})_u(0,v)=B(v,\bar{v})$ and $(f_{P_1})_{\bar{u}}=C(v,\bar{v})$, so that $\widetilde{w_1}(0,v_*)=0$ implies $s_{2,f_{P_1}}(0,v_*)=0$. It follows from an equation analogous to Eq.~\eqref{eq:frac} that $s_{1,f_{P_1}}(0,v_*)=0$ as well. Therefore, $(0,v_*)$ is a critical point of $f_{P_1}$.
\end{proof}

Let $z(\tau)=(u(\tau),v(\tau))$, $0\leq\tau\leq 1$, be a real-analytic curve in $\C^2$ with $z(0)=(0,0)$ and let $f:(\C^2,0)\to(\C,0)$ be a mixed polynomial. We denote the lowest order of $\tau$ in $f(z(\tau))$ by $d_z(f)$. We write $a_z(f)$ for the coefficient of $\tau^{d_z(f)}$ in $f(z(\tau))$.

The following lemmas are fairly elementary facts on power series, but will be used frequently.

\begin{lemma}\label{lem:lowestorder2}
Let $z(\tau)=(u(\tau),v(\tau))$, $0\leq\tau\leq 1$, be a real-analytic curve in $\C^2$ with $z(0)=(0,0)$ and let $f,g:(\C^2,0)\to(\C,0)$ be mixed polynomials.
\begin{itemize}
\item $d_z(fg)=d_z(f)+d_z(g)$ and $a_z(fg)=a_z(f)a_z(g)$.
\item $d_z(f+g)\geq \min\{d_z(f),d_z(g)\}$ with strict inequality if and only if $d_z(f)=d_z(g)$ and $a_z(f)=-a_z(g)$. If $d_z(f)<d_z(g)$, then $a_z(f+g)=a_z(f)$, and if $d_z(f)=d_z(g)$ with $a_z(f)\neq -a_z(g)$, then $a_z(f+g)=a_z(f)+a_z(g)$. 
\end{itemize}
\end{lemma}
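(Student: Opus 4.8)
The statement to prove is Lemma~\ref{lem:lowestorder2}, two elementary facts on leading terms of mixed polynomials restricted to a real-analytic curve. Here is how I would organize the proof.

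\medskip

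\textbf{Setup.} Write $z(\tau)=(u(\tau),v(\tau))$ with $u(\tau)=\sum_{k\geq a}u_k\tau^k$ and $v(\tau)=\sum_{k\geq b}v_k\tau^k$ as convergent real power series (each entry of $z$ is a real-analytic germ with $z(0)=0$, so the series start at positive order). For a mixed polynomial $h$, the composition $h(z(\tau),\overline{z(\tau)})$ is again a convergent power series in $\tau$ with real... well, complex coefficients; by definition $d_z(h)$ is the smallest $k$ with nonzero coefficient and $a_z(h)$ is that coefficient, with the convention $d_z(h)=+\infty$, $a_z(h)=0$ when $h(z(\tau))\equiv 0$. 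The plan is then to reduce everything to the standard valuation/order calculus for formal power series in one variable $\tau$ over $\mathbb{C}$, since $\tau\mapsto h(z(\tau),\overline{z(\tau)})$ is just such a series.

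\medskip

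\textbf{First bullet (multiplicativity).} Since $(fg)(z(\tau))=f(z(\tau))\,g(z(\tau))$ as an identity of power series in $\tau$, this is exactly the statement that the order of a product of two power series in one variable is the sum of the orders, and the leading coefficient of the product is the product of the leading coefficients. I would note that $\mathbb{C}[[\tau]]$ is an integral domain (equivalently: $\mathbb{C}$ has no zero divisors), so if $f(z(\tau))=a_z(f)\tau^{d_z(f)}+\text{h.o.t.}$ and $g(z(\tau))=a_z(g)\tau^{d_z(g)}+\text{h.o.t.}$ with $a_z(f),a_z(g)\neq 0$, the coefficient of $\tau^{d_z(f)+d_z(g)}$ in the product is $a_z(f)a_z(g)\neq 0$ and all lower-order coefficients vanish. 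The degenerate cases where $f(z(\tau))\equiv 0$ or $g(z(\tau))\equiv 0$ are handled by the convention $+\infty+d=+\infty$ and $0\cdot a=0$.

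\medskip

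\textbf{Second bullet (additivity).} Again using $(f+g)(z(\tau))=f(z(\tau))+g(z(\tau))$, this is the elementary fact about orders of sums. Without loss of generality $d_z(f)\leq d_z(g)$. If $d_z(f)<d_z(g)$, the coefficient of $\tau^{d_z(f)}$ in the sum is $a_z(f)\neq 0$, giving $d_z(f+g)=d_z(f)=\min\{d_z(f),d_z(g)\}$ and $a_z(f+g)=a_z(f)$. If $d_z(f)=d_z(g)=:d$, the coefficient of $\tau^{d}$ in the sum is $a_z(f)+a_z(g)$; this is nonzero precisely when $a_z(f)\neq -a_z(g)$, in which case $d_z(f+g)=d$ and $a_z(f+g)=a_z(f)+a_z(g)$, whereas if $a_z(f)=-a_z(g)$ these cancel and $d_z(f+g)>d=\min\{d_z(f),d_z(g)\}$. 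This accounts for the "if and only if" in the strict-inequality clause. Symmetry in $f$ and $g$ covers the remaining sub-case.

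\medskip

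\textbf{Main obstacle.} Honestly there is no real obstacle here; the only things to be careful about are bookkeeping with the $\equiv 0$ conventions (so that the statements remain literally true when one or both of $f(z(\tau))$, $g(z(\tau))$ vanishes identically) and being explicit that composition with $z(\tau)$ turns a mixed polynomial into an honest one-variable power series, so that the classical order function applies verbatim. I would keep the write-up to a few sentences invoking the valuation on $\mathbb{C}[[\tau]]$ rather than expanding coefficients by hand.
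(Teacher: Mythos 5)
Your proof is correct. The paper states this lemma without proof, remarking only that it is a "fairly elementary fact on power series," and your argument—reducing to the standard valuation on $\mathbb{C}[[\tau]]$ after composing with $z(\tau)$, with the usual conventions for identically vanishing compositions—is exactly the intended elementary argument.
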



\begin{lemma}\label{lem:lowestorder}
Let $z(\tau)=(u(\tau),v(\tau))$, $0\leq\tau\leq 1$, be a real-analytic curve in $\C^2$ with $z(0)=(0,0)$ and 
\begin{align}
u(\tau)&=a\tau^{p_1}+h.o.t.,\\
v(\tau)&=b\tau^{p_2}+h.o.t.,
\end{align}
for some $a,b\in\C^*$ and $p_1,p_2\in\mathbb{N}$. The expression h.o.t. refers to higher order terms in $\tau$. Let $f: (\C^2,0) \to (\C,0)$ be a mixed polynomial and $P=(p_1,p_2)$. If $f(z(\tau))\not\equiv0$, then the lowest order of $\tau$ in $f(z(\tau))$ is at least $d(P;f)$.
\end{lemma}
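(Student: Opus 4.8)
The plan is to substitute the curve $z(\tau)$ into $f$ one monomial at a time and keep track of the lowest power of $\tau$ using Lemma~\ref{lem:lowestorder2}. Write $f(z,\bar z)=\sum_{\nu,\mu}c_{\nu,\mu}z^\nu\bar z^\mu$ with all $c_{\nu,\mu}\neq 0$, and put $M_{\nu,\mu}=c_{\nu,\mu}u^{\nu_1}v^{\nu_2}\bar u^{\mu_1}\bar v^{\mu_2}$. Since $\tau$ is a \emph{real} parameter, $\overline{u(\tau)}=\bar a\tau^{p_1}+h.o.t.$ and $\overline{v(\tau)}=\bar b\tau^{p_2}+h.o.t.$, so in the notation introduced before Lemma~\ref{lem:lowestorder2} we have $d_z(u)=d_z(\bar u)=p_1$, $d_z(v)=d_z(\bar v)=p_2$, $a_z(u)=a$, $a_z(\bar u)=\bar a$, $a_z(v)=b$ and $a_z(\bar v)=\bar b$.

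First I would apply the product formula of Lemma~\ref{lem:lowestorder2} repeatedly to each monomial, obtaining $d_z(M_{\nu,\mu})=p_1(\nu_1+\mu_1)+p_2(\nu_2+\mu_2)=rdeg_P(M_{\nu,\mu})$ and $a_z(M_{\nu,\mu})=c_{\nu,\mu}a^{\nu_1}b^{\nu_2}\bar a^{\mu_1}\bar b^{\mu_2}$. Because $a,b\in\C^*$ and $c_{\nu,\mu}\neq 0$, this leading coefficient is nonzero, so $M_{\nu,\mu}(z(\tau))\not\equiv 0$ and its order in $\tau$ equals $rdeg_P(M_{\nu,\mu})$ exactly. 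Since $d(P;f)$ is by definition the minimum of $rdeg_P$ over the monomials of $f$, this gives $d_z(M_{\nu,\mu})\geq d(P;f)$ for every $(\nu,\mu)\in supp(f)$.

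Finally I would sum over the support: applying the second part of Lemma~\ref{lem:lowestorder2} inductively to $f(z(\tau))=\sum_{\nu,\mu}M_{\nu,\mu}(z(\tau))$ yields $d_z(f)\geq\min_{\nu,\mu}d_z(M_{\nu,\mu})\geq d(P;f)$ whenever $f(z(\tau))\not\equiv 0$, which is the hypothesis. There is no genuine obstacle here; the only point requiring a little care is that complex conjugation commutes with substituting a real-parametrized curve (conjugating $z(\tau)$ only conjugates its coefficients, since $\overline{\tau^{k}}=\tau^{k}$), and one should observe that the coefficient of $\tau^{d(P;f)}$ in $f(z(\tau))$ is precisely $f_P(a,\bar a,b,\bar b)$. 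This may cancel to zero, which is exactly why the conclusion is an inequality rather than an equality, and it is the computation referred to in the proof of Theorem~\ref{strong-isolated3}.
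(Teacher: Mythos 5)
Your proof is correct and follows essentially the same route as the paper's: both substitute the curve into $f$ term by term, observe that the order in $\tau$ of each piece equals its radial degree with respect to $P$ (hence is at least $d(P;f)$), and note that the coefficient of $\tau^{d(P;f)}$ is exactly $f_P$ evaluated at $(a,b)$, which may vanish by cancellation — the paper groups terms into the slices $f_{(d,P)}$ while you work monomial-by-monomial via Lemma~\ref{lem:lowestorder2}, but that is only a cosmetic difference.
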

\begin{proof}
A direct calculation gives
\begin{equation}
f(z(\tau))=\sum_{d\in \mathbb{N}}f_{(d,P)}(z(\tau))=\sum_{d\in\mathbb{N}}(f_{(d,P)}(a,b)\tau^d+h.o.t),
\end{equation}
where $h.o.t.$~in each summand represents terms whose degree in $\tau$ is greater than $d$.
Thus the lowest order term possible is obtained from the lowest order term of the summand where $d=d(P;f)$, i.e., $f_P(a,b)\tau^{d(P;f)}$. However, since $f_P(a,b)$ could be zero, the actual lowest order $d_z(f)$ could also be greater.
\end{proof}

\begin{proof}[Proof of Theorem~\ref{SMCteo}]
To prove that $U\cap M_{\arg f}=\emptyset $,  for sufficiently small neighbourhoods $U$ of the origin $0\in\mathbb{R}^4$, we proceed as in the proof of \cite[Lemma 11]{Oka2010}. Assume that $f$ does not satisfy the strong Milnor condition. By the curve selection lemma we can find a real analytic curve  $z(\tau)=(u(\tau),v(\tau)), 0\leq \tau \leq 1$, satisfying
\begin{itemize}
    \item[(i)] $z(0)=0$ and $z(\tau)\in \C^2\setminus V_f$ for $\tau>0$. 
    \item[(ii)]$\rmi(f \overline{df} - \bar{f}\bar{d}f)(z(\tau)) = \lambda(\tau)z(\tau)$ for some real number $\lambda(\tau)$. 
\end{itemize}
Since the zeros of $\rmi(f \overline{df} - \bar{f}\bar{d}f)$ are exactly $\Sigma_f\cup V_f$, it does not vanish outside of $V_f$ and close to the origin by Proposition~\ref{strong-isolated3}, which implies that $\lambda(\tau)\not \equiv 0$.\\

\noindent
\underline{Case 1: $ u(\tau)\not \equiv 0$ and $ v(\tau)\not \equiv 0$}\\
Since $z(\tau)$ is real analytic, it is given by a Taylor Series:
$$u(\tau)=a\tau^{p_1}+h.o.t.,\qquad a\neq 0,\, p_1>0,$$
$$v(\tau)=b\tau^{p_2}+h.o.t.,\qquad b\neq 0,\, p_2>0.$$
Define $P =(p_1,p_2),\ z_0 = (a,b) \in (\C^*)^{2}$ and $d = d(P; f)$. We consider the expansions: 
\begin{align}
f(z(\tau))&=\alpha \tau^q+h.o.t.; &q\geq d=d(P;f), \alpha \neq 0
\\
f_x(z(\tau))&=\beta_x \tau^{q_x}+h.o.t.; &x\in\{u,\bar{u}\},\,  q_x\geq d(P;f_x)\geq d-p_1, \beta_x \neq 0  \\
f_y(z(\tau))&=\beta_y\tau^{q_y}+h.o.t.; &y\in\{v,\bar{v}\},\,  q_y\geq d(P;f_y)\geq d-p_2, \beta_y \neq 0  \\
\lambda(\tau)&=\lambda_0\tau^s+h.o.t., &\lambda_0 \in \R, \,s\in\mathbb{Z}_{\geq 0}.
\end{align}
Note that $q_z=d(P;f_z)$ if and only if $(f_z)_P(a,b)\neq 0$ and $\beta_z=(f_z)_P(a,b)$, $z\in \{u,v,\bar{u},\bar{v}\}$.

If Assumption (ii) is satisfied, then that means in particular that the lowest order terms of both functions, $\rmi(f \overline{df} - \bar{f}\bar{d}f)(z(\tau))$ and $\lambda(\tau)z(\tau)$, must agree.

We know from Lemma~\ref{lem:lowestorder2} that the lowest order of the left hand is at least $\min\{d_z(f\overline{f_u}),d_z(\bar{f}f_{\bar{u}})\}$ for the first coordinate and at least $\min\{d_z(f\overline{f_v}),d_z(\bar{f}f_{\bar{v}})\}$ for the second coordinate. Using again Lemma~\ref{lem:lowestorder2} and the fact that $d_z(f)=d_z(\bar{f})=q$, we have at least $q+\min\{d_z(\overline{f_u}),d_z(f_{\bar{u}})\}$ and $q+\min\{d_z(\overline{f_v}),d_z(f_{\bar{v}})\}$, respectively. By Lemma~\ref{lem:lowestorder} these are bounded from below by $q+d_1$ and $q+d_2$, respectively, which by \cite[Lemma 3.5]{AraujoBodeSanchez} are at least $q+d-p_1$ and $q+d-p_2$, respectively.

Therefore, the coefficient of $\tau^{q+d-p_1}$ in $\rmi(f \overline{f_u} - \bar{f}f_{\bar{u}})(z(\tau))$ is either 0 or the lowest order coefficient and thus by Assumption (ii) equal to $\lambda_0a$. Likewise, the coefficient of $\tau^{q+d-p_2}$ in $\rmi(f \overline{f_v} - \bar{f}f_{\bar{v}})(z(\tau))$ is either zero or equal to $\lambda_0b$.



Using again Lemma~\ref{lem:lowestorder2}, Lemma~\ref{lem:lowestorder} and the fact that we already know that $a_z(f)=\alpha$, $d_z(f)=q$, we can calculate these coefficients as $\rmi(\alpha (\overline{f_u})_P - \bar{\alpha}(f_{\bar{u}})_P)_{(d-p_1,P)}(a,b)$ and $\rmi(\alpha (\overline{f_v})_P - \bar{\alpha}(f_{\bar{v}})_P)_{(d-p_2,P)}(a,b)$.


Note that these are exactly the definitions of $w_1(a,b)$ and $w_2(a,b)$ in Lemma~\ref{lem:w}. We claim that $w_1(a,b)$ and $w_2(a,b)$ are both zero.

(Alternatively, we may consider the coefficients of $\tau^{q+d_1}$ and $\tau^{q+d_2}$, which are also either zero or the lowest order coefficients of the left hand side in Assumption (ii). These coefficients are equal to $\widetilde{w_1}(a,b)$ and $\widetilde{w_2}(a,b)$.)

We find that
\begin{align}\label{hermieq1}
&\left \langle \frac{d z(\tau)}{d\tau},\overline{df}(z(\tau))\right \rangle+ \left \langle \frac{d \bar{z}(\tau)}{d\tau},\overline{\bar{d}f}(z(\tau))\right \rangle\nonumber\\
=&\left(f_u(z(\tau))\frac{du(\tau)}{d\tau}+f_{\bar{u}}(z(\tau))\frac{d\bar{u}(\tau)}{d\tau}\right)+\left( f_v(z(\tau))\frac{dv(\tau)}{d\tau}+ f_{\bar{v}}(z(\tau))\frac{d\bar{v}(\tau)}{d\tau}\right)\nonumber\\
=&\frac{d}{d\tau}f(z(\tau))=q\alpha \tau^{q-1}+h.o.t.
\end{align}
where $\langle\, \cdot\,, \,\cdot \,\rangle$ denotes the Hermitian inner product in $\C^2$. By Lemma~\ref{lem:lowestorder} and \cite[Lemma 3.5]{AraujoBodeSanchez} the lowest orders of the left hand side are at least $d-1$ with
\begin{align}
    \left \langle \frac{d z(\tau)}{d\tau},\overline{df}(z(\tau))\right \rangle=&\langle (p_1a,p_2b), ((\overline{f_u})_{(d-p_1,P)}(a,b),(\overline{f_v})_{(d-p_2,P)}(a,b)) \rangle  \tau^{d-1}+h.o.t.\label{hermieq2}\\
 \left \langle \frac{d \bar{z}(\tau)}{d\tau},\overline{\bar{d}f}(z(\tau))\right \rangle=&\langle (p_1\bar{a},p_2\bar{b}), ((\overline{f_{\bar{u}}})_{(d-p_1,P)}(a,b),(\overline{f_{\bar{v}}})_{(d-p_2,P)}(a,b)) \rangle  \tau^{d-1}+h.o.t.,\label{hermieq3}  
\end{align}
Recall from \cite[Lemma 3.5]{AraujoBodeSanchez} that for all $x\in\{u,\bar{u},v,\bar{v}\}$ we have either $f_x=0$ or $d(P;f_x)=d-p_i$, where $i=1$ if $x\in\{u,\bar{u}\}$ and $i=2$ if $x\in\{v,\bar{v}\}$. We may thus calculate the coefficient of $\tau^{d-1}$ in $\left(\left \langle \frac{d z(\tau)}{d\tau},\overline{df}(z(\tau))\right \rangle+ \left \langle \frac{d \bar{z}(\tau)}{d\tau},\overline{\bar{d}f}(z(\tau))\right \rangle\right)$ and get either zero or $q\alpha$ (in which case $d=q$).

Note that 
 \begin{align}
 &\langle (p_1a,p_2b), ((\overline{f_u})_{(d-p_1,P)}(a,b),(\overline{f_v})_{(d-p_2,P)}(a,b))\rangle \nonumber\\
 &+ \langle (p_1\bar{a},p_2\bar{b}), (\overline{f_{\bar{u}}}_{(d-p_1,P)}(a,b),(\overline{f_{\bar{v}}})_{(d-p_2,P)}(a,b))\rangle\nonumber\\
 =&  (-\rmi \bar{\alpha})^{-1}\left(  \langle (p_1a,p_2b), (\rmi \alpha (\overline{f_u})_{(d-p_1,P)}(a,b),\rmi \alpha (\overline{f_v})_{(d-p_2,P)}(a,b))\rangle\right.\nonumber\\
 &\left.+ \langle (p_1\bar{a},p_2\bar{b}), (\rmi\alpha\overline{f_{\bar{u}}}_{(d-p_1,P)}(a,b),\rmi \alpha (\overline{f_{\bar{v}}})_{(d-p_2,P)}(a,b))\rangle \right)\label{thm41eq2}
\end{align}
and
\begin{align}\label{thm41eq1}
   &\operatorname{Re} \left( \langle (p_1a,p_2b), (\rmi \alpha (\overline{f_u})_{(d-p_1,P)}(a,b),\rmi \alpha (\overline{f_v})_{(d-p_2,P)}(a,b))\rangle\right.\nonumber\\
   &\left.+  \langle (p_1\bar{a},p_2\bar{b}), (\rmi\alpha\overline{f_{\bar{u}}}_{(d-p_1,P)}(a,b),\rmi \alpha (\overline{f_{\bar{v}}})_{(d-p_2,P)}(a,b))\rangle  \right)\nonumber\\
   =& \operatorname{Re} \left( \langle (p_1a,p_2b), (\rmi \alpha (\overline{f_u})_{(d-p_1,P)}(a,b),\rmi \alpha (\overline{f_v})_{(d-p_2,P)}(a,b))\rangle \right)\nonumber\\
   &+ \operatorname{Re} \left( \langle (p_1\bar{a},p_2\bar{b}), (\rmi\alpha\overline{f_{\bar{u}}}_{(d-p_1,P)}(a,b),\rmi \alpha (\overline{f_{\bar{v}}})_{(d-p_2,P)}(a,b))\rangle  \right)\nonumber\\
   =&  \operatorname{Re} \left( \langle (p_1a,p_2b), (\rmi \alpha (\overline{f_u})_{(d-p_1,P)}(a,b),\rmi \alpha (\overline{f_v})_{(d-p_2,P)}(a,b))\rangle \right)\nonumber\\
   &+ \operatorname{Re} \left( \langle (p_1a,p_2b), (-\rmi\bar{\alpha}(f_{\bar{u}})_{(d-p_1,P)}(a,b),-\rmi \bar{\alpha} (f_{\bar{v}})_{(d-p_2,P)}(a,b))\rangle  \right)\nonumber\\
   =&   \operatorname{Re} \left( \langle (p_1a,p_2b), (\rmi \alpha (\overline{f_u})_{(d-p_1,P)}(a,b)-\rmi\bar{\alpha}(f_{\bar{u}})_{(d-p_1,P)}(a,b),\right.\nonumber\\
   &\left.\rmi \alpha  (\overline{f_v})_{(d-p_2,P)}(a,b)-\rmi \bar{\alpha}(f_{\bar{v}})_{(d-p_2,P)}(a,b) )\rangle  \right)\nonumber\\
   =&  \operatorname{Re} \left( \langle (p_1a,p_2b), (w_1(a,b), w_2(a,b)) \rangle \right). 
\end{align}

Suppose that $w_1(a,b)$ and $w_2(a,b)$ are not both equal to 0.
Recall that $w_1(a,b)$ and $w_2(a,b)$ are equal to $\lambda_0a$ and $\lambda_0b$, respectively, if they are non-zero. From this we obtain that $\langle (p_1a,p_2b), (w_1(a,b), w_2(a,b)) \rangle$ is equal to $\lambda_0|a|^2p_1$, $\lambda_0|b|^2p_2$ or $\lambda_0(|a|^2p_1+|b|^2p_2)$, depending on which of $w_1(a,b)$ and $w_2(a,b)$ vanishes. In either case, we obtain a non-zero real number if $w_1(a,b)\neq 0$ or $w_2(a,b)\neq 0$, i.e., 
\begin{equation}\label{eq:ineqw1}
\operatorname{Re} \left( \langle (p_1a,p_2b), (w_1(a,b),w_2(a,b)) \rangle \right)\neq 0,
\end{equation} which implies that $d=q$ and
\begin{align}
&\langle (p_1a,p_2b), \left((\overline{f_u})_{(d-p_1,P)}(a,b),(\overline{f_v})_{(d-p_2,P)}(a,b))\rangle\right. \nonumber\\
&\left.+ \langle (p_1\bar{a},p_2\bar{b}), (\overline{f_{\bar{u}}}_{(d-p_1,P)}(a,b),(\overline{f_{\bar{v}}})_{(d-p_2,P)}(a,b)\right)\rangle=q\alpha.
\end{align}
Then Eq.~\eqref{thm41eq2} implies that
\begin{align}
&\langle (p_1a,p_2b), \left(\rmi \alpha (\overline{f_u})_{(d-p_1,P)}(a,b),\rmi \alpha (\overline{f_v})_{(d-p_2,P)}(a,b))\rangle\right.\nonumber\\
&\left.+ \langle (p_1\bar{a},p_2\bar{b}), (\rmi\alpha\overline{f_{\bar{u}}}_{(d-p_1,P)}(a,b),\rmi \alpha (\overline{f_{\bar{v}}})_{(d-p_2,P)}(a,b)\right)\rangle =-\rmi q|\alpha|^2.
\end{align}


Taking the real part on both sides, we get an obvious contradiction from Eqs.~\eqref{thm41eq1} and \eqref{eq:ineqw1}:
\begin{equation}\label{contradict1}
0=\operatorname{Re} \left( \langle (p_1a,p_2b), (w_1(a,b),w_2(a,b)) \rangle \right)\neq 0.
\end{equation}

This proves the claim that $w_1(a,b)$ and $w_2(a,b)$ are both 0.


If $k_1\geq \tfrac{p_1}{p_2}\geq k_N$, then $\Delta(P)$ is not an extreme vertex. Then by Lemma~\ref{lem:w} $(a,b)\in(\C^*)^2$ is a critical point of $f_P$, contradicting SIND-(ii).

If $\tfrac{p_1}{p_2}>k_1$ and $f$ is not $v$-convenient, then $f_P=f_{\Delta_1}$ is SND by Lemma~\ref{lem:extremeSND}. Again, $(a,b)\in(\C^*)^2$ is a critical point of $f_P$, which is a contradiction to $f_P$ being strongly non-degenerate. The case of $\tfrac{p_1}{p_2}<k_N$ and $f$ not $u$-convenient is analogous.

If $\tfrac{p_1}{p_2}>k_1$, $f$ is $v$-convenient and $(1,n)\notin\Delta(P_1)\cap\text{supp}(f)$ for all $n\in\mathbb{N}_0$, then by Lemma~\ref{lem:conv} $(0,b)$ is a critical point of $f_{P_1}$, contradicting SIND-(i). The analogous statement holds if $\tfrac{p_1}{p_2}<k_N$, $f$ is $u$-convenient and $(n,1)\notin\Delta(P_N)\cap\text{supp}(f)$ for all $n\in\mathbb{N}_0$.

We are left with one last case (within Case 1), which is that $\tfrac{p_1}{p_2}>k_1$, $f$ is $v$-convenient and there is an $n\in\mathbb{N}_0$ with $(1,n)\in\Delta(P_1)\cap\text{supp}(f)$ (and the analogous case where $\tfrac{p_1}{p_2}<k_N$, $f$ is $u$-convenient and $(n,1)\in\Delta(P_N)\cap\text{supp}(f)$ for some $n\in\mathbb{N}_0$).

Since $\tfrac{p_1}{p_2}>k_1$, we know that $f_P=f_{\Delta_1}$ and it depends on $v$ or $\bar{v}$, so that $\widetilde{w_2}=w_2$ and therefore $\widetilde{w_2}(a,b)=w_2(a,b)=0$ by Eqs.~\eqref{thm41eq1}-\eqref{contradict1}. Now if $\widetilde{w_1}(a,b)$ is 0 as well, then Lemma~\ref{lem:conv2} implies that $(0,b)$ is a critical point of $f_{P_1}$, contradicting SIND-(i). Thus $\widetilde{w_1}(a,b)\neq 0$. Note that in this case $\widetilde{w_1}(a,b)$ is the coefficient of $\tau^{q+d_1}$ in $\rmi(f \overline{f_u} - \bar{f}f_{\bar{u}})(z(\tau))$ and by Lemma~\ref{lem:lowestorder} it is the lowest order coefficient. Therefore, Assumption (ii) implies that $q+d_1=s+p_1$.

On the other hand, $\widetilde{w_2}(a,b)$, the coefficient of $\tau^{q+d_2}$ of the second coordinate of the left hand side in Assumption (ii), vanishes. This means (again by Lemma~\ref{lem:lowestorder}) that the lowest order of $\tau$ in the second coordinate of the right hand side is strictly greater than $q+d_2$. Comparing with the lowest orders of the right hand side, we obtain
\begin{align}\label{eq:degs}
q+d_1&=s+p_1\nonumber\\
q+d_2&<s+p_2.
\end{align} 

Now note that if there is an $n\in\mathbb{N}_0$ with $(1,n)\in \Delta(P_1)\cap\text{supp}(f)$, then $k_1\geq 1$. Otherwise, the intersection between $\Delta(P_1)$ and the vertical line through $(1,n)$ would not be an integer lattice point. Since $f$ is $v$-convenient, there is an intersection between $\Delta(P_1)$ and the vertical axis. Denote this intersection point by $(0,n_0)$ with $n_0\in\mathbb{N}$. Then $n=n_0-k_1$ and since $k_1\geq 1$, we have $n_0-n\geq 1$. Note that $d_2=(n_0-1)p_2$ and $d_1=np_2$. It follows from Eq.~\eqref{eq:degs} that
\begin{align}
q+np_2=s+p_1,\\
q+(n_0-1)p_2&<s+p_2
\end{align}
and therefore
\begin{align}
&q+(n_0-1)p_2<q+np_2-p_1+_2\nonumber\\
\implies &(n_0-n-1)p_2<p_2-p_1\nonumber\\
\implies &0\leq n_0-n-1<1-\frac{p_1}{p_2}<1-k_1<0,
\end{align}
which is a contradiction and concludes our discussion of Case 1.\\

\noindent
\underline{Case 2: $u(\tau)\equiv 0$ or $v(\tau)\equiv 0$}\\
We discuss the case of $u(\tau)\equiv 0$. The case of $v(\tau)\equiv 0$ is analogous.

Since $u(\tau)$ and $v(\tau)$ cannot both be constant 0, we have $v(\tau)\not\equiv0$. It is thus given by a Taylor series $v(\tau)=b\tau^{p_2}+h.o.t.$ with $b\in\C^*$. We may now pick any positive weight vector $P$ of the form $P=(p_1,p_2)$ with $\tfrac{p_1}{p_2}>k_1$, that is, we are free to choose $p_1$ sufficiently large, while $p_2$ is determined by the Taylor series of $v(\tau)$. Since $f(z(\tau))\neq 0$ for $\tau>0$ by Assumption (i) and $u(\tau)\equiv 0$, it follows that $f$ is $v$-convenient. Otherwise, $f(0,v)=0$ for all $v\in \mathbb{C}$, contradicting Assumption (i). This implies that as in Case 1, $\widetilde{w_1}(0,b)$ and $\widetilde{w_2}(0,b)$ cannot both be 0. Otherwise, $(0,b)$ would be a critical point of $f_{P_1}$ by Lemma~\ref{lem:conv2}, contradicting SIND-(i). Since $\widetilde{w_1}(0,b)$ is the coefficient of $\tau^{q+d_1}$ in the first coordinate of the left hand side of the equation in Assumption (ii), it must vanish, since the first coordinate of the right hand side is constant 0. Since $\widetilde{w_1}(0,b)=0$, we must therefore have $\widetilde{w_2}(0,b)\neq 0$. Since $\tfrac{p_1}{p_2}>k_1$, $f_P=f_{\Delta_1}$ and it depends on $v$ or $\bar{v}$, which implies that $\widetilde{w_2}=w_2$ and thus $w_2(0,b)\neq 0$. But then the same calculation as in Case 1 results in
\begin{equation}
0=\operatorname{Re} \left( \langle (p_10,p_2b), (w_1(0,b),w_2(0,b)) \rangle \right)=\lambda_0|b|^2p_2\neq 0.
\end{equation}

We thus obtain a contradiction in both cases, which proves that $f$ satisfies the strong Milnor condition.
\end{proof}


Recall from Corollary~\ref{radialequivweak} that for a radially weighted homogeneous mixed polynomial $f$, the presence of a weakly isolated singularity and the properties PND and IND are all equivalent, and also in the strong sense, i.e., an isolated singularity, SPND and SIND are all equivalent. By Theorem~\ref{SMCteo} and Corollary~\ref{radialequivweak}, a radially weighted homogeneous mixed polynomial $f$ with isolated singularity satisfies $\Sigma_{\arg(f)}=M_{\arg (f)}=\emptyset$. (Note that for radially weighted homogeneous polynomials $M_{\arg (f)}=\emptyset$ and $\Sigma_{\arg(f)}=\emptyset$ if and only if $U\cap M_{\arg(f)}=\emptyset$ and $U\cap \Sigma_{\arg(f)}=\emptyset$, respectively, for some open neighbourhood $U$ of the origin.) It is known that if $f$ is also semiholomorphic, then $\Sigma_f \setminus V_f = \Sigma_{\arg(f)}$. This implies that for semiholomorphic radially weighted homogeneous polynomials, under the assumption of a weakly isolated singularity, the strong Milnor condition is equivalent to the existence of an isolated singularity at the origin. We will see now that this equivalence is also true for all radially weighted homogeneous mixed polynomials, not only semiholomorphic ones.
\begin{prop}\label{SMCprop}
Let $f$ be a radially weighted homogeneous mixed polynomial with weakly isolated singularity. Then $M_{\arg{f}}=\Sigma_{\arg f}=\Sigma_f \setminus \{0\}$. Moreover, $f$ satisfies the strong Milnor condition if and only if $f$ has an isolated singularity at the origin.   
\end{prop}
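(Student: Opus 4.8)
Throughout write $z=(u,v)$ and
$V(z):=\rmi\bigl(f\overline{df}-\bar f\,\bar{d}f\bigr)(z)=\rmi\bigl(f\overline{f_u}-\bar f f_{\bar u},\ f\overline{f_v}-\bar f f_{\bar v}\bigr)(z)$,
so that $M_{\arg f}=\{z\in\C^2\setminus V_f:\exists\lambda\in\R,\ V(z)=\lambda z\}$. A short computation shows that on $\C^2\setminus V_f$ the vector $V$ is a positive multiple of the Euclidean gradient of $\arg f$ (with respect to $\C^2=\R^4$), so that the critical set $\Sigma_{\arg f}$ of $\arg f$ coincides with $\{z\in\C^2\setminus V_f:V(z)=0\}$ and, trivially, $\Sigma_{\arg f}\subseteq M_{\arg f}$. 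Since $f$ is radially weighted homogeneous of some radial degree $d\geq1$ for a positive weight vector $P=(p_1,p_2)$, differentiating $f(\rho^{p_1}u,\rho^{p_2}v,\overline{\rho^{p_1}u},\overline{\rho^{p_2}v})=\rho^{d}f(z,\bar z)$ at $\rho=1$ yields the Euler identity
\begin{equation}\label{eq:eulerprop}
p_1 u f_u+p_1\bar u f_{\bar u}+p_2 v f_v+p_2\bar v f_{\bar v}=d\,f ,
\end{equation}
which will be the main tool.

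I first record two preliminary facts. The same scaling gives $f_x(\rho\cdot z)=\rho^{d-p_i}f_x(z)$ for $x\in\{u,\bar u\}$ (with $i=1$) and $x\in\{v,\bar v\}$ (with $i=2$), hence each $s_{j,f}(\rho\cdot z)$ is a positive power of $\rho$ times $s_{j,f}(z)$; therefore $\Sigma_f$, like $V_f$, is invariant under the weighted $\R_{>0}$-action $z\mapsto(\rho^{p_1}u,\rho^{p_2}v)$. In particular a nonzero point of $\Sigma_f\cap V_f$ would lie on a whole ray in $\Sigma_f\cap V_f$ accumulating at the origin, contradicting the weakly isolated hypothesis, so $\Sigma_f\cap V_f=\{0\}$ and $\Sigma_f\setminus\{0\}\subseteq\C^2\setminus V_f$. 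Secondly, for $z\notin V_f$ one has the elementary descriptions: $z\in\Sigma_f$ if and only if $f_u(z)=f_v(z)=f_{\bar u}(z)=f_{\bar v}(z)=0$, or there is $\nu\in S^1$ with $\overline{f_{\bar u}}(z)=\nu f_u(z)$ and $\overline{f_{\bar v}}(z)=\nu f_v(z)$ (read off from $s_{2,f}=s_{3,f}=0$, giving equality of moduli, and $s_{1,f}=0$, matching the phases); while $V(z)=0$ if and only if $f\overline{f_u}=\bar f f_{\bar u}$ and $f\overline{f_v}=\bar f f_{\bar v}$ at $z$, i.e.\ the previous relations hold with $\nu=\bar f(z)/f(z)$, in which case $s_{1,f}(z)=s_{2,f}(z)=s_{3,f}(z)=0$.

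The core of the proof is the chain $M_{\arg f}\subseteq\Sigma_{\arg f}\subseteq\Sigma_f\setminus\{0\}\subseteq M_{\arg f}$. For the first inclusion, let $z\notin V_f$ with $V(z)=\lambda z$, $\lambda\in\R$. Take the Hermitian inner product with $(p_1 u,p_2 v)$ and set $C:=p_1 u f_u+p_2 v f_v$, $D:=p_1\bar u f_{\bar u}+p_2\bar v f_{\bar v}$; a direct computation gives $\langle V(z),(p_1 u,p_2 v)\rangle=\rmi\bigl(f\bar C-\bar f D\bigr)$, and substituting $D=d f-C$ from \eqref{eq:eulerprop} turns the right-hand side into $\rmi\bigl(2\re(f\bar C)-d|f|^2\bigr)$, which is purely imaginary. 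On the other hand this inner product equals $\lambda\bigl(p_1|u|^2+p_2|v|^2\bigr)$, which is real and, since $p_1,p_2>0$ and $z\neq0$, nonzero unless $\lambda=0$. Hence $\lambda=0$, $V(z)=0$, and $z\in\Sigma_{\arg f}$. The inclusion $\Sigma_{\arg f}\subseteq\Sigma_f\setminus\{0\}$ is immediate from the second preliminary fact, since $V(z)=0$ kills $s_{1,f},s_{2,f},s_{3,f}$ at $z$ and $z\neq0$ because $z\notin V_f$. For the last inclusion take $z\in\Sigma_f$, $z\neq0$; by the first fact $f(z)\neq0$, and by \eqref{eq:eulerprop} the four derivatives cannot all vanish at $z$ (that would force $d f(z)=0$), so a $\nu\in S^1$ as above exists. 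Plugging $f_{\bar u}=\bar\nu\overline{f_u}$, $f_{\bar v}=\bar\nu\overline{f_v}$ into \eqref{eq:eulerprop} gives $C+\bar\nu\bar C=d f$; conjugating this equation, multiplying the original by $\nu$, and comparing the two yields $\nu f=\bar f$, i.e.\ $\nu=\bar f/f$, whence $V(z)=0$ and $z\in M_{\arg f}$. This proves $M_{\arg f}=\Sigma_{\arg f}=\Sigma_f\setminus\{0\}$.

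For the last statement, since $f$ has a weakly isolated singularity, \cite[Theorem~2.2]{AraujoTibar} gives that $f$ satisfies the strong Milnor condition if and only if $U\cap M_{\arg f}=\emptyset$ for all sufficiently small neighbourhoods $U$ of the origin. As $M_{\arg f}=\Sigma_f\setminus\{0\}$ is invariant under the weighted $\R_{>0}$-action, it meets every neighbourhood of the origin as soon as it is nonempty, so this is equivalent to $\Sigma_f\setminus\{0\}=\emptyset$, i.e.\ (as $0\in\Sigma_f$ by hypothesis) to $\Sigma_f=\{0\}$, which is precisely the existence of an isolated singularity at the origin. I expect the main obstacle to be the inner-product computation showing that $\langle V(z),(p_1u,p_2v)\rangle$ becomes purely imaginary after using \eqref{eq:eulerprop}, together with the parallel phase-upgrade $\nu\in S^1\rightsquigarrow\nu=\bar f/f$ in the last inclusion; the cone arguments, the elementary descriptions of $\Sigma_f$ and $\Sigma_{\arg f}$, and the reduction of the final statement to \cite[Theorem~2.2]{AraujoTibar} are routine.
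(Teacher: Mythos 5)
Your proof is correct, and it takes a genuinely different route from the paper's. The paper's argument for $\Sigma_f\setminus V_f\subseteq\Sigma_{\arg f}$ is geometric: it uses the infinitesimal generator $T=(p_1\mathrm{Re}(u),p_1\mathrm{Im}(u),p_2\mathrm{Re}(v),p_2\mathrm{Im}(v))$ of the weighted $\R_{>0}$-action to build a basis of $T_p\R^4$ adapted to the level sets of $|f|$ and $\arg f$, and reads off from the shape of the real Jacobian that a regular point of $\arg f$ outside $V_f$ must be a regular point of $f$; the equality $M_{\arg f}=\Sigma_{\arg f}$ is obtained by citation rather than reproved. You instead make the weighted symmetry enter through the Euler identity $p_1 u f_u+p_1\bar u f_{\bar u}+p_2 v f_v+p_2\bar v f_{\bar v}=d\,f$, and prove the cyclic chain $M_{\arg f}\subseteq\Sigma_{\arg f}\subseteq\Sigma_f\setminus\{0\}\subseteq M_{\arg f}$ purely algebraically: the Hermitian pairing of $V(z)=\lambda z$ with $(p_1u,p_2v)$ plus the Euler identity forces $\lambda=0$; the description of $\Sigma_f$ off $V_f$ by a unimodular $\nu$ together with the Euler identity upgrades $\nu$ to $\bar f/f$, which is exactly $V(z)=0$. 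This is more self-contained (it reproves $M_{\arg f}=\Sigma_{\arg f}$ in the radially weighted homogeneous case instead of invoking an external result) and arguably cleaner to verify, at the modest cost of some bookkeeping with the quantities $C,D$; the paper's version makes the geometric role of the radial direction $T$ more visible. Both proofs reduce the final biconditional to \cite[Theorem~2.2]{AraujoTibar} together with the cone property of $M_{\arg f}$ in the same way.
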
   
\begin{proof}
From Theorem~\ref{strong-isolated3} we have that $M_{\arg{f}}=\Sigma_{\arg f}$.  The fact that $f$ has a weakly isolated singularity implies that $\Sigma_f \setminus V_f = \Sigma_f \setminus\{0\}$.  Thus it suffices to prove that $\Sigma_f\setminus \{0\}=\Sigma_{\arg f}$. From a direct calculation using the definitions of both sets we have that $\Sigma_{\arg f} \subseteq \Sigma_f$. Consider the real mapping $(|f|,\arg f)$, and let $p=(u_*,v_*)$ a regular point of $\arg f$ . Then there exist 3 directions (linear independent) $w_1,w_2, \text{ and } w_3 \in T_p \R^4$ such that $\frac{\partial \arg f }{dw_i}=0, \ i=1,2,3$, and some direction $w_4$ such that $\frac{\partial \arg f }{dw_4}(p)\neq 0$.  On the other hand, note that for $T=(p_1\text{Re}(u),p_1\text{Im}(u),p_2 \text{Re}(v),p_2\text{Im}(v))\in \mathbb{R}^4$, where $P=(p_1,p_2)$ is the radial weight type of $f$, we have by the radial action that $T\cdot\nabla |f|(p)\neq 0$ and also that $T\cdot\nabla \arg f(p)= 0$. It follows that $T$, interpreted as an element of $T_p\mathbb{R}^4$, $p=(\text{Re}(u),\text{Im}(u),\text{Re}(v),\text{Im}(v))$, is in the span of $w_1,w_2$ and $w_3$ and we may (after redefining $w_1$ and $w_2$) assume that $w_1, w_2$ and $T$ span the tangent space of the level set of $\arg(f)$ at $p$. Therefore, the real Jacobian in the basis $(w_1,w_2,T,w_4)$ is 
\begin{align}
J f (p)=
\begin{pmatrix}
\small
 \frac{\partial |f| }{d w_1}(p)  &  \frac{\partial |f| }{d w_2}(p)  & \frac{\partial |f| }{d T}(p)\neq 0  &  \frac{\partial |f| }{d w_4}(p)  \\ \\
0 &0& \frac{\partial \arg f }{d T}(p)=0 &  \frac{\partial \arg f }{d w_4}(p)\neq 0 
\end{pmatrix}
\end{align}
Thus, $p$ is a regular point of $f$, which implies $\Sigma_f \setminus V_f \subseteq \Sigma_{\arg f}$. Hence, under the assumption that $f$ has a weakly isolated singularity, so that $\Sigma_f\cap V_f=\{0\}$, we have $M_{\arg f }=\emptyset$ if and only if $\Sigma_f = \{0\}$. Since the strong Milnor condition is equivalent to $M_{\arg f }=\emptyset$  in the case of weakly isolated singularity, we have proved the proposition. 
\end{proof}
A different proof of $M_{\arg{f}}=\Sigma_{\arg f}=\Sigma_f \setminus \{0\}$ for any radially weighed homogeneous mixed polynomial $f$ can be found in \cite{Chen2014}, where the author uses a different technique. Together with Corollary~\ref{radialequivweak} the proposition proves Theorem~\ref{teo:rad}. 


  
  \bibliographystyle{amsplain}
  \renewcommand{\refname}{ \large R\normalsize  EFERENCES}
\bibliography{sample-2}
\Addresses
\end{document}